\documentclass[10pt,twoside,english]{amsart}
%
%
\normalsize
%
%
\advance\oddsidemargin by -1.0cm
\advance\evensidemargin by -1.0cm
\textwidth=147mm
\textheight=220mm
\advance\topmargin by -1.0cm 
\setlength{\parindent}{0in}
%
%
\usepackage{amssymb}
\usepackage{babel}
\usepackage{amsmath}
\usepackage{amscd}   
\usepackage{epsfig}
\usepackage{rotating}
\usepackage{arydshln}
\usepackage{float}
\usepackage{enumerate}
\let\mathg\mathfrak
\theoremstyle{plain}
\newtheorem{cor}{Corollary}[section]
\newtheorem{lem}[cor]{Lemma}
\newtheorem{thm}[cor]{Theorem}
\newtheorem{prop}[cor]{Proposition}

\theoremstyle{definition}
\newtheorem{exa}[cor]{Example}
\newtheorem{NB}[cor]{Remark}
\newtheorem{dfn}[cor]{Definition}

\numberwithin{equation}{section}
%
%
%

\newcommand{\bdm}{\begin{displaymath}}
\newcommand{\edm}{\end{displaymath}}
\newcommand{\be}{\begin{equation}}
\newcommand{\ee}{\end{equation}}
\newcommand{\ba}[1]{\begin{array}{#1}}
\newcommand{\ea}{\end{array}}
\newcommand{\bea}[1][]{\begin{eqnarray#1}}
\newcommand{\eea}[1][]{\end{eqnarray#1}}
\newcommand{\btab}{\begin{tabular}}
\newcommand{\etab}{\end{tabular}}

\usepackage{rotating}

\newcommand{\x}{\times}
\newcommand{\op}{\oplus}
\newcommand{\ox}{\otimes}

\newcommand{\ra}{\rightarrow}
\newcommand{\lra}{\longrightarrow}

\newcommand{\rk}{\ensuremath{\mathrm{rk}\,}}

\newcommand{\Id}{\ensuremath{\mathrm{Id}}}
\newcommand{\tr}{\ensuremath{\mathrm{tr}}}

\newcommand{\ad}{\ensuremath{\mathrm{ad}}}

\newcommand{\cyclic}[1]{\stackrel{{\scriptsize #1}}{\mathfrak{S}}}
\newcommand{\Hol}{\ensuremath{\mathrm{Hol}}}
\newcommand{\C}{\ensuremath{\mathbb{C}}}

\newcommand{\R}{\ensuremath{\mathbb{R}}}

\newcommand{\Q}{\ensuremath{\mathbb{Q}}}

\newcommand{\CP}{\ensuremath{\mathbb{CP}}}

 
\newcommand{\vphi}{\ensuremath{\varphi}}    
\newcommand{\vrho}{\ensuremath{\varrho}}

\newcommand{\kr}{\ensuremath{\mathcal{R}}}
\newcommand{\Ric}{\ensuremath{\mathrm{Ric}}}

\newcommand{\Lin}{\ensuremath{\mathrm{span}}}

\newcommand{\Ad}{\ensuremath{\mathrm{Ad}\,}}

\renewcommand{\ad}{\ensuremath{\mathrm{ad}\,}}

\newcommand{\diag}{\ensuremath{\mathrm{diag}}}

\newcommand{\Cl}{\ensuremath{\mathcal{C}}}

\newcommand{\SL}{\ensuremath{\mathrm{SL}}}

\newcommand{\un}{\ensuremath{\mathg{u}}}
\newcommand{\su}{\ensuremath{\mathg{su}}}
\newcommand{\SU}{\ensuremath{\mathrm{SU}}}
\newcommand{\U}{\ensuremath{\mathrm{U}}}

\newcommand{\so}{\ensuremath{\mathg{so}}}
\renewcommand{\sl}{\ensuremath{\mathg{sl}}}
\newcommand{\SO}{\ensuremath{\mathrm{SO}}}

\newcommand{\Spin}{\ensuremath{\mathrm{Spin}}}

\newcommand{\Orth}{\ensuremath{\mathrm{O}}}

\renewcommand{\k}{\ensuremath{\mathfrak{k}}}
\newcommand{\g}{\ensuremath{\mathfrak{g}}}
\newcommand{\h}{\ensuremath{\mathfrak{h}}}
\newcommand{\hol}{\ensuremath{\mathfrak{hol}}}
\newcommand{\m}{\ensuremath{\mathfrak{m}}}

\newcommand{\Iso}{\ensuremath{\mathrm{Iso}}}
\newcommand{\iso}{\ensuremath{\mathfrak{iso}}}
\renewcommand{\Re}{\ensuremath{\mathrm{Re}\,}}
%

%
\begin{document}
\def\haken{\mathbin{\hbox to 6pt{%
                 \vrule height0.4pt width5pt depth0pt
                 \kern-.4pt
                 \vrule height6pt width0.4pt depth0pt\hss}}}
    \let \hook\intprod
\setcounter{equation}{0}
%
%
\thispagestyle{empty}
%
\date{\today}
\title[The classification of naturally reductive 
spaces in dimensions $n \leq 6$]{The classification of 
naturally reductive homogeneous spaces in dimensions $n \leq 6$}
%
%
%
\author{Ilka Agricola}
\author{Ana Cristina Ferreira}
\author{Thomas Friedrich}
\address{\hspace{-5mm} 
Ilka Agricola,  
Fachbereich Mathematik und Informatik, 
Philipps-Universit\"at Marburg,
Hans-Meerwein-Stra\ss{}e,
D-35032 Marburg, Germany,
{\normalfont\ttfamily agricola@mathematik.uni-marburg.de}}
\address{\hspace{-5mm} 
Ana Cristina Ferreira,
Centro de Matem\'atica,
Universidade de Minho,
Campus de Gualtar,
4710-057 Braga, Portugal, 
{\normalfont\ttfamily anaferreira@math.uminho.pt}}
\address{\hspace{-5mm} 
Thomas Friedrich,
Institut f\"ur Mathematik,
Humboldt-Universit\"at zu Berlin,
Sitz: WBC Adlershof,
D-10099 Berlin, Germany,
{\normalfont\ttfamily friedric@mathematik.hu-berlin.de}}
%
\begin{abstract}
We present a new method for classifying naturally reductive homogeneous
spaces -- i.\,e.~homogeneous Riemannian manifolds admitting a metric connection
with skew torsion that has parallel torsion \emph{and} curvature.
This method is based on a deeper understanding of the holonomy algebra of
connections with parallel skew torsion on Riemannian manifolds  and the 
interplay of such a connection with the geometric structure on the given 
Riemannian manifold. It allows to reproduce by easier arguments 
the known classifications in dimensions $3,4$, and $5$, and yields
as a new result
the classification in dimension $6$. In each dimension, one obtains a 
`hierarchy' of degeneracy for the torsion form, which we then treat case 
by case. For the completely degenerate cases, we obtain results that
are independent of the dimension. In some situations, we are able to prove 
that any Riemannian manifold with parallel skew torsion has to be 
naturally reductive.  We show that a `generic'
parallel torsion form defines a quasi-Sasakian structure in dimension $5$
and an almost complex structure in dimension $6$. 
\end{abstract}
\maketitle
\pagestyle{headings}
%
%
%
\section{Introduction and summary}
%
The classification of Riemannian symmetric spaces by \'Elie Cartan in 1926
was one of his major contributions to mathematics, as it linked
in a unique way the algebraic theory of  Lie groups and the geometric
notions of  curvature, isometry, and holonomy. In contrast,
a classification of all Riemannian manifolds that are only homogeneous is,
without further assumptions, genuinely impossible. Substantial work
has been devoted to certain situations that are of particular interest;
for example, isotropy-irreducible homogeneous Riemannian spaces, 
homogeneous Riemannian spaces with positive curvature, and compact homogeneous
spaces of (very) small dimension have been investigated and classified.
The present paper is devoted to one such class of homogeneous
Riemannian manifolds---\emph{naturally reductive (homogeneous) spaces}. 
Traditionally, they are defined as  Riemannian manifolds $M=G/K$ with 
a transitive action of a Lie subgroup $G$ of the isometry group and
with a reductive complement $\m$ of the Lie subalgebra $\k$ inside
the Lie algebra $\g$ such that
\be\label{eq-cond-nat-red}
\langle [X,Y]_\m, Z\rangle + \langle Y, [X,Z]_\m\rangle\ =\ 0 \ 
\text{ for all } X,Y,Z\in\m.
\ee
Here,
 $\langle-,-\rangle$ denotes the inner product on $\m$ induced from 
the Riemannian metric $g$. 
Classical examples of naturally reductive homogeneous spaces
include: irreducible symmetric spaces, isotropy-irreducible 
homogeneous manifolds, Lie groups with a bi-invariant metric, and
Riemannian 3-symmetric spaces. The memoir 
\cite{DAtri&Z79} is devoted to the construction and, under some assumptions,
the classification of left-invariant naturally reductive
metrics on compact Lie groups.
Unfortunately, it is not always easy to decide whether a given homogeneous
Riemannian space is naturally reductive, since one has to consider
\emph{all} possible transitive groups of isometries.

Our study of naturally reductive spaces uses an alternative
description, and while doing so, enables us to obtain some general
structure results for them and another interesting larger class of manifolds,
namely \emph{Riemannian manifolds with parallel skew torsion}. 
As is well known, condition (\ref{eq-cond-nat-red}) states that the canonical connection
$\nabla^c$ of $M=G/K$  (see \cite{Kobayashi&N2}) has skew torsion 
$T^c\in \Lambda^3(M)$ (see Section
\ref{conn-with-skew-torsion} for precise definitions), and  a classical
result of Ambrose and Singer allows to conclude that the torsion $T^c$
and the curvature $\kr^c$ of $\nabla^c$ are then $\nabla^c$-parallel
\cite{Ambrose&S58}. For this reason, we agree to call a Riemannian manifold
$(M,g)$ \emph{naturally reductive} if it is a homogeneous space $M=G/K$
endowed with a metric connection $\nabla$ with skew torsion $T$ such that its
torsion and curvature $\kr$ are $\nabla$-parallel, 
i.\,e.~$\nabla T=\nabla \kr=0$.
If $M$ is connected, complete, and simply connected, a result
of Tricerri states that the space is indeed naturally reductive in
the previous sense {\cite[Thm 6.1]{Tricerri&V1}}. As a general reference 
for the relation between these notions, we recommend \cite{Tricerri93} 
and \cite{Cleyton&S04}. Appendix \ref{app.SHS} explains that 
any manifold  $M$ that is neither covered by a sphere 
nor a Lie group carries at most one connection that makes it naturally
reductive \cite{OR12b}. Where possible, we will first investigate
Riemannian manifolds with \emph{parallel skew torsion}, i.\,e.~we do not
assume, a priori, homogeneity or parallel curvature. 
Recall that for a Riemannian manifold with $K$-structure, a characteristic
connection is a $K$-connection with skew torsion (see 
\cite{FI02}, \cite{Agricola06}, and \cite{Agricola&F&H13} for uniqueness).
There are several classes of manifolds with a $K$-structure
that are  known to 
admit \emph{parallel} characteristic torsion, for example nearly K\"ahler 
manifolds, Sasakian manifolds, or nearly parallel $G_2$-manifolds; these 
classes have been considerably enlarged in
more recent  work (see \cite{Vaisman79},  \cite{FI02}, \cite{Alexandrov06}, 
\cite{Alexandrov&F&S05}, \cite{Friedrich07}, \cite{Sch07}), eventually 
leading to a host of such manifolds that are not homogeneous.
Nevertheless,  we will be able to prove that for some instances of 
manifolds with parallel skew  torsion, natural reductivity 
(and in particular, homogeneity) follows automatically.

The outline of our work is as follows. We recall some facts about
metric connections with torsion, their curvature, and
their holonomy. We introduce the $4$-form $\sigma_T$ that captures many
geometric properties of the original torsion form $T$; among others, it
is a suitable measure for the `degeneracy' of $T$. We prove 
splitting theorems for Riemannian manifolds with parallel skew torsion
if $\sigma_T=0$ (the `completely degenerate' case) and if $\ker T\neq0$; 
these will be used many times in the sequel. In fact, we are able to
show that an \emph{irreducible} manifold with parallel skew torsion $T\neq 0$
with $\sigma_T=0$ and dimension $\geq 5$ is a Lie group.
Then we turn to our main goal, namely, the classification of
naturally reductive spaces, and, sometimes, even
of spaces with parallel skew torsion, in dimension $\leq 6$.
The classification of naturally reductive  spaces was
obtained previously in dimension $3$ by Tricerri and Vanhecke
\cite{Tricerri&V1}, and in dimensions $4$, $5$ by Kowalski and Vanhecke
\cite{KV83}, \cite{KV85}. Their approach relied on  
deriving a normal form for the curvature tensor, and the torsion then
followed. With that approach, a classification in higher dimensions
was impossible, and the geometric structure of the manifolds was
not very transparent. In contrast, our approach is by looking at the
parallel torsion as the fundamental entity, and the general philosophy is
that, for sufficiently `non-degenerate' torsion, the connection
defined by the torsion is in fact the characteristic connection
of a suitable  $K$-structure on $M$. We make intensive use of the
fact that a $3$-form that is parallel for the connection it defines
is far from being generic, and that its holonomy has to be quite
special too: this allows a serious simplification of the situation.
Furthermore, we reformulate the first Bianchi identity as a single
identity in the Clifford algebra, a formulation that is computationally 
more tractable than its classical version. This is explained, together 
with  the closely related Nomizu construction, in Appendix  \ref{app.nomizu}.
We begin by rederiving the classifications by our alternative method
in dimensions $3$ and $4$. This turns to out to be very efficient, in
particular in dimension $4$---there, any parallel torsion $T\neq 0$ defines 
a parallel vector field $*T$ that induces a local splitting of 
the $4$-dimensional space. This generalizes and explains the classical result
in a few lines. 

In dimension $5$, the normal form of $T$ depends on two
parameters, which induce a case distinction (either $\sigma_T=0$ or
$\sigma_T\neq 0$, with subcases 
 $\Iso(T)=\SO(2)\x\SO(2)$ and  $\Iso(T)=\U(2)$). While the 
first case is immediately dealt with by our splitting theorems, the other
two are quite distinct geometrically. We prove that a  $5$-manifold with
parallel skew torsion is quasi-Sasakian in the second case and 
$\alpha$-Sasakian in the third case, with Reeb vector field $*\sigma_T$ (up 
to a constant).
In the second case, any such manifold is automatically naturally 
reductive, whereas non-homogeneous Sasaki manifolds are counter-examples for
the analogous statement in the last case. We finish the 
discussion with the full description of all naturally
reductive  $5$-manifolds; as a special case, we recover the
classification from \cite{KV85}. 

Our approach yields a complete classification of naturally reductive spaces 
in dimension $6$. The crucial observation is
that $*\sigma_T$ is a $2$-form, i.\,e.~an antisymmetric endomorphism
that can have rank 0, 2, 4, or 6. 
The ideal case  $\rk(*\sigma_T)=6$ and with all eigenvalues equal 
would define a K\"ahler form and thus the connection would be
the characteristic connection of an almost Hermitian structure.
We prove that this is basically what happens: the degenerate
case $\rk(*\sigma_T)=0$    is again easy. A $6$-manifold with 
parallel skew torsion and
 $\rk(*\sigma_T)=2$ is automatically naturally reductive and can be
parametrized explicitly. In particular, it is a product of two $3$-dimensional
non-commutative Lie groups equipped with a family of left-invariant
metrics. The case  $\rk(*\sigma_T)=4$ cannot occur, and
in case $\rk(*\sigma_T)=6$, the eigenvalues have to be indeed equal.
The space is either a nearly K\"ahler manifold or a family of 
naturally reductive 
spaces that can be described explicitly. The generic examples are 
left-invariant metrics 
on the groups $\SL(2,\C)$ or $\Spin(4) = S^3 \times S^3$.

For reference our paper includes a detailed section that describes all
homogeneous spaces appearing in the classification that are neither
products nor degenerate; we expect this part to be as useful as the
classification theorems themselves.

\textbf{Acknowledgments.} Ilka Agricola and Ana Ferreira 
acknowledge financial support by the
DFG within the priority programme 1388 "Representation theory".
Ana Ferreira thanks Philipps-Universit\"at Marburg for its
hospitality during a research stay in May-July 2013, and she also 
acknowledges partial financial support by the FCT through the 
project PTDC/MAT/118682/2010 and the University of 
Minho through the FCT project PEst-C/MAT/UI0013/2011. We also thank
Andrew Swann (Aarhus) for discussions on Section $4$ during a research visit
to Marburg in June 2013 and Simon G.~Chiossi 
(Marburg) for many valuable comments on a preliminary version of this work.
%
\section{Metric connections with skew torsion}\label{conn-with-skew-torsion}
%
Consider a Riemannian manifold $(M^n, g)$. The difference between its 
Levi-Civita connection $\nabla^g$ and any linear 
connection $\nabla$ is  a $(2,1)$-tensor field $A$,
\bdm 
\nabla_X Y\ =\ \nabla^g_X Y + A(X,Y),\quad X,Y \in TM^n.
\edm
The curvature of $\nabla$ resp.~$\nabla^g$ will 
always be denoted by $\kr$ resp.~$\kr^g$.
Following Cartan, we study the algebraic 
types of the torsion tensor for a metric connection.
Denote by the same symbol the $(3,0)$-tensor
derived from a $(2,1)$-tensor  by contraction with the metric.
We identify $TM^n$ with $(TM^n)^*$ using $g$ from now on. 
Let $\mathcal{T}$ be the $n^2(n-1)/2$-dimensional space of all 
possible torsion tensors,
\bdm
\mathcal{T}\ =\ \{T\in\ox^3 TM^n \ | \ T(X,Y,Z)= - \, T(Y,X,Z) \}
\ \cong \ \Lambda^2(M^n)\ox TM^n \, .
\edm
A connection $\nabla$ is metric if and only if $A$ belongs to the space
\bdm
\mathcal{A}\ :=\ TM^n\ox\Lambda^2(M^n) \ = \ \{A \in\ox^3 TM^n \ | 
\ A(X,V,W) +  A(X,W,V) \ =\ 0\} \, .
\edm 
The spaces $\mathcal{T}$ and $\mathcal{A}$ are isomorphic
as $\Orth(n)$ representations,  reflecting
the fact that metric connections can be uniquely characterized by their
torsion.
For $n\geq 3$,  they split under the action of $\Orth(n)$
into the sum of three irreducible representations,
\bdm
\mathcal{T}\cong TM^n \op \Lambda^3(M^n) \op \mathcal{T}'.
\edm
The last module  is equivalent to the
Cartan product  of the representations  $TM^n$ and $\Lambda^2(M^n)$ 
(see \cite{Cartan25a}).
The eight classes of linear connections are now defined by the possible
components of their torsions $T$ in these spaces.
The main case of interest is the following:
\begin{dfn}
The connection $\nabla$ is said to have  \emph{skew-symmetric 
torsion} or just \emph{skew torsion} if its torsion tensor lies in  
the second component of the decomposition, i.\,e.~it is
given by a $3$-form,
\be\label{eq-norm-torsion}
\nabla_X Y \ = \ \nabla^g_X Y \, + \, \frac{1}{2} \, T(X,Y,-).
\ee 
Whenever we shall speak of a metric connection with skew torsion
$T\in\Lambda^3(M^n)$, the connection is defined with the normalization as in 
equation $(\ref{eq-norm-torsion})$.
By a \emph{Riemannian  manifold with  parallel skew torsion}, we mean
a Riemannian manifold $(M^n,g)$ equipped with a metric connection $\nabla$ 
with skew torsion $T$ such that $\nabla T=0$. 
For any $3$-form $T\in\Lambda^3(M^n)$, one can define a $4$-form through
\bdm
\sigma_T \ :=\ \frac{1}{2} \sum_{i=1}^n (e_i\haken T)\wedge (e_i\haken T),  \
\mbox{or equivalently} \ \sigma_T(X,Y,Z,V) \ =\ \cyclic{X,Y,Z}
g(T(X,Y), T(Z,V)),
\edm
where $e_1,\ldots,e_n$ is a local orthonormal frame.  
It clearly defines a covariant from $\Lambda^3(\R^n)$ to 
$\Lambda^4(\R^n)$ for the
standard $\SO(n)$-action. Furthermore, this $4$-form is      an 
important measure for the
`degeneracy' of $T$ and appears in many formulas, like the identities on 
curvature and derivatives below,  
or the Nomizu construction (Appendix \ref{app.nomizu}). It also
describes the algebraic action of any $2$-form   $X\haken T$, 
identified with an element of $\so(n)$,  on $T$
(see also \cite[proof of Prop. 2.1]{Agricola&F10}),
\be\label{eq.XT-auf-T}
(X\haken T)(T)(Y_1,Y_2,Y_3)\ =\ \sigma_T(Y_1,Y_2,Y_3,X).
\ee
\end{dfn}
The condition that $\nabla$ is metric implies
that each curvature transformation $\kr(X,Y)$ is skew-adjoint,
i.\,e.~$\kr$ can be interpreted  as an endomorphism
$\kr:\ \Lambda^2(M^n)\ra \hol^{\nabla} \subset \Lambda^2(M^n)$ where 
$\hol^{\nabla}$ is the holonomy algebra of $\nabla$. If, in addition, 
$\nabla T=0$,
then $\kr$ is a symmetric endomorphism,
\bdm
g(\kr(X,Y)V,W)\ =\ g(\kr(V,W)X,Y),
\edm
and the Ricci tensor $\Ric$ is symmetric too.
Consequently, the curvature operator $\kr = \ \psi\circ \pi$ is 
given by the projection
$\pi$ onto the holonomy algebra and a symmetric endomorphism 
$\psi : \hol^{\nabla} \ra \hol^{\nabla}$. If moreover $\psi$ is 
$\hol^{\nabla}$-equivariant, then
$\kr$ is $\nabla$-parallel, i.e.\,$T$ and $\kr$ define a naturally 
reductive structure.
For parallel skew torsion, the first
Bianchi identity may be written as \cite{Kobayashi&N1}
\be\label{Bianchi-I} 
\cyclic{X,Y,Z}\kr(X,Y,Z,V)\ =\  \sigma_T(X,Y,Z,V),
\ee
and the following remarkable relations hold \cite{FI02},
\be\label{thm.first-cons}
\nabla^g T \ = \ \frac{1}{2}\sigma_T  , \quad  
dT \ = \ 2\sigma_T .
\ee 
If  the torsion \emph{and} the curvature $\kr$ of $\nabla$ are
$\nabla$-parallel, the second Bianchi identity is reduced to the algebraic
relation (\cite{Kobayashi&N1})
\be\label{Bianchi-II}
\cyclic{X,Y,Z} \kr (T(X,Y),Z)\  =\ 0. 
\ee
These identities can be nicely formulated in the Clifford algebra (see
Theorem \ref{Bianchi-Clifford}) and we will use this approach several times in our dicussion.
Last but not least, let us recall that
any $\nabla$-parallel vector field $V$ or $2$-form $\Omega$ 
satisfies the differential equations (see \cite{Agricola&F04a})
\be\label{par-2-form}
\langle \nabla^g_X V,Y\rangle = - T(X,V,Y)/2 \ , \quad \mbox{or} \quad 
\delta^g\Omega\ =\ \frac{1}{2} \Omega\haken T,\quad
d\Omega\ =\ \sum_{i=1}^n (e_i\haken \Omega)\wedge (e_i\haken T).
\ee
In particular, any $\nabla$-parallel vector field is a Killing 
vector field of constant length.
A parallel torsion form is of special algebraic type. 
Indeed, in this case the holonomy algebra $\hol^\nabla$ is contained in the 
isotropy algebra $\iso(T)$ of $T$. Here are some useful observations on $\hol^\nabla$.
\begin{prop}\label{prop.sigma1}
Let $(M^n,g, T)$ be a Riemannian manifold with parallel skew torsion. Then 
the following properties hold:
\begin{enumerate}
\item Either $\sigma_T = 0$ or the algebra $ \hol^{\nabla} \subset \iso(T)$ 
is non-trivial.
\item If $\dim \hol^\nabla \leq 1$, then there exists a $2$-form $\omega$ such that $\sigma_T  =  \pm \, \omega \wedge \omega$.
\item If $\hol^\nabla$ is abelian, then $\nabla\kr =0$ and therefore 
$M^n$ is naturally reductive.
\end{enumerate}
\end{prop}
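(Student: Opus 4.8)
The plan is to extract all three items from the first Bianchi identity $(\ref{Bianchi-I})$ together with the structure $\kr=\psi\circ\pi$ of the curvature operator recalled above; the only genuine computation will occur in $(2)$. For $(1)$ I would argue by contraposition. Since $\nabla T=0$, the curvature takes values in the holonomy algebra, $\kr:\Lambda^2(M^n)\ra\hol^\nabla$, and $\hol^\nabla\subset\iso(T)$ by the observation that a parallel torsion form is fixed by holonomy. If $\hol^\nabla$ is trivial, then $\kr\equiv 0$, and $(\ref{Bianchi-I})$ reads $0=\cyclic{X,Y,Z}\kr(X,Y,Z,V)=\sigma_T(X,Y,Z,V)$, whence $\sigma_T=0$. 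Thus whenever $\sigma_T\neq 0$ the algebra $\hol^\nabla$ must be non-trivial, which is $(1)$.

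For $(2)$, if $\dim\hol^\nabla=0$ then $\sigma_T=0$ by $(1)$ and one takes $\omega=0$; the substantive case is $\dim\hol^\nabla=1$, say $\hol^\nabla=\R\,\omega$ with $\omega\in\Lambda^2(M^n)\cong\so(n)$. Here I would use that $\kr$ is a \emph{symmetric} endomorphism of $\Lambda^2(M^n)$ (a consequence of $\nabla T=0$) whose image lies in the line $\R\,\omega$: any symmetric operator with one-dimensional image is a multiple of the rank-one projector onto that line, so $\kr(\xi)=\mu\,\langle\xi,\omega\rangle\,\omega$, and as a $(0,4)$-tensor $\kr(X,Y,Z,V)=\mu\,\omega(X,Y)\,\omega(Z,V)$. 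Substituting into $(\ref{Bianchi-I})$ and invoking the elementary identity $\cyclic{X,Y,Z}\omega(X,Y)\,\omega(Z,V)=\tfrac12\,(\omega\wedge\omega)(X,Y,Z,V)$ (up to the normalization of $\wedge$) yields $\sigma_T=\tfrac{\mu}{2}\,\omega\wedge\omega$; absorbing $\sqrt{|\mu|/2}$ into $\omega$ gives $\sigma_T=\pm\,\omega\wedge\omega$ with sign $\sgn(\mu)$. This is the main obstacle, in the sense that it is essentially the only computation in the proposition: the care required is precisely in pinning down that $\kr$ is forced to be rank one and in tracking the constant and sign in the wedge identity.

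For $(3)$, I would lean directly on the statement recalled above that $\kr=\psi\circ\pi$ with $\pi$ the orthogonal projection onto $\hol^\nabla$ and $\psi$ symmetric, and that $\kr$ is $\nabla$-parallel as soon as $\psi$ is $\hol^\nabla$-equivariant. Equivariance of $\psi$ means that $\psi$ commutes with the adjoint action $\ad_{\xi}$ of every $\xi\in\hol^\nabla$ on $\hol^\nabla$. If $\hol^\nabla$ is abelian, then $\ad_{\xi}=0$ for all $\xi$, so \emph{every} endomorphism of $\hol^\nabla$—in particular $\psi$—is trivially equivariant (and $\pi$ is already equivariant, so no further hypothesis is needed). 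Hence $\nabla\kr=0$; combined with $\nabla T=0$ this is by definition a naturally reductive structure, so $M^n$ is naturally reductive. Parts $(1)$ and $(3)$ are thus immediate consequences of the first Bianchi identity and of the triviality of the adjoint action, respectively, which is why I expect the only real work to lie in the rank-one analysis of $(2)$.
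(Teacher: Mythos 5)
Your proof is correct, and for items $(1)$ and $(3)$ it coincides with the paper's own: $(1)$ is read off from the first Bianchi identity (\ref{Bianchi-I}) once one knows that $\kr$ takes values in $\hol^\nabla$, and $(3)$ is the paper's argument essentially verbatim ($\kr=\psi\circ\pi$ with $\pi$ equivariant, the adjoint action of the abelian algebra $\hol^\nabla$ on itself trivial, then the holonomy principle). The one place where you genuinely diverge is $(2)$: the paper writes $\kr=a\,\omega\odot\omega$ and then invokes the Clifford-algebra form of the Bianchi identity, $T^2+\kr=-2\sigma_T+\|T\|^2+\kr\in\R\subset\Cl$, comparing the degree-four components to obtain $\sigma_T=\pm\,\omega\wedge\omega$; you instead substitute the rank-one curvature into the classical identity (\ref{Bianchi-I}) and use the elementary identity $\cyclic{X,Y,Z}\omega(X,Y)\,\omega(Z,V)=\tfrac12(\omega\wedge\omega)(X,Y,Z,V)$. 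These are the same computation in two languages---Theorem \ref{Bianchi-Clifford} in Appendix \ref{app.nomizu} is proved precisely by matching the quartic part of $T^2+\kr$ against the classical Bianchi identity---so nothing is lost; if anything, your route is more self-contained, since it avoids the Clifford machinery, and it also supplies a detail the paper passes over quickly, namely that symmetry of $\kr$ together with a one-dimensional image forces $\kr=\mu\,\omega\odot\omega$ (your rank-one projector argument). Your wedge identity holds with the standard shuffle normalization, and your hedge about the normalization of $\wedge$ is harmless, since the constant $\mu/2$ and its sign are absorbed by rescaling $\omega$; the degenerate cases ($\mu=0$, or $\hol^\nabla$ trivial) are correctly covered by taking $\omega=0$.
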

\begin{proof}
The first claim follows  from the first Bianchi identity (\ref{Bianchi-I}). 
For the second, let $\omega \in  \hol^\nabla \subset \Lambda^2$ be a generator. 
Since $\nabla T = 0$, 
the symmetric curvature operator $\kr : \Lambda^2 \rightarrow \Lambda^2$ is 
the projection onto $\hol^\nabla$, $\kr  =  a \, \omega \odot \omega$.
Inside the Clifford algebra $\mathcal{C}$ the Bianchi identity 
reads as (see Appendix A)
\bdm
- \, 2 \, \sigma_T  +  \|T\|^2  + \kr \ = \ T^2 + \kr  \in  \R \ \subset \
\mathcal{C}. 
\edm
We compare the parts of order four in $\mathcal{C}$ and obtain the result. 
For the last assertion, we write again the  curvature operator as
$\kr= \psi\circ \pi$, where $\pi$ denotes the projection
onto the holonomy algebra $\hol^\nabla$
and $\psi : \hol^{\nabla} \ra \hol^{\nabla}$ is
a symmetric endomorphism.
As a projection, $\pi$ is $\hol^\nabla$-equivariant,
and $\hol^\nabla$ acts by the adjoint action on itself -- and this action is
trivial, since $\hol^\nabla$ is abelian. Therefore, $\psi$ is trivially
$\hol^\nabla$-equivariant, and so this holds for the composition $\kr$. 
This shows $\nabla\kr=0$ by the general holonomy principle.
\end{proof}
%
%
%
\section{Splitting properties}
%
%
Splitting theorems allow us to identify situations
in which a naturally reductive space has to be a product. 
We will use the following well-known statement.
\begin{thm}[{\cite{KV83}}]\label{thm.prod-nat-red}
The Riemannian product $(M_1,g_1)\x (M_2,g_2)$ is naturally reductive
if and only if both factors $(M_i,g_i), \,  i=1,2$ are naturally reductive.
\end{thm}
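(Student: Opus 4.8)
The plan is to prove both implications via the characterization of natural reductivity in terms of a metric connection with parallel skew torsion and parallel curvature, as adopted in this paper. Recall that $(M,g)$ is naturally reductive precisely when it carries a metric connection $\nabla$ with skew torsion $T \in \Lambda^3(M)$ satisfying $\nabla T = \nabla \kr = 0$. The strategy is to show that such a structure on a Riemannian product decomposes compatibly with the product, and conversely that structures on the factors assemble to one on the product.

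For the direction ($\Leftarrow$), suppose each factor $(M_i,g_i)$ is naturally reductive, with connection $\nabla^i$ having parallel skew torsion $T_i$ and parallel curvature $\kr^i$. On the product $M_1 \x M_2$ equipped with $g = g_1 \op g_2$, I would define $\nabla = \nabla^1 \op \nabla^2$, whose torsion is $T = T_1 + T_2$ (each $T_i$ viewed as a form on $M$ via the projection to the corresponding factor) and whose curvature is $\kr = \kr^1 \op \kr^2$. Since $\nabla$ differs from $\nabla^g = \nabla^{g_1} \op \nabla^{g_2}$ by the block-diagonal endomorphism built from $T_1$ and $T_2$, it is metric with skew torsion $T$. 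Because parallel transport of $\nabla$ preserves each factor $TM_i$, and $\nabla^i T_i = 0$, $\nabla^i \kr^i = 0$, the mixed derivatives vanish and one gets $\nabla T = 0$ and $\nabla \kr = 0$ directly. Hence the product is naturally reductive.

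For the direction ($\Rightarrow$), suppose $M_1 \x M_2$ is naturally reductive with connection $\nabla$, torsion $T$, and curvature $\kr$, all $\nabla$-parallel. The key point is that the product structure furnishes a $\nabla^g$-parallel splitting $TM = TM_1 \op TM_2$, and I want to show $\nabla$ also respects this splitting so that $T$ and $\kr$ decompose into pieces living purely on each factor. Using equation $(\ref{thm.first-cons})$, namely $\nabla^g T = \tfrac12 \sigma_T$, together with the fact that the de Rham splitting makes $\Lambda^*(M)$ a tensor product of the factor complexes, I would argue that the parallel form $T$ must respect the grading; more precisely, the product structure forces the cross-terms of $T$ (components mixing the two factors) to be parallel sections that are incompatible with the reducibility unless they vanish. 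Once $T = T_1 + T_2$ with $T_i \in \Lambda^3(M_i)$, the connection $\nabla$ block-diagonalizes, and $\kr$ inherits the block structure from the holonomy, so that restricting $\nabla$, $T_i$, $\kr^i$ to each factor yields a parallel skew-torsion, parallel-curvature structure there.

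The main obstacle is the ($\Rightarrow$) direction: ruling out mixed components of the torsion. A priori a metric connection with skew torsion on a Riemannian product need not respect the factors, so I must genuinely use $\nabla T = \nabla \kr = 0$ rather than mere metricity. The cleanest route is to invoke the de Rham decomposition of $\nabla^g$ and show that a parallel $3$-form with parallel $\sigma_T$ cannot have cross-terms, since such terms would produce curvature mixing the two $\nabla^g$-parallel distributions and contradict the product holonomy via the first Bianchi identity $(\ref{Bianchi-I})$. Since this statement is attributed to \cite{KV83} and used only as a known tool, I would present the above as the conceptual skeleton and defer the detailed bookkeeping of the mixed components to that reference.
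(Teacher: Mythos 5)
Your ($\Leftarrow$) direction is essentially correct: the product of the two transitive groups acts transitively, the product connection $\nabla^1\oplus\nabla^2$ is metric with skew torsion $T_1+T_2$, and its torsion and curvature are $\nabla$-parallel, so the product is naturally reductive. Note that the paper itself offers no proof of this theorem (it is quoted from \cite{KV83}), so the only meaningful comparison is with what a correct argument must contain.

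Your ($\Rightarrow$) direction, however, rests on a claim that is false, and the paper itself contains the counterexample. You assert that on a Riemannian product the reducibility forces the mixed components of the parallel torsion to vanish, so that the given connection $\nabla$ block-diagonalizes and restricts to each factor. But the Example following Theorem \ref{thm.sigmaT-vanishes} states exactly the opposite: on flat $\R^n$, any constant-coefficient $3$-form with $\sigma_T=0$ yields a connection with parallel skew torsion \emph{and} parallel curvature -- hence a naturally reductive structure, by Tricerri's theorem \cite{Tricerri&V1} quoted in the introduction -- and, as the paper remarks there, ``the Riemannian manifold is decomposable, but the torsion is not.'' Concretely, on $\R^4=\R\times\R^3$ take $T=\tfrac{1}{\sqrt{2}}\,(e_1+e_4)\wedge e_2\wedge e_3$: this is the volume form of a $3$-plane not adapted to the splitting, so $\sigma_T=0$, the torsion is parallel, the curvature $\kr(X,Y)=\tfrac14\,T(T(X,Y),-)$ is built from $T$ and hence parallel, yet the cross-term $\tfrac{1}{\sqrt{2}}\,e_1\wedge e_2\wedge e_3$ is nonzero. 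Both mechanisms you propose for excluding cross-terms fail here: the holonomy of $\nabla$ (an $\so(3)$ acting on the span of $e_1+e_4,e_2,e_3$) is \emph{not} required to preserve the de Rham splitting -- only the holonomy of $\nabla^g$ is -- and the first Bianchi identity (\ref{Bianchi-I}) is satisfied, since it constrains the curvature of $\nabla$, not of $\nabla^g$. The correct statement is therefore weaker and is precisely what makes ($\Rightarrow$) nontrivial: each factor admits \emph{some} naturally reductive structure, in general unrelated to the restriction of the given $\nabla$ (in the example, each Euclidean factor is naturally reductive via its Levi-Civita connection, which has nothing to do with $\nabla$). A proof must proceed differently, e.g.\ through the splitting of the isometry group along the de Rham factors, so that transitivity descends, followed by the construction of a reductive decomposition on each factor; this is the content of \cite{KV83} that cannot be deferred to the reference if the aim is to prove the theorem.
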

The following two notions turn out to be appropriate for analyzing $3$-forms.
\begin{dfn}\label{kerT-algebra-gT}
For any $3$-form $T\in\Lambda^3(M^n)$, we define the kernel 
\bdm
\ker T \ :=\ \{ X\in TM^n\, |\, X\haken T = 0\},
\edm
and the Lie algebra generated by the image, 
\bdm
\g_T\ :=\ \mathrm{Lie}\langle X\haken T \, |\, X\in TM^n \rangle.
\edm
The Lie algebra $\g_T$  was first considered in  \cite{Agricola&F04a} and 
is \emph{not}
related in any obvious way to the isotropy algebra of $T$.
If $\g_T$ does not act irreducibly on $TM^n$, 
it is known that it splits into an orthogonal sum $V_1\oplus V_2$ of
$\g_T$-modules
and $T=T_1+ T_2$ with $T_i\in\Lambda^3 (V_i)$ \cite[Prop.3.2]{Agricola&F04a};
thus $V_1$ and $V_2$  are  $\nabla$-parallel
subbundles of $TM^n$. If $\sigma_T=0$, equation (\ref{thm.first-cons}) 
and de Rham's Theorem imply:
\end{dfn}
\begin{thm}\label{cor.splitting-g-T}
Let $(M^n,g,T)$ be a complete, simply connected Riemannian manifold
with parallel skew torsion $T$ such that $\sigma_T=0$, and let 
$TM^n= \mathcal{T}_1\oplus \ldots \oplus \mathcal{T}_q$ be the decomposition of $TM^n$ into 
$\g_T$-irreducible, $\nabla$-parallel distributions. Then all 
distributions $\mathcal{T}_i$
are $\nabla^g$-parallel integrable distributions,  
$M^n$ is a Riemannian product, and the torsion
$T$ splits accordingly,
\bdm
(M^n,g,T)\ =\ (M_1,g_1,T_1)\x \ldots \x(M_q, g_q, T_q), \quad T \ = \ \sum_{i=1}^q T_i \, .
\edm
\end{thm}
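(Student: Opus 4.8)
The plan is to combine the two ingredients already assembled in the excerpt: the algebraic splitting of the torsion induced by the Lie algebra $\g_T$, and the de Rham decomposition theorem, with equation~(\ref{thm.first-cons}) serving as the bridge between the $\nabla$-parallel structure and $\nabla^g$-parallelism. First I would recall from Definition~\ref{kerT-algebra-gT} that the hypothesis $\sigma_T=0$ forces, via~(\ref{thm.first-cons}), that $\nabla^g T = \tfrac12\sigma_T = 0$; hence $T$ is not merely $\nabla$-parallel but in fact \emph{Levi-Civita parallel}. This is the crucial upgrade: once $\nabla^g T = 0$, the two connections $\nabla$ and $\nabla^g$ share the same parallel transport on tensorial objects built from $T$, and so the distributions $\mathcal{T}_i$ obtained as $\g_T$-irreducible, $\nabla$-parallel pieces are automatically $\nabla^g$-parallel as well.

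Next I would argue the $\nabla^g$-parallelism of each $\mathcal{T}_i$ directly. Each $\mathcal{T}_i$ is, by construction, spanned pointwise by (images of) the endomorphisms $X\haken T$; since $T$ is $\nabla^g$-parallel, the decomposition $TM^n = \bigoplus_i \mathcal{T}_i$ into $\g_T$-isotypic/irreducible summands is preserved by $\nabla^g$-parallel transport, so each $\mathcal{T}_i$ is a $\nabla^g$-parallel subbundle. A standard consequence is that each such distribution is integrable and totally geodesic, and its orthogonal complement is likewise $\nabla^g$-parallel. At this point the local de Rham decomposition theorem applies to the simply connected, complete manifold $(M^n,g)$: the mutually orthogonal, $\nabla^g$-parallel distributions $\mathcal{T}_1,\ldots,\mathcal{T}_q$ integrate to a global Riemannian product $M^n = M_1\times\cdots\times M_q$ with $\mathcal{T}_i = TM_i$.

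It remains to see that the torsion splits as $T=\sum_i T_i$ with $T_i\in\Lambda^3(M_i)$. This follows from the algebraic statement cited in Definition~\ref{kerT-algebra-gT} from \cite[Prop.~3.2]{Agricola&F04a}: whenever $\g_T$ preserves an orthogonal splitting of $TM^n$, the form $T$ has no mixed components across the summands, i.e.\ $T$ restricts to a $3$-form on each factor $V_i=\mathcal{T}_i$. Applying this inductively to the full decomposition $TM^n=\mathcal{T}_1\oplus\cdots\oplus\mathcal{T}_q$ yields $T=\sum_{i=1}^q T_i$ with each $T_i\in\Lambda^3(M_i)$, and since $\nabla^g T=0$ restricts to $\nabla^{g_i} T_i = 0$ on each factor, each $(M_i,g_i,T_i)$ again carries parallel skew torsion. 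Finally one checks that the normalized connection $\nabla$ on $M^n$ is the product of the induced connections $\nabla^i$ on the $M_i$, so the product decomposition is compatible with the whole parallel-torsion structure.

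The main obstacle I anticipate is not the global de Rham gluing — that is handed to us by completeness and simple-connectedness — but rather pinning down precisely that the $\g_T$-invariant splitting is \emph{orthogonal} and exhausts $TM^n$ including the kernel $\ker T$, and that $\nabla^g$-parallelism (as opposed to only $\nabla$-parallelism) genuinely holds on each summand. The delicate point is that $\g_T$ is defined as a Lie algebra generated by the $X\haken T$ and is, as the text emphasizes, not the isotropy algebra of $T$; one must verify that the irreducible $\g_T$-modules $\mathcal{T}_i$ are orthogonal and that the trivial summand corresponding to $\ker T$ (on which $\g_T$ acts trivially) is handled correctly, so that the product decomposition accounts for the full tangent bundle. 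Once the passage $\sigma_T=0 \Rightarrow \nabla^g T = 0$ is invoked, however, everything else reduces to assembling standard facts.
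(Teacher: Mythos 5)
Your proposal is correct and takes essentially the same route as the paper: the paper's (very terse) argument likewise combines the implication $\nabla^g T=\tfrac12\sigma_T=0$ from equation (\ref{thm.first-cons}), the algebraic splitting $T=\sum_i T_i$ with $\nabla$-parallel orthogonal summands from \cite[Prop.\,3.2]{Agricola&F04a}, and de Rham's theorem. The only difference is the (immaterial) order of steps -- the paper quotes the splitting of $T$ first and then uses $\nabla^g T=0$ to upgrade the distributions to $\nabla^g$-parallel ones, whereas you establish $\nabla^g$-parallelism via parallel transport before splitting $T$.
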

The following new splitting theorem for manifolds with
parallel skew torsion justifies why we will consider in the following
only connections with parallel skew torsion whose kernel is trivial.
Let us emphasize that the vanishing of $\sigma_T$ is not needed for it.
\begin{thm}\label{thm.splitting-with-kernel}
Let $(M^n,g,T)$ be a complete, simply connected Riemannian manifold
with parallel skew torsion $T$. Then $\ker T$ and $(\ker T)^\perp$
are  $\nabla$-parallel and $\nabla^g$-parallel integrable distributions.
Furthermore, $M^n$ is a Riemannian product such that $T$ vanishes
on one factor and has trivial kernel on the other,
\bdm
(M^n,g,T)\ =\ (M_1,g_1,T_1=0)\x (M_2, g_2, T_2),\quad \ker T_2\, =\, \{ 0\}.
\edm
\end{thm}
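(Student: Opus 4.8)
The plan is to show first that $\ker T$ is a $\nabla$-parallel distribution of constant rank, then to upgrade this to $\nabla^g$-parallelism, deduce integrability, and finally apply the de Rham decomposition theorem; the splitting of $T$ will then be essentially bookkeeping. So first I would check that $\ker T$ is a smooth distribution of constant rank and is $\nabla$-parallel. Since $\nabla T=0$ and $\nabla$ is metric, $\nabla$-parallel transport along any curve is an isometry fixing $T$, hence it commutes with interior multiplication and carries $\ker T$ at one point isomorphically onto $\ker T$ at another; this gives constancy of the rank and $\nabla$-parallelism in one stroke. (Infinitesimally: for a section $Y$ of $\ker T$, $(\nabla_X Y)\haken T=\nabla_X(Y\haken T)-Y\haken(\nabla_X T)=0$.) By metric compatibility of $\nabla$, the orthogonal complement $(\ker T)^\perp$ is $\nabla$-parallel as well.

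The heart of the matter is the passage from $\nabla$ to $\nabla^g$, and it rests on one elementary observation: by $(\ref{eq-norm-torsion})$ the two connections differ by the torsion term $\tfrac12 T(X,Y,-)$, and this term vanishes as soon as $Y\in\ker T$ (because $Y\haken T=0$ forces $T(X,Y,-)=-T(Y,X,-)=0$). Hence for every section $Y$ of $\ker T$ and every $X$ one has $\nabla^g_X Y=\nabla_X Y\in\ker T$, so $\ker T$ is $\nabla^g$-parallel; since $\nabla^g$ is metric, $(\ker T)^\perp$ is $\nabla^g$-parallel too. Being $\nabla^g$-parallel and $\nabla^g$ being torsion-free, both distributions are involutive, hence integrable, because $[X,Y]=\nabla^g_X Y-\nabla^g_Y X$ stays in the distribution. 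Now $TM^n=\ker T\op(\ker T)^\perp$ is an orthogonal splitting into $\nabla^g$-parallel (hence holonomy-invariant) integrable distributions, so with $M^n$ complete and simply connected the de Rham decomposition theorem produces a global Riemannian product $M^n=M_1\x M_2$ with $TM_1=\ker T$ and $TM_2=(\ker T)^\perp$.

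It remains to split $T$. By the definition of $\ker T$ we have $X\haken T=0$ for every $X$ tangent to $M_1$, so $T$ is a section of $\Lambda^3\big((\ker T)^\perp\big)$; in particular $T_1=0$ and $\ker T_2=\ker T\cap(\ker T)^\perp=\{0\}$. To see that $T$ is genuinely pulled back from $M_2$, I would show it is constant in the $M_1$-directions, i.e.\ $\nabla^g_X T=0$ for $X$ tangent to $M_1$. Here one must be slightly careful, since $T$ is not $\nabla^g$-parallel in general ($\nabla^g T=\tfrac12\sigma_T$ by $(\ref{thm.first-cons})$). But for $X\in\ker T$ the torsion correction $\tfrac12 T(X,-,-)$ vanishes identically, so $\nabla^g_X T=\nabla_X T=0$; equivalently, $\sigma_T$ is built from $T$ and therefore lies in $\Lambda^4\big((\ker T)^\perp\big)$, so that $X\haken\sigma_T=0$. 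Either way $T$ descends to a $3$-form $T_2$ on $M_2$ with trivial kernel, which is exactly the asserted decomposition.

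The main obstacle, such as it is, is precisely this last point. The distributions and the Riemannian product come out formally from $\nabla$-parallelism together with metric compatibility, but $T$ is a priori only $\nabla$-parallel, not $\nabla^g$-parallel, so the genuinely non-formal step is to exploit the vanishing of the torsion correction on $\ker T$ (equivalently, the fact that $\sigma_T$ has no $\ker T$-component) in order to conclude that the torsion really lives on the second factor.
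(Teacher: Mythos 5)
Your proof is correct and follows essentially the same route as the paper: $\nabla$-parallelism of $\ker T$ from $\nabla T=0$, the key observation that the torsion correction $\tfrac12 T(X,Y,-)$ vanishes on $\ker T$ so that $\nabla^g$-parallelism follows, then metric compatibility, integrability, and de Rham's theorem. Your additional care in verifying that $T$ is genuinely pulled back from the second factor (via $\nabla^g_X T=\nabla_X T=0$ for $X\in\ker T$) is a detail the paper leaves implicit in its appeal to de Rham, and it is correctly handled.
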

\begin{proof}
%
If $T$ is parallel for the connection $\nabla$ it defines, 
$\ker T$ and $(\ker T)^\perp$ are clearly $\nabla$-parallel distributions.
Suppose $X$ is a vector field in
$\ker T$.  Then, for any vector field $Y$, 
\bdm
\nabla^g_Y X\ =\ \nabla_Y X -\frac{1}{2} T(Y,X) \ =\ 
\nabla_Y X 
\edm
belongs again to $\ker T$. Consequently, $\ker T$ is  $\nabla^g$-parallel
as well. Since $\nabla$ and $\nabla^g$ are metric, the same holds
for $(\ker T)^\perp$, and both distributions are integrable.
The last assertion follows then from de Rham's Theorem.
\end{proof}
In particular, this applies to the following situation:
given a simply connected naturally reductive space with  
torsion $T$ that does not depend on all directions, the space is a
product of lower dimensional naturally reductive homogeneous spaces.
%
%
\section{Vanishing of $\sigma_T$}
%
Let $(M^n,g)$ be a Riemannian manifold with parallel skew torsion $T$:
we are interested in describing those manifolds for which $\sigma_T=0$.
Proposition \ref{prop.sigma1} and Theorem 
\ref{cor.splitting-g-T} suggest that the vanishing of $\sigma_T$ is a strong
indication that the geometry is degenerate. Let us give some examples
where this happens. In dimension $3$, the volume form
of any metric connection $\nabla$ is $\nabla$-parallel, and $\sigma_T=0$
for trivial reasons: hence, any Riemannian $3$-manifold is an
example of such a geometry. In dimension $4$, again $\sigma_T=0$
for algebraic reasons, so the condition is vacuous. 
Finally, compact
Lie groups with a bi-invariant metric and any connection from the
Cartan-Schouten family of connections are well-known examples
\cite[Proposition 2.12]{Kobayashi&N2}. We shall now prove:
\begin{thm}\label{thm.sigmaT-vanishes}%
Let $(M^n,g)$ be an irreducible, complete, and simply connected
 Riemannian  manifold with
parallel skew torsion $T\neq 0$ such that $\sigma_T=0$, $n\geq 5$. Then 
$M^n$ is a simple compact Lie group with bi-invariant metric or its dual
noncompact symmetric space.
\end{thm}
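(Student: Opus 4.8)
The plan is to exploit the key structural fact that, under the hypothesis $\sigma_T=0$, equation $(\ref{thm.first-cons})$ gives $\nabla^g T = \tfrac{1}{2}\sigma_T = 0$ and $dT = 2\sigma_T = 0$; thus $T$ is a \emph{Levi-Civita parallel} $3$-form on $(M^n,g)$. This transfers the problem from the exotic connection $\nabla$ to the classical holonomy group $\Hol(\nabla^g)$. First I would observe that a nonzero parallel $3$-form means the holonomy representation of $\nabla^g$ on $TM^n$ admits a nontrivial invariant element of $\Lambda^3(\R^n)$, so $\Hol(\nabla^g)$ cannot be all of $\SO(n)$ (the standard representation of $\SO(n)$ has no invariant $3$-forms for $n\geq 5$). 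Since $(M^n,g)$ is assumed irreducible, simply connected, and complete, the Berger holonomy classification applies: either $(M^n,g)$ is a Riemannian symmetric space, or $\Hol(\nabla^g)$ is one of the groups on Berger's list.

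Next I would eliminate the non-symmetric Berger cases by checking which holonomy groups admit an invariant $3$-form. The groups $\U(m)$, $\SU(m)$, $\Sympl(k)\Sympl(1)$, $\Sympl(k)$ act on their standard (even-dimensional) representations with invariant forms in even degree only, so no invariant $3$-form exists in these cases. The exceptional holonomies $G_2$ (dimension $7$) and $\Spin(7)$ (dimension $8$) \emph{do} carry invariant $3$- (resp.\ $4$-) forms, but the associated parallel form $T$ would then force $\sigma_T\neq 0$: for the $G_2$ case the defining $3$-form has $\sigma_T = \tfrac{1}{2}\,{*}T \neq 0$, contradicting the hypothesis. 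Hence the only surviving possibility is that $(M^n,g)$ is an irreducible Riemannian symmetric space $G/K$.

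It then remains to pin down \emph{which} symmetric spaces carry a nonzero parallel $3$-form with $\sigma_T=0$. On a symmetric space the isotropy representation of $K$ on $\m\cong T_oM$ controls the parallel tensors, so a parallel $3$-form corresponds to a $K$-invariant element of $\Lambda^3(\m)$. For an irreducible symmetric space that is not a Lie group (equivalently, whose isotropy representation is irreducible and of a type with no invariant $3$-form), $\Lambda^3(\m)^K = 0$, so $T$ must vanish---contradiction. The remaining irreducible symmetric spaces are exactly the compact simple Lie groups $G$ with bi-invariant metric, viewed as the symmetric space $(G\times G)/\Delta G$, and their noncompact duals: here $\m\cong\g$ and the Cartan $3$-form $T(X,Y,Z)=\langle[X,Y],Z\rangle$ is $\Ad(G)$-invariant and parallel, and moreover the bi-invariant metric satisfies $\sigma_T=0$ precisely because of the Jacobi identity (this is the Cartan--Schouten example cited before the theorem). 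I would finish by verifying that on these spaces $T$ is nonzero iff $G$ is non-abelian, which is automatic for simple $G$.

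The main obstacle I anticipate is the final bookkeeping step: showing rigorously that no irreducible symmetric space \emph{other} than a group manifold supports a nonzero $K$-invariant $3$-form (with $\sigma_T=0$). A clean way to handle this is to note that a parallel $3$-form $T$ with $\sigma_T=0$ makes $\g_T$ (the Lie algebra of Definition \ref{kerT-algebra-gT}) act on $TM^n$, and by irreducibility $\g_T$ acts irreducibly; combined with the parallelism of $T$ and the symmetric-space condition one reconstructs the bracket, identifying $T$ with a Cartan form. Checking $\Lambda^3(\m)^K=0$ case by case across Cartan's list is the routine-but-lengthy part; the conceptual content is entirely in the reduction via $\nabla^g T=0$ and Berger's theorem, and the dimension restriction $n\geq 5$ is what rules out the low-dimensional coincidences (e.g.\ the vacuous $n=3,4$ cases noted above).
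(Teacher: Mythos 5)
Your opening reduction ($\nabla^g T=\tfrac12\sigma_T=0$, so $T$ is Levi-Civita parallel and the Riemannian holonomy must preserve a $3$-form) is exactly the paper's starting point, but from there you take a different route -- Berger's list plus invariant-theory elimination -- and this route has two genuine gaps. The first is a concretely false claim: $\SU(m)$ does \emph{not} act on $\R^{2m}$ with invariant forms in even degree only. The group $\SU(3)\subset\SO(6)$ preserves the real and imaginary parts of the holomorphic volume form, which are $3$-forms, and every Calabi--Yau $3$-fold (dimension $6\geq 5$, generically irreducible, complete, simply connected, and non-homogeneous) carries these as $\nabla^g$-parallel $3$-forms. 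So your elimination of the Berger case $\SU(m)$ fails precisely in a dimension covered by the theorem. The case \emph{can} be excluded, but only by using the hypothesis $\sigma_T=0$: for $T=a\,\Re\Omega+b\,\Im\Omega$ one computes that $\sigma_T$ is a nonzero multiple of $e_{1234}+e_{1256}+e_{3456}$ (compare the formula in the paper's case D, where such forms appear with $\rk(*\sigma_T)=6$), hence $\sigma_T\neq 0$ unless $T=0$. As written, your argument would wrongly conclude that Calabi--Yau $3$-folds cannot exist.

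The second gap is that the symmetric-space half of your argument is not actually carried out: you assert that $\Lambda^3(\m)^K=0$ for every irreducible symmetric space not of group type and defer this to a ``routine-but-lengthy'' inspection of Cartan's list. That assertion is the entire content of the theorem at this stage, and it is not routine -- it is exactly what the Skew Holonomy Theorem of Olmos--Reggiani and Nagy (Theorem \ref{thm.SHS}) is invoked for in the paper. The paper's proof avoids any case-by-case check: since $\sigma_T=0$, equation (\ref{eq.XT-auf-T}) shows every $2$-form $X\haken T$ annihilates $T$, so the algebra $\g_T$ of Definition \ref{kerT-algebra-gT} lies in $\iso(T)$ and $(G_T,V,T)$ is a skew-torsion holonomy system (irreducible after invoking the splitting Theorem \ref{cor.splitting-g-T}); the Skew Holonomy Theorem then forces either transitivity, i.e.\ $\g_T=\so(n)$ and hence $T=0$ for $n\geq 5$, or that $T$ is the Cartan form of a simple Lie algebra structure on $V$ with $\g_T$ its adjoint representation, after which $\iso(T)=\g_T=\hol^g$ and the uniqueness of a symmetric space with given isotropy representation finish the proof. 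Your closing sketch (``one reconstructs the bracket, identifying $T$ with a Cartan form'') gestures at exactly this, but note that even there the irreducibility of the $\g_T$-action does not follow from irreducibility of $M$ alone -- it needs the splitting theorem -- and ``reconstructing the bracket'' is the hard statement that must be cited, not assumed. In short: the skeleton (parallel $3$-form $\Rightarrow$ special holonomy $\Rightarrow$ group manifold) is the right one, but both elimination steps need the $\sigma_T=0$ hypothesis fed in through concrete computations or through the skew-holonomy machinery, and neither is supplied in your write-up.
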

\begin{proof}
Let $\hol^g, \, \hol^\nabla$, and 
$\iso(T)$ be the Lie algebras of the holonomy groups of $\nabla^g, \, \nabla$
and the Lie algebra of the isotropy group of $T$, respectively.
By equation (\ref{thm.first-cons}),  $\nabla^g T=\sigma_T / 2 =0$, hence
both $\hol^\nabla$ and $\hol^g$ are subalgebras of $\iso(T)$.
This implies also that any multiple of $T$ defines again
a connection with parallel skew torsion and vanishing $4$-form.
Fix some point $p\in M^n$, $V:=T_p M$. The Nomizu construction
(Lemma \ref{lem.nomizu}) yields for $\sigma_T=0$ that $T$ defines a 
Lie algebra structure on
$V$ by $g_p([X,Y],Z)= \alpha \, T_p(X,Y,Z)$ for any $ \alpha\in\R$.
By assumption, $\hol^g$ acts irreducibly on $V$.
Consider the Lie algebra $\g_T$ generated by all 
elements of the form $X\haken T$, see Definition \ref{kerT-algebra-gT}.
Since $\so(n)\cong \Lambda^2 (V)$ acts on $V$, it also acts on $\Lambda^3 (V)$, 
and for an element $X\haken T \in \Lambda^2 (V)$ this action on $T$ is
 given by 
(see equation (\ref{eq.XT-auf-T}))
\bdm
(X\haken T)(T)(Y_1,Y_2,Y_3)\ =\ \sigma_T(Y_1,Y_2,Y_3,X)\ = \ 0.
\edm
Hence, $\g_T\subset \iso(T)$ as well. We can
assume that $\g_T$ acts irreducibly on $V$ by Theorem 
\ref{cor.splitting-g-T}. We conclude that $(G_T, V,T)$ defines an irreducible
skew torsion holonomy system in the sense of Definition \ref{dfn.SHS},
where $G_T$ denotes the Lie subgroup of $\SO(n)$ with Lie algebra $\g_T$.
By the Skew Holonomy Theorem of Olmos and Reggiani (Theorem \ref{thm.SHS}),
 $(G_T, V,T)$ is either transitive or symmetric. 
If it is transitive,
$G_T=\SO(n)$, hence $\g_T=\so(n)=\iso(T)$. But for $n\geq 5$, this implies
$T=0$, a contradiction.  Thus,
the skew holonomy system is symmetric, i.\,e.~$G_T$ is simple, acts on $V$
by its adjoint action, and $T$ is, up to multiples, unique. 

Next, we observe that the algebras $\g_T$ and $\iso(T)$ coincide. Indeed,
any element of $\iso(T)$ is a derivation of the simple Lie algebra $\g_T$.
But all derivations of $\g_T$ are inner, i.\,e.~defined by elements
of $\g_T$. As a consequence, the holonomy algebra $\hol^g$ is also contained
in $\g_T$. If they were not equal, then we could decompose
the tangent space into $V=\g_T= \hol^g\oplus (\hol^g)^\perp$, which
contradicts the assumption that $M$ is an irreducible Riemannian manifold.
To summarize, $\iso(T)= \g_T = \hol^g$.
By Berger's
holonomy theorem, $M^n$ is an irreducible Riemannian symmetric space, and
its isotropy representation of $\hol^g$ is its adjoint representation.
Certainly, the compact simple Lie group $G_T$ itself, viewed as a symmetric
space (or its dual noncompact space), is such a manifold. Since a
symmetric space is uniquely characterised by its isotropy representation, \cite[p.427, Theorem 5.11]{HelgasonBook},
other symmetric spaces cannot occur.
\end{proof}
\begin{exa}
Let $T$ be a $3$-form with constant coefficients on  $\R^n$
satisfying $\sigma_T=0$. Then the flat space $(\R^n,g, T)$ is a 
reducible Riemannian
manifold with parallel skew torsion and $\sigma_T=0$. This shows that the 
assumption that $M$ be irreducible is crucial in this result.
Observe that in this example, the Riemannian manifold is decomposable,
but the torsion is not.
\end{exa}
%
%
\section{The classification in dimension $3$}\label{subsec:dim-3}
%
Let $(M^3,g)$ be a complete Riemannian $3$-manifold admitting a metric 
connection $\nabla$ with parallel torsion $T\neq 0$.
Since $\|T\|=\mathrm{const}$, $T$ defines a nowhere vanishing $3$-form and
hence $M^3$ is orientable. Denoting the volume form by $dM$, we conclude
$T=\lambda dM$ for some constant $\lambda\neq 0$.

We begin by giving a proof of the classification of $3$-dimensional
naturally reductive spaces that illustrates the methods that we will use
later. This result is due to Tricerri
and Vanhecke \cite{Tricerri&V1}, but with another approach.
A detailed description of naturally reductive metrics on
$\SL(2,\R)$ (case 2b in the theorem below) may be found 
in \cite{Halverscheid&I06}.
\begin{NB}\label{notations}
For ease of notation, we will often omit the wedge product between
$1$-forms $e_i, e_j\ldots$, i.\,e.~$e_{ijk}:=e_i\wedge e_j\wedge e_k$,
like in the following proof. Furthermore, we will freely identify
$\so(n)$ and $\Lambda^2(\R^n)$ where necessary, i.\,e.~there is no
difference between the $2$-form $e_{ij}$ and the endomorphism $E_{ij}$
mapping $e_i$ to $e_j$, $e_j$ to $-e_i$ and eveything else to zero.   
\end{NB}
%
\begin{thm}[{\cite[Thm 6.5, p.\,63]{Tricerri&V1}}]\label{thm.class-dim3}
A  three-dimensional complete, simply connected  naturally 
reductive  space $(M^3,g)$ is either
\begin{enumerate} 
\item a space form: $\R^3, S^3$ or $\mathbb{H}^3$, or
\item isometric to one of the following Lie groups with a 
suitable left-invariant metric:
\begin{enumerate}
\item the special unitary group $\SU(2)$,
\item $\widetilde{\SL}(2,\R)$, the universal cover of $\SL(2,\R)$, 
\item the $3$-dimensional Heisenberg group $H^3$ (see section 9.3).
\end{enumerate}
\end{enumerate}
\end{thm}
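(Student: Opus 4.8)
The plan is to reduce the classification to understanding metric connections with parallel torsion on a $3$-manifold, exploiting the fact that in dimension $3$ the torsion form is extremely rigid. As already observed in the excerpt, any nonvanishing parallel $3$-form must be $T=\lambda\,dM$ for a constant $\lambda\neq 0$, where $dM$ is the volume form; this is the key simplification that dimension $3$ affords. First I would dispose of the degenerate case $\lambda=0$ (equivalently $T=0$): then $\nabla=\nabla^g$, the space is a simply connected Riemannian locally symmetric space with $\nabla^g$-parallel curvature of constant sectional curvature (since in dimension $3$ the curvature is determined by the Ricci tensor, and symmetry together with the Einstein condition forces constant curvature), and so $M^3$ is one of the space forms $\R^3$, $S^3$, or $\H^3$. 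This gives case $(1)$.

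For the main case $T=\lambda\,dM$ with $\lambda\neq 0$, I would first record that $\sigma_T=0$ automatically in dimension $3$ (there is simply no nonzero $4$-form), so the hypotheses of Theorem~\ref{thm.sigmaT-vanishes} are nearly met except that the dimension restriction there is $n\geq 5$. Hence I cannot invoke that theorem directly and must argue by hand. The natural route is the Nomizu construction (Lemma~\ref{lem.nomizu}): at a point $p$, the bracket $g_p([X,Y],Z):=\alpha\,T_p(X,Y,Z)$ makes $V=T_pM^3$ into a Lie algebra, which here is $\so(3)\cong\su(2)$ up to scale since $T$ is proportional to the volume form. One then computes the curvature of $\nabla$ from the Nomizu data: because $\nabla T=0$ forces $\nabla\kr=0$ by the symmetry and equivariance arguments of Proposition~\ref{prop.sigma1} (the holonomy algebra is at most one-dimensional in the relevant subcase, hence abelian), the pair $(T,\kr)$ defines a naturally reductive structure, and Tricerri's theorem identifies $(M^3,g)$ with a homogeneous space $G/K$.

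The remaining work is to pin down which group occurs, and this is where the geometry enters. I would parametrize by the scalar curvature of $\nabla^g$, or equivalently by the relation between $\lambda$ and the Ricci eigenvalue: the identity $dT=2\sigma_T=0$ together with $\nabla^g T=\tfrac12\sigma_T=0$ from \eqref{thm.first-cons} shows $T$ is $\nabla^g$-parallel, so $M^3$ is a naturally reductive space whose canonical connection is the Cartan--Schouten connection of the underlying Lie group. Comparing the sign and size of the sectional curvature of $g$ against $\|T\|^2=6\lambda^2$ then distinguishes the three left-invariant metrics: positive curvature dominating gives $\SU(2)$ (case 2a), the indefinite/mixed case gives $\widetilde{\SL}(2,\R)$ (case 2b), and the degenerate case where the metric becomes flat in the appropriate sense yet $T\neq 0$ gives the Heisenberg group $H^3$ (case 2c), which is precisely the nilpotent limit.

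The main obstacle I anticipate is the bookkeeping in the last step: cleanly separating the three unimodular Lie groups purely in terms of the invariants $\lambda$ and the curvature of $g$, and verifying that no other unimodular three-dimensional Lie group (nor a non-unimodular one) can carry such a structure. This amounts to checking that natural reductivity with skew torsion forces unimodularity and then matching the Milnor-type classification of left-invariant metrics on three-dimensional unimodular Lie groups to the three cases above. I expect the algebra here to be routine once the Nomizu data is written out, but it is the step requiring the most care to ensure completeness.
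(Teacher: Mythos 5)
Your proposal reproduces the paper's starting point (the normal form $T=\lambda\,dM$ and the Nomizu construction), but it is missing the step on which the paper's proof actually turns: \emph{proving} that $M^3$ is a Lie group. After invoking Tricerri to write $M^3=G/K$, you switch to speaking of ``the underlying Lie group'' and its Cartan--Schouten connection, i.e.~you assume exactly what must be established in case (2). The paper closes this gap as follows: in the non-Einstein case the holonomy algebra $\k$ is at most one-dimensional, so the Nomizu algebra is $\g=\k\oplus\m$ with $T=\lambda e_{123}$, $\kr=\alpha\, e_{12}\odot e_{12}$, $\k=\Lin(\Omega)$, $\Omega=e_{12}$, and one checks that $\h:=\Lin(e_1,e_2,\tilde\Omega)$ with $\tilde\Omega:=[e_1,e_2]=-\alpha\Omega-\lambda e_3$ is a three-dimensional \emph{subalgebra} of $\g$ which is transversal to $\k$ precisely because $\lambda\neq 0$; hence the corresponding subgroup $H\subset G$ acts transitively, so $M^3$ is $H$ with a left-invariant metric, and the commutators of $\h$ give $\su(2)$, $\sl(2,\R)$, or the Heisenberg algebra according to the sign of $\lambda^2-\alpha$. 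Nothing in your outline substitutes for this transversal-subalgebra argument, and the claim you put in its place is false: the canonical connection here is \emph{not} a Cartan--Schouten connection in general, since the relevant metrics on $\widetilde{\SL}(2,\R)$ and $H^3$ are left-invariant but not bi-invariant ($H^3$, being nilpotent and non-abelian, admits no bi-invariant Riemannian metric at all). Note also that in dimension $3$ the identity $\nabla^g T=0$ is vacuous---every multiple of the volume form is $\nabla^g$-parallel---so it cannot be used to produce a group structure.

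Two further repairs are needed in your case analysis. First, your opening dichotomy ($T=0$ versus $T\neq 0$, with ``symmetric $\Rightarrow$ constant curvature'' in the first branch) is incorrect: $S^2\times\R$ and $\H^2\times\R$ are complete, simply connected, symmetric, hence naturally reductive with $T=0$, and do not have constant curvature; your parenthetical appeal to ``the Einstein condition'' assumes something that is not given. The dichotomy that works (and that the paper uses, consistently with the standing assumption $T\neq0$ of this section) is Einstein versus non-Einstein: in dimension $3$, Einstein means constant curvature, which yields the space forms of case (1). Second, your assertion that ``the holonomy algebra is at most one-dimensional in the relevant subcase'' needs the paper's justification: $\Ric$ is $\nabla$-parallel and $\Ric^g-\Ric$ is determined by the parallel torsion, so $\nabla\Ric^g=0$; if $M^3$ is not Einstein, $\Ric^g$ is not a multiple of the identity, so $\k$ cannot be $\so(3)$, forcing $\dim\k\leq 1$. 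With these two repairs plus the transversal subalgebra, your outline becomes the paper's proof; the Milnor-type classification you anticipate as the main obstacle is then not needed at all.
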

\begin{proof}
%
Indeed, the Ambrose-Singer Theorem  tells us that $(M^3,g)$ is homogeneous 
and the Lie algebra of the group $G$ which acts transitively and effectively 
on $M^3$ is isomorphic to the direct sum $\g = \k \oplus \m$, where $\k$ is 
the holonomy algebra of $\nabla$ and $\m$ is the tangent space at some 
point. If $M^3$ is Einstein, it is isometric to $\R^3, S^3$, or $H^3$. If the
Riemannian 
Ricci tensor $\Ric^g$ has distinct eigenvalues, $\k$ is at most one-dimensional.
Indeed, $\Ric $ is $\nabla$-parallel, and the difference
$\Ric^g-\Ric$ is given by $T$, which is also $\nabla$-parallel. Hence $\nabla\Ric^g = 0$
and $\Ric^g$ is, by assumption, not a multiple of the idendity. Therefore  $\k$  cannot
coincide with $\so(3)$.
Let us apply the Nomizu construction in this situation 
(Appendix \ref{app.nomizu}). The one-dimensional Lie algebra $\k$ is generated
by the $2$-form $\Omega=:e_{12}$. The curvature operator is symmetric
(see Section \ref{conn-with-skew-torsion}) 
and consequently a multiple of the projection onto the
subalgebra $\k$, i.\,e.
\bdm
T\ =\ \lambda\ e_{123} \ \text{ and }\ \kr\ =\ \alpha \, e_{12}\odot e_{12}.
\edm
By the Nomizu construction, $e_1, e_2, e_3$, and $\Omega$ are a basis
of $\g$ with non-trivial Lie brackets
%
%
\bdm
[e_1,e_2]=-\alpha\Omega-\lambda e_3=:\tilde\Omega,\quad 
[e_1,e_3] = \lambda e_2,\quad [e_2,e_3]= -\lambda e_1,\quad
[\Omega, e_1]=e_2,\quad [\Omega, e_2] = -e_1.
\edm
The $3$-dimensional subspace $\h$ spanned by $e_1, e_2$, and $\tilde\Omega$
is a Lie subalgebra of $\g$ that is transversal to the isotropy algebra $\k$
(since $\lambda\neq 0$). Consequently, the corresponding simply connected
Lie group $H$ acts transitively on $M^3$, i.\,e.~$M^3$ is a Lie group with a
left-invariant metric. One checks that $\h$ has the commutator relations
\bdm
[e_1,e_2]= \tilde\Omega,\quad
[\tilde\Omega,e_1]= (\lambda^2-\alpha)e_2,\quad
[e_2,\tilde\Omega]= (\lambda^2-\alpha)e_1.
\edm
For $\alpha=\lambda^2$, this is the $3$-dimensional Heisenberg Lie algebra,
otherwise it is $\su(2)$ or $\sl(2,\R)$ depending on the sign of 
$\lambda^2-\alpha$.
\end{proof}
\section{The classification in dimension $4$}\label{subsec:dim-4}
%
The classification of naturally reductive spaces 
below appeared first in \cite{KV83}. 
The original proof used a 
much more involved curvature calculation. Here we present a much simpler proof.
An easy algebraic argument shows that $\sigma_T=0$
for any $3$-form $T$ in dimension $4$. Therefore:
\begin{thm}
Let $(M^4,g, T)$ be a complete, simply connected Riemannian $4$-manifold 
 with parallel skew torsion $T\neq 0$.
Then 
\begin{enumerate}
\item 
$V:=*T$ is a $\nabla^g$-parallel vector field.
\item
The Riemannian holonomy algebra $\hol^g$ is contained in 
$\so(3)$, and hence  $M^4$ is 
isometric to a product $N^3\x \R$, where $(N^3,g)$ is a
$3$-manifold with a parallel $3$-form $T$.
\end{enumerate}
\end{thm}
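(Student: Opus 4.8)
The plan is to exploit the special algebra of $3$-forms in dimension $4$: since $\dim\Lambda^3(\R^4)=4$ and the Hodge star $*\colon\Lambda^3(M^4)\to\Lambda^1(M^4)$ is an isometric isomorphism, the parallel $3$-form $T$ corresponds to a $1$-form $V:=*T$, equivalently a vector field. First I would observe that $\nabla$ (the metric connection with skew torsion defining $T$) commutes with the Hodge star, since $*$ is built from the metric and the orientation, both of which are $\nabla$-parallel. Hence $\nabla T=0$ immediately gives $\nabla V=0$, so $V$ is a $\nabla$-parallel vector field.

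To pass from $\nabla$-parallel to $\nabla^g$-parallel, I would invoke equation (\ref{par-2-form}), which states that any $\nabla$-parallel vector field $V$ satisfies $\langle\nabla^g_X V,Y\rangle=-T(X,V,Y)/2$. The key computation is that $T(X,V,-)=0$ for $V=*T$. This is the heart of the first claim: in dimension $4$, the contraction $V\haken T$ where $V=*T$ vanishes, because $*T\haken T$ is proportional to $*(T\wedge T)$ and $T\wedge T\in\Lambda^6(\R^4)=0$. More directly, writing $T=\lambda\,e_{123}$ in a suitable frame (using $\|T\|=\mathrm{const}$, as $\nabla^g\|T\|^2=2\langle\nabla^g T,T\rangle$ and $\nabla^g T=\tfrac12\sigma_T=0$ by (\ref{thm.first-cons}) since $\sigma_T=0$), one has $V=*T=\lambda\,e_4$, and then $V\haken T=\lambda\,e_4\haken(\lambda\,e_{123})=0$. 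Thus $\nabla^g_X V=0$ for all $X$, proving claim $(1)$.

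For claim $(2)$, a $\nabla^g$-parallel vector field $V$ is nowhere vanishing (its length is the constant $\|T\|\neq0$), so the Riemannian holonomy group fixes the line spanned by $V$ at each point; equivalently $\hol^g$ annihilates $V$ and therefore acts trivially on $\R V$, so $\hol^g\subset\so(V^\perp)\cong\so(3)$. Since $M^4$ is assumed complete and simply connected, the de Rham decomposition theorem applies: the $\nabla^g$-parallel orthogonal splitting $TM^4=\R V\oplus V^\perp$ integrates to a global Riemannian product $M^4\cong\R\times N^3$, where the $\R$-factor carries $V$ and $N^3$ is the integral leaf of $V^\perp$. That $T$ restricts to a parallel $3$-form on $N^3$ follows because $T=\lambda\,e_{123}$ lives entirely in $\Lambda^3(V^\perp)$ and is $\nabla^g$-parallel.

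The main obstacle is verifying $V\haken T=0$ cleanly; everything else is a direct application of (\ref{par-2-form}), de Rham's theorem, and the already-recorded fact $\sigma_T=0$ in dimension $4$. I would handle this either by the frame computation above or, more invariantly, by the identity $*(\alpha\haken T)=\pm\,\alpha\wedge *T$ for a $1$-form $\alpha$, specialized to $\alpha=V=*T$, which reduces $V\haken T$ to $*(*T\wedge *{*T})=\pm\,*(*T\wedge T)$ and hence to the vanishing of $T\wedge T$ in top-plus degree on a $4$-manifold. No genuine difficulty arises once the dimensional coincidence $\dim\Lambda^3=\dim\Lambda^1$ and the nilpotence $T\wedge T=0$ are exploited.
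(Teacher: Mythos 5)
Your argument is correct and is essentially the paper's own direct geometric proof: establish $V\haken T=0$, deduce $\nabla^g V=0$ from equation (\ref{par-2-form}), and finish with the holonomy principle and de Rham's theorem. One caveat: your ``more invariant'' verification at the end is garbled. The correct reduction is $V\haken T=\pm\,*\bigl(V^\flat\wedge *T\bigr)=\pm\,*\bigl(*T\wedge *T\bigr)=0$, which vanishes because the $1$-form $*T$ is wedged with \emph{itself}; the expression you arrive at instead, $\pm\,*(*T\wedge T)$, does \emph{not} vanish, since $*T\wedge T=-\|T\|^2\,\vol$, and the $6$-form $T\wedge T$ never legitimately enters the computation. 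This slip is harmless only because your frame computation ($T=\lambda\, e_{123}$, $V=\lambda\, e_4$, hence $V\haken T=0$) is correct and self-contained---it is the same observation as the paper's ``$V\haken(*V)=0$ for any vector field $V$''. Finally, your own parenthetical remark contains a shortcut you did not exploit: in dimension $4$ one always has $\sigma_T=0$, so equation (\ref{thm.first-cons}) gives $\nabla^g T=\tfrac12\sigma_T=0$ outright; since the Hodge star commutes with $\nabla^g$, claim $(1)$ follows immediately, with no contraction identity needed at all.
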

Since any $3$-form in dimension $4$ has a $1$-dimensional kernel, the result follows directly from Theorem \ref{thm.splitting-with-kernel}. However, 
we can also give a direct geometric proof.
\begin{proof}
Any vector field $V$ satisfies $V\haken (* V)=0$. But  $*V=T$, so 
$V\haken T=0$. $\nabla V=0$ is equivalent to
$\nabla^g_X V = - T(X,V,-)/2$, and by the previous observation,
the right hand side vanishes, so $\nabla^g_X V=0$. The vector field $V$
is complete, because it is a Killing vector field on a complete Riemannian
manifold.
Since the stabilizer of a vector field is $\SO(3)\x\{1\}$ (acting on 
the orthogonal complement of $V$), the holonomy claim follows at once
from the general holonomy principle. The isometric splitting is then a
consequence of de Rham's Theorem.
\end{proof}
Thus, Theorem \ref{thm.prod-nat-red} implies at once:
\begin{cor}[\cite{KV83}]
A $4$-dimensional simply connected naturally reductive Riemannian 
manifold with $T\neq 0$ is
isometric to a Riemannian product  $N^3\x \R$, where $N^3$
is a $3$-dimensional  naturally reductive Riemannian manifold.
\end{cor}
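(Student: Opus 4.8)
The plan is to obtain this corollary as an immediate consequence of the preceding theorem combined with the product characterization of natural reductivity in Theorem \ref{thm.prod-nat-red}. The first thing I would note is that a naturally reductive space fits the hypotheses of the previous theorem: by our definition it is a homogeneous Riemannian manifold $M=G/K$ carrying a metric connection $\nabla$ with skew torsion for which $\nabla T=\nabla\kr=0$, so in particular it has parallel skew torsion, and being homogeneous it is complete. Together with the standing simple-connectivity assumption, this places $(M^4,g,T)$ with $T\neq 0$ exactly within the scope of the theorem.

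First I would invoke that theorem to produce the isometric de Rham splitting $(M^4,g)\cong (N^3,g_1)\x(\R,g_{\mathrm{flat}})$, in which $V:=*T$ is the $\nabla^g$-parallel vector field spanning the flat $\R$-factor. Since $V\haken T=V\haken *V=0$, the torsion is entirely supported on the orthogonal complement $N^3$; as $T\neq 0$, the induced parallel $3$-form on $N^3$ is non-zero, so $N^3$ is itself a $3$-manifold with parallel skew torsion. The remaining step is to upgrade this purely \emph{Riemannian} splitting to one compatible with natural reductivity, and this is precisely the content of Theorem \ref{thm.prod-nat-red}: because $M^4=N^3\x\R$ is naturally reductive by hypothesis, the ``only if'' direction of that equivalence forces both factors to be naturally reductive. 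The Euclidean line is trivially naturally reductive, and we conclude that $N^3$ is a $3$-dimensional naturally reductive Riemannian manifold, as asserted.

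I do not expect a genuine obstacle, since the substance has already been carried out in the two results cited; the corollary is essentially a bookkeeping step. The only point deserving care is the logical direction in which Theorem \ref{thm.prod-nat-red} is used---one must apply that natural reductivity of the product \emph{implies} it for each factor, not the converse---and the verification that the abstract de Rham factor $N^3$ supplied by the previous theorem is indeed the manifold whose natural reductivity is being claimed. Both are immediate once the splitting $M^4\cong N^3\x\R$ is in hand.
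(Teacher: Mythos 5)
Your proposal is correct and follows exactly the paper's route: apply the preceding splitting theorem for $4$-manifolds with parallel skew torsion (noting that a naturally reductive space is homogeneous, hence complete, and has parallel skew torsion by definition) to get $M^4\cong N^3\x\R$, then use the ``only if'' direction of Theorem \ref{thm.prod-nat-red} to transfer natural reductivity to the factor $N^3$. The paper compresses this into the single phrase ``Thus, Theorem \ref{thm.prod-nat-red} implies at once,'' and your write-up supplies precisely the bookkeeping it leaves implicit.
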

%
%
\section{The classification in dimension $5$}
%
The following lemma follows immediately from the well-known normal
forms for $3$-forms in five dimensions.
\begin{lem}\label{lem.dim5-algebra}
Let $(M^5,g, T)$ be an orientable  
Riemannian $5$-manifold with parallel skew torsion $T\neq 0$. Then 
there exists a local orthonormal frame $e_1,\ldots,e_5$ such that
\be\label{dim5-torsion}
T\ =\ -(\vrho e_{125}+\lambda e_{345}), \quad
* T\ =\ -(\vrho e_{34}+\lambda e_{12}),\quad \sigma_T\ =\ \vrho\lambda e_{1234}
\ee
for two real constants $\vrho,\lambda$.
The isotropy group and its action on the tangent space at any $x\in M$
is:

\smallskip
\bdm
\ba{|l|l|l|l|} \hline
\text{Case A} &  \sigma_T= 0 & \vrho \lambda = 0&\Iso(T)= \SO(3)\x\SO(2) \\ \hline
\text{Case B.$1$} &  \sigma_T\neq 0 & \vrho \lambda \neq 0, \vrho\neq \lambda  &
\Iso(T)= \SO(2)\x\SO(2)\x\{1\} \\ \hline
\text{Case B.$2$} &  \sigma_T\neq 0 & \vrho \lambda \neq 0, \vrho=\lambda  &
\Iso(T)= \U(2)\x\{1\}  \\ \hline
\ea
\edm
\smallskip
\end{lem}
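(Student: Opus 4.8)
The plan is to reduce everything to linear algebra on a single tangent space $V = T_xM^5 \cong \R^5$, using the fact that since $\nabla T = 0$, the torsion form $T$ has constant pointwise algebraic type. First I would invoke the normal form for a generic $3$-form in five dimensions: any nonzero $T \in \Lambda^3(\R^5)$ can, by an orthogonal change of frame, be brought into the shape $T = -(\vrho\, e_{125} + \lambda\, e_{345})$ with $\vrho, \lambda \in \R$. This is the standard classification of alternating $3$-forms on a $5$-dimensional space (equivalently, by Hodge duality, the normal form of the $2$-form $*T$, which as an element of $\so(5)$ can be block-diagonalized into at most two $2\times 2$ rotation blocks plus a kernel). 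From $T$ I then read off $*T = -(\vrho\, e_{34} + \lambda\, e_{12})$ by a direct Hodge-star computation, and compute $\sigma_T$ from its definition $\sigma_T = \tfrac12\sum_i (e_i \haken T)\wedge(e_i \haken T)$. The only cross-term that survives in $\sigma_T$ comes from the overlap of the two monomials in the index $5$, giving $\sigma_T = \vrho\lambda\, e_{1234}$; all other contributions are forced to vanish by antisymmetry.

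Next I would determine the isotropy algebra $\iso(T) \subset \so(5)$ by a direct computation of which elements $A \in \so(5)$ annihilate $T$ under the induced action on $\Lambda^3$. The structure of the answer is dictated entirely by the pattern of the coefficients $\vrho, \lambda$, so the proof splits into exactly the three cases of the table according to whether $\vrho\lambda = 0$, $\vrho\lambda \neq 0$ with $\vrho \neq \lambda$, or $\vrho = \lambda \neq 0$. In Case A, one of the coefficients vanishes, so $T$ is (up to scale) a decomposable form supported on a $3$-dimensional subspace; its stabilizer is then $\SO(3)$ acting on that subspace times $\SO(2)$ acting on the complementary $e_4, e_5$ (or the analogous split), giving $\Iso(T) = \SO(3)\x\SO(2)$. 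In Cases B.$1$ and B.$2$ both planes $e_{12}$ and $e_{34}$ appear nontrivially in $*T$, so $e_5 = \ker T$ is fixed and the stabilizer sits inside $\SO(4)\x\{1\}$ acting on $\Lin(e_1,\ldots,e_4)$; it must preserve the $2$-form $\vrho\,e_{34} + \lambda\,e_{12}$ on that $\R^4$. When the two eigenvalues $\vrho, \lambda$ are distinct this $2$-form has two distinct eigenspaces, so the stabilizer is the diagonal $\SO(2)\x\SO(2)$ preserving each plane separately. When $\vrho = \lambda$, the $2$-form becomes (a multiple of) a K\"ahler form on $\R^4$, whose stabilizer in $\SO(4)$ is the unitary group $\U(2)$; this yields $\Iso(T) = \U(2)\x\{1\}$.

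The main obstacle, such as it is, lies in correctly identifying the stabilizer in the degenerate symmetric case $\vrho = \lambda$: one must verify that the enlargement from $\SO(2)\x\SO(2)$ to $\U(2)$ is genuine, i.e.\ that every element of $\U(2) = \{A \in \SO(4) : A\,\omega = \omega\}$ for the K\"ahler form $\omega = e_{12} + e_{34}$ really does fix $T$ and not merely $*T$. Since $*$ is an $\SO(5)$-equivariant isometry of exterior algebras, an element of $\SO(5)$ fixes $T$ if and only if it fixes $*T$; this equivalence is what lets me pass freely between the $3$-form and its dual $2$-form throughout, and it is the observation that makes the $\U(2)$ computation clean. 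I expect the eigenvalue-equality analysis and this equivariance argument to be the only points requiring genuine care; the remainder is bookkeeping with wedge products in a fixed frame.
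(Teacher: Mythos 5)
Your overall route is exactly the one the paper intends: the paper offers no proof beyond the remark that the lemma ``follows immediately from the well-known normal forms for $3$-forms in five dimensions,'' and your reduction via Hodge duality to the block normal form of the $2$-form $*T\in\so(5)$, the computation of $\sigma_T$, and the case-by-case stabilizer computation is precisely the argument being suppressed there. Your computations of $*T$ and $\sigma_T$ are correct (for $i\le 4$ the $2$-forms $e_i\haken T$ are decomposable, so only the $i=5$ term contributes to $\sigma_T$), and the $\SO(5)$-equivariance of $*$ is indeed the right tool for passing between $T$ and $*T$. (A minor slip: in Case A with $\lambda=0$ the complementary plane is $\Lin(e_3,e_4)$, not $\Lin(e_4,e_5)$; the vector $e_5$ always lies in the support of a nonzero $T$.)

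There is, however, one genuine gap, in Case B.1. You argue that when $\vrho\neq\lambda$ the $2$-form $\lambda e_{12}+\vrho e_{34}$ has two distinct eigenspaces, so its stabilizer is $\SO(2)\x\SO(2)$. This fails when $\vrho=-\lambda\neq 0$: the eigenvalues of the associated skew endomorphism are $\pm i\lambda$ and $\pm i\vrho$, and for $\vrho=-\lambda$ these two pairs coincide even though $\vrho\neq\lambda$. Indeed, $e_{12}-e_{34}$ is an anti-self-dual form on $\R^4$, and its stabilizer in $\SO(4)$ is a copy of $\U(2)$ (the unitary group of the complex structure compatible with the opposite orientation), not $\SO(2)\x\SO(2)$; so as written your Case B.1 conclusion is false on the locus $\vrho=-\lambda$. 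The repair is a sign normalization implicit in the paper's appeal to ``the'' normal form: the frame change $\diag(1,-1,1,1,-1)\in\SO(5)$ fixes $e_{125}$ and sends $e_{345}$ to $-e_{345}$, i.\,e.~it maps $(\vrho,\lambda)$ to $(\vrho,-\lambda)$ --- in odd dimensions a reflection of the $4$-plane can be compensated by $-1$ on the kernel line while staying inside $\SO(5)$. Hence one may always choose the frame so that $\vrho\lambda\geq 0$, and with this normalization $\vrho\neq\lambda$ really does force the eigenvalue pairs $\{\pm i\vrho\}$ and $\{\pm i\lambda\}$ to be disjoint, after which your argument goes through. The same orientation argument also closes a small step you left implicit, namely why the stabilizer in Cases B.1/B.2 fixes $e_5$ rather than merely the line $\R e_5$: an element with $Ae_5=-e_5$ would act on $\Lin(e_1,\ldots,e_4)$ with determinant $-1$ and would therefore negate $\omega\wedge\omega=2\vrho\lambda\, e_{1234}\neq 0$, which it must preserve.
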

The degenerate case A ($\sigma_T=0$) follows 
from Theorem \ref{thm.splitting-with-kernel}:
\begin{prop}[case A -- with parallel skew torsion]
Let $(M^5,g, T)$ be a complete, simply connected
Riemannian $5$-manifold with parallel skew torsion $T\neq 0$ 
such that $\sigma_T=0$. Then $M^5$ is isometric to 
a product $N^3\x N^2$, where $N^3$ is a  Riemannian $3$-manifold
with torsion $T=c\, dN^3 $, a multiple of the volume form, and $N^2$ is any
Riemannian  $2$-manifold.
\end{prop}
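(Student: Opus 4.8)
The plan is to reduce the statement to Theorem~\ref{thm.splitting-with-kernel} by showing that the torsion has a nontrivial kernel. From Lemma~\ref{lem.dim5-algebra} we are in Case A, so $\sigma_T = \vrho\lambda\, e_{1234} = 0$, which forces $\vrho\lambda = 0$; hence at least one of the two coefficients vanishes. Without loss of generality (the two summands play symmetric roles up to relabeling the frame), suppose $\lambda = 0$, so that $T = -\vrho\, e_{125}$ with $\vrho \neq 0$ (since $T \neq 0$). The key observation is then purely algebraic: the $3$-form $e_{125}$ depends only on the directions $e_1, e_2, e_5$, so every vector in the span of $e_3, e_4$ lies in $\ker T$. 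Concretely, $e_3 \haken T = e_4 \haken T = 0$, so $\ker T$ is at least $2$-dimensional and in particular nontrivial.

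With $\ker T \neq 0$ established, Theorem~\ref{thm.splitting-with-kernel} applies directly: since $(M^5,g,T)$ is complete and simply connected with parallel skew torsion, the distributions $\ker T$ and $(\ker T)^\perp$ are $\nabla^g$-parallel and integrable, and $M^5$ splits isometrically as a Riemannian product
\bdm
(M^5,g,T)\ =\ (M_1,g_1,T_1=0)\x (M_2,g_2,T_2),\quad \ker T_2 = \{0\}.
\edm
Here $T$ vanishes on the factor $M_1$ and has trivial kernel on $M_2$. From the normal form $T = -\vrho\, e_{125}$ one reads off that $\ker T$ is exactly $2$-dimensional (spanned by $e_3, e_4$) while $T$ genuinely involves $e_1, e_2, e_5$; hence $\dim M_1 = 2$ and $\dim M_2 = 3$. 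Setting $N^2 := M_1$ and $N^3 := M_2$ gives the desired factors.

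It remains to identify the torsion on $N^3$. On the $3$-dimensional factor $N^2 := M_2$ the restricted torsion $T_2 = -\vrho\, e_{125}$ is a nowhere-vanishing parallel $3$-form, hence (as already noted at the start of Section~\ref{subsec:dim-3} for any $3$-manifold with parallel torsion) it must be a constant multiple of the volume form $dN^3$; writing $c = -\vrho$ gives $T = c\, dN^3$. The factor $N^2$ carries no constraint beyond being a Riemannian $2$-manifold, since $T_1 = 0$ there. This yields the product description $M^5 \cong N^3 \x N^2$ claimed in the statement.

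I do not expect any serious obstacle here: the entire argument is a direct invocation of the earlier splitting theorem once the algebraic fact $\ker T \neq 0$ is extracted from the normal form. The only point requiring minor care is the bookkeeping of dimensions---confirming that $\sigma_T = 0$ in this dimension forces the torsion into the single-monomial form $e_{125}$ (up to relabeling), so that $\ker T$ has dimension exactly $2$ and the non-degenerate factor is genuinely $3$-dimensional rather than, say, all of $M^5$. This is immediate from the normal form in Lemma~\ref{lem.dim5-algebra} together with $T \neq 0$.
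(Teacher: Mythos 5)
Your proof is correct and takes exactly the same route as the paper: the paper's entire argument is the one-line observation that case A ($\sigma_T=0$, hence $\vrho\lambda=0$ in the normal form, hence $\ker T\neq 0$) follows from the kernel-splitting Theorem \ref{thm.splitting-with-kernel}, which is precisely what you spell out, including the dimension count and the identification of $T$ with a multiple of the volume form on the $3$-dimensional factor. The only blemish is a harmless typo: in your final paragraph you write ``$N^2 := M_2$'' where you mean $N^3 := M_2$, consistent with your earlier (correct) assignment of the factors.
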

Observing that the curvature of a surface is its Gaussian curvature,
Theorem \ref{thm.prod-nat-red} implies:
\begin{cor}[case A -- the naturally reductive case]
A simply connected naturally reductive Riemannian 
$5$-manifold with $T\neq 0,\ \sigma_T=0$ is
isometric to a Riemannian product  $N^3\x N^2$, where $N^3$
is a $3$-dimensional  naturally reductive Riemannian manifold and $N^2$
a surface of constant Gaussian curvature.
\end{cor}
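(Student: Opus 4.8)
The plan is to derive this corollary directly from the preceding Proposition (case A with parallel skew torsion) together with Theorem \ref{thm.prod-nat-red}, so that essentially all of the geometric work has already been carried out. First I would note that a naturally reductive space is, in particular, a complete simply connected manifold with parallel skew torsion---completeness follows from homogeneity. Hence the hypotheses $T\neq 0$ and $\sigma_T=0$ put us exactly in the setting of the Proposition, yielding an isometric splitting $M^5=N^3\x N^2$ in which the torsion is supported entirely on the three-dimensional factor, $T=c\, dN^3$, while $N^2$ carries no torsion at all.

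Next I would invoke Theorem \ref{thm.prod-nat-red}: since the Riemannian product $N^3\x N^2$ is naturally reductive by hypothesis, both factors are individually naturally reductive. This identifies $N^3$ as a three-dimensional naturally reductive Riemannian manifold with no further argument, and it remains only to understand what natural reductivity forces on the surface factor $N^2$.

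This last point is the only step with genuine content, and it is short. In dimension two one has $\Lambda^3(N^2)=0$, so the only metric connection with skew torsion is the Levi-Civita connection itself; thus the connection making $N^2$ naturally reductive is $\nabla^g$, and natural reductivity reduces to the requirement that the Riemannian curvature be $\nabla^g$-parallel. Since the curvature of a surface is completely determined by its Gaussian curvature $K$, the condition $\nabla^g\kr=0$ is equivalent to $K$ being constant. Therefore $N^2$ is a surface of constant Gaussian curvature, which completes the proof. I do not anticipate any real obstacle: the substantive splitting is supplied by the Proposition and Theorem \ref{thm.prod-nat-red}, and the characterization of naturally reductive surfaces as space forms is elementary once one observes that torsion vanishes identically in dimension two.
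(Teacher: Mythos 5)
Your proof is correct and follows the paper's own route exactly: apply the case-A Proposition to get the splitting $M^5 = N^3 \times N^2$, then use Theorem~\ref{thm.prod-nat-red} to make both factors naturally reductive, and note that on a surface the only skew-torsion connection is $\nabla^g$, so natural reductivity forces constant Gaussian curvature. Your write-up merely makes explicit the completeness-from-homogeneity step and the two-dimensional observation that the paper compresses into a single sentence.
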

Recall that a Sasakian manifold  always has parallel characteristic torsion,
and that many non-homogeneous Sasakian manifolds are known
(in all odd dimensions). Consequently, a classification of $5$-manifolds
with parallel skew torsion is not possible for  $\lambda=\vrho$ (case B.2).
In contrast to this, our next result shows that   $5$-manifolds with parallel 
skew torsion and $\lambda \neq\vrho$ (case B.1) are necessarily naturally
reductive and can be completely described:
\begin{thm}[case B.1 -- with parallel skew torsion]\label{b1-immer-natred}
Let $(M^5,g, T)$ be an orientable  
Riemannian $5$-manifold with parallel skew torsion and such that 
$T$ has the normal form
\bdm
T\ =\ -(\vrho e_{125}+\lambda e_{345}), \quad
\vrho\lambda \neq 0 \text{ and } \vrho\ \neq\ \lambda.
\edm
Then $\nabla \kr =0$.
The admissible torsion forms and curvature operators
depend on $4$ parameters. Moreover, if $M^5$ is complete, then it is 
locally naturally reductive.
\end{thm}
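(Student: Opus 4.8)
The plan is to set up the Nomizu construction explicitly in Case B.1 and show that the symmetric curvature operator is automatically holonomy-equivariant. First I would fix a point $p\in M^5$, set $V:=T_pM^5$, and use Lemma \ref{lem.dim5-algebra} to write $T = -(\vrho e_{125}+\lambda e_{345})$ with $\vrho\lambda\neq 0$ and $\vrho\neq\lambda$. The key structural input is that, since $\nabla T=0$, the holonomy algebra satisfies $\hol^\nabla\subset\iso(T)=\so(2)\oplus\so(2)\oplus\{1\}$, which is abelian of dimension at most $2$. I would denote the two commuting generators by $\omega_1:=e_{12}$ and $\omega_2:=e_{34}$ (the two $\so(2)$ factors), noting $*T=-(\vrho\, e_{34}+\lambda\, e_{12})$ lies in their span.

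Next I would analyze the symmetric curvature operator $\kr=\psi\circ\pi$, where $\pi:\Lambda^2\to\hol^\nabla$ is the orthogonal projection and $\psi:\hol^\nabla\to\hol^\nabla$ is symmetric. The heart of the matter is to show $\nabla\kr=0$. By the general holonomy principle, this is equivalent to $\kr$ being $\hol^\nabla$-equivariant, and since $\pi$ is always equivariant, it suffices that $\psi$ be $\hol^\nabla$-equivariant. Here I would invoke exactly the mechanism of Proposition \ref{prop.sigma1}(3): because $\hol^\nabla\subset\iso(T)$ is abelian, it acts on itself by the adjoint (bracket) action, which is trivial; hence \emph{any} symmetric endomorphism $\psi$ of $\hol^\nabla$ is trivially equivariant, and therefore so is the composite $\kr$. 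This yields $\nabla\kr=0$ without further computation, establishing the first and last claims up to the explicit parametrization.

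To extract the parameter count and the local natural reductivity, I would then feed the data back through the Nomizu construction (Appendix \ref{app.nomizu}). The first Bianchi identity (\ref{Bianchi-I}) forces $\cyclic{X,Y,Z}\kr(X,Y,Z,V)=\sigma_T(X,Y,Z,V)=\vrho\lambda\, e_{1234}$, while the symmetry of $\kr$ and the constraint $\hol^\nabla\subset\Lin\{\omega_1,\omega_2\}$ reduce $\kr$ to a symmetric $2\times2$ block $\psi$ on $\Lin\{\omega_1,\omega_2\}$, i.e.\ three independent entries; together with the two torsion parameters $\vrho,\lambda$ (one of which may be normalized, leaving the stated total of $4$ free parameters after accounting for the Bianchi constraint linking one curvature entry to $\vrho\lambda$), this gives the claimed dimension count. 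Since $\nabla T=\nabla\kr=0$ and $M^5$ is complete and simply connected, Tricerri's theorem (quoted in the introduction) promotes the infinitesimal model to a genuine homogeneous naturally reductive structure, so $M^5$ is locally naturally reductive.

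The main obstacle I anticipate is not the equivariance argument—which is essentially free once abelianness of $\iso(T)$ is in hand—but rather verifying that the second Bianchi identity (\ref{Bianchi-II}), $\cyclic{X,Y,Z}\kr(T(X,Y),Z)=0$, is compatible with the full $3$-parameter family of symmetric operators $\psi$, and pinning down precisely which combinations of curvature entries survive as independent parameters. Checking that this algebraic relation does not over-constrain the system (and conversely accounts correctly for the reduction from five naive parameters to four) is the delicate bookkeeping step; I would carry it out by expanding both Bianchi identities in the Clifford-algebra formulation of Theorem \ref{Bianchi-Clifford}, where the interplay between $T$, $\sigma_T$, and $\kr$ becomes a single tractable identity.
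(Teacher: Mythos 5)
Your proposal is correct and follows essentially the same route as the paper: the abelianness of $\hol^\nabla\subset\iso(T)=\so(2)\oplus\so(2)$ gives $\nabla\kr=0$ via Proposition \ref{prop.sigma1}(3), and the Clifford-algebra Bianchi identity (Theorem \ref{Bianchi-Clifford}) cuts the five naive parameters $(\vrho,\lambda,a,b,c)$ down to four by forcing the mixed curvature coefficient to equal $2\lambda\vrho$. Your anticipated obstacle concerning the second Bianchi identity is in fact vacuous: Theorem \ref{Bianchi-Clifford} states that it holds automatically whenever $\kr$ is symmetric and $T$ is $\h$-invariant, so no further bookkeeping is required there.
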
 
\begin{proof}
The curvature transformation
is a symmetric endomorphism $\kr:\ \Lambda^2 (M^5)\ra \Lambda^2 (M^5) $
(see Section \ref{conn-with-skew-torsion}).
We consider the $2$-forms $\Omega_1:=e_{12}$ and  $\Omega_2 :=e_{34}$,
of which we know that they are closed and $\nabla$-parallel
(see the proof  of Theorem \ref{thm-B-contact}). We split 
\bdm
\so(5)\ \cong\ \Lambda^2(M^5)\ =\ \Lin(\Omega_1,\Omega_2)\oplus 
\Lin(\Omega_1,\Omega_2)^\perp\ =: \ \h\oplus \h^\perp, 
\edm
and observe that the first summand is not only a subspace, 
but a two-dimensional 
abelian subalgebra $\h\cong \so(2)\oplus \so(2)$ -- in fact, by 
Lemma \ref{lem.dim5-algebra},  $\hol^{\nabla}\subset \h$.
This last property  implies that the curvature $\kr$ has image inside
$\h$. In particular, it means that $\hol^{\nabla}$ is abelian too and
therefore $\nabla\kr=0$ by Proposition \ref{prop.sigma1} (3).
Hence, there exist constants $a,b,c$ such that
\bdm
\kr\ =\ a\, \Omega_1\odot \Omega_1 + b\, \Omega_1\odot \Omega_2+
c\, \Omega_2\odot\Omega_2.
\edm
One sees that inside the Clifford algebra,
$(\kr + T^2)_4 =(b -2\lambda\vrho)\, e_1 e_2 e_3 e_4$, hence
the  first Bianchi identity in the Clifford version 
(Theorem \ref{Bianchi-Clifford}) yields $b=2\lambda\vrho$.
\end{proof}
\begin{NB}
A routine computation shows that the Ricci tensor of $\nabla$ is then
given by
\bdm
\Ric\ =\ \diag(-a,-a,-c,-c,0).
\edm
In particular, $a=c=0$ yields a $\nabla$-Ricci-flat space.
\end{NB}
We shall now describe explicitly the naturally reductive spaces 
and the group $G$ acting transitively on them covered by the previous
theorem. As a vector space, its Lie algebra $\g$ is given
by the Nomizu construction as $\g=\h\oplus \R^5$ with basis
$\Omega_1, \Omega_2, e_1, \ldots, e_5$. Using the explicit formulas
for $T$ and $\kr$, one computes the commutator relations
\begin{gather*}
[e_1,e_2]\, =\, - a\Omega_1-\lambda\vrho\,\Omega_2+\vrho\, e_5\, =:\,
\tilde\Omega_1,\quad 
[e_3,e_4]\, =\, - \lambda\vrho\,\Omega_1-c\,\Omega_2+\lambda\, e_5\, =:\,
\tilde\Omega_2,\\
[e_1,e_5]\, =\, -\vrho\, e_2,\quad 
[e_2,e_5]\, =\, \vrho\, e_1,\quad
[e_3,e_5]\, =\, -\lambda\, e_4,\quad
[e_4,e_5]\, =\, \lambda \, e_3,\\
[\Omega_1, e_1]\, = \, +e_2,\quad
[\Omega_1, e_2]\, = \, -e_1,\quad
[\Omega_2, e_3]\, = \, +e_4,\quad
[\Omega_2, e_4]\, = \, -e_3.
\end{gather*}
The crucial observation is that the linear space spanned by $e_1, e_2, e_3,
e_4,\tilde\Omega_1, \tilde\Omega_2$ is a Lie subalgebra that we will denote
by $\g_1$. Its non-vanishing Lie brackets are
\begin{gather*}
[e_1,e_2]\, =\,\tilde\Omega_1,\quad
[\tilde\Omega_1,e_1]\, =\, (\vrho^2-a)\, e_2,\quad
[e_2, \tilde\Omega_1]\, =\, (\vrho^2-a)\, e_1,\\ 
[e_3,e_4]\, =\,\tilde\Omega_2,\quad
[\tilde\Omega_2,e_3]\, =\, (\lambda^2-c)\, e_4,\quad
[e_4, \tilde\Omega_2]\, =\, (\lambda^2-c)\, e_3.
\end{gather*}
Define
\bdm
h_1\ := \ \lambda\, \tilde\Omega_1-\vrho\,\tilde\Omega_2\ =\ 
\lambda(\vrho^2-a)\,\Omega_1+ \vrho (c-\lambda^2)\, \Omega_2.
\edm
There are two cases to consider.
If $a=\vrho^2$ and $c=\lambda^2$, $\tilde\Omega_1$ and $\tilde\Omega_2$
are proportional, therefore $\g_1$ is $5$-dimensional, isomorphic to the
$5$-dimensional Heisenberg algebra, and $\g_1\cap \h$ is
trivial (since $\lambda \neq 0$). We conclude that the corresponding
Heisenberg group $H^5$ acts transitively on the $5$-manifold $M$.
Consequently, $(M,g)$ is isometric to a left-invariant metric 
on $H^5$, and the metric depends on two parameters.  

In the second case, either $a\neq\vrho^2$ or $c\neq\lambda^2$.
The intersection $\g_1\cap \h =:\h_1$ is $1$-dimensional and generated by
the element $h_1\neq 0$.
By the commutator equations above,
the Lie algebra $\g_1$ is a direct sum of two $3$-dimensional ideals,
and the last formula describes how $\h_1$ is embedded in this direct
sum decomposition of $\g_1$. If we assume that the coefficients are chosen
in such a way that $h_1$ generates a \emph{closed} subgroup of $G_1$,
we can assert $M$ is isometric to $G/H\cong G_1/H_1$.
Furthermore, each of these ideals is isomorphic to $\so(3), \sl(2,\R)$, 
or to the
$3$-dimensional Heisenberg algebra, depending on the value
of the constants $\vrho^2-a$ and $c-\lambda^2$. 
We emphasize that the case of two
$3$-dimensional Heisenberg algebras is excluded: it would
correspond to the vanishing of both constants, which was
discussed earlier. 
We summarize this discussion in the following theorem:
\begin{thm}[case B.1 -- classification]\label{thm.B1-classification}
A complete, simply connected Riemannian $5$-manifold  $(M^5,g, T)$ 
with parallel 
skew torsion $T$ such that
$T = -(\vrho e_{125}+\lambda e_{345})$ 
with $\vrho\lambda \neq 0, \vrho\ \neq\ \lambda$ is one of the
following homogeneous spaces:
\begin{enumerate}
\item The $5$-dimensional Heisenberg group $H^5$  with a two-parameter 
family of left-invariant metrics (described in Section 
\ref{odd-Heisenberg}),
\item A manifold of  type $(G_1\x G_2)/\SO(2)$ where
$G_1$ and $G_2$ are either $\SU(2)$,  $\widetilde{\SL(2,\R)}$, or $H^3$,
but not both equal to $H^3$ 
with one parameter $r\in \Q$ classifying the embedding of $\SO(2)$
and  a three-parameter family of homogeneous metrics (described 
in Section \ref{Stiefel-type}).
\end{enumerate}
\end{thm}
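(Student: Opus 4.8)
The plan is to read off the answer from the Nomizu data already assembled before the statement. By Theorem~\ref{b1-immer-natred} the space is naturally reductive with $\nabla\kr=0$, so the Nomizu construction (Appendix~\ref{app.nomizu}) realizes a transitive isometry Lie algebra as $\g=\h\oplus\R^5$ with the commutators listed above, the isotropy being $\h\cong\so(2)\oplus\so(2)=\Lin(\Omega_1,\Omega_2)$. The idea is to shrink $\g$ to a subalgebra that still acts transitively and whose isomorphism type is transparent. First I would check that $\g_1:=\Lin(e_1,e_2,e_3,e_4,\tilde\Omega_1,\tilde\Omega_2)$ is a subalgebra (this is exactly the bracket table) and that $\g_1+\h=\g$; then transitivity of the $G$-action forces the projection $\g_1\to\g/\h\cong T_pM^5$ to be onto, so the connected subgroup $G_1$ already acts transitively and $M^5\cong G_1/H_1$ with isotropy algebra $\h_1=\g_1\cap\h$.

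The core is a dichotomy governed by the two numbers $\vrho^2-a$ and $\lambda^2-c$. If both vanish, the relations collapse $\tilde\Omega_1,\tilde\Omega_2$ into the center and $\g_1$ becomes $5$-dimensional and two-step nilpotent, i.e.\ the Heisenberg algebra; since $\lambda\neq0$ gives $\g_1\cap\h=\{0\}$, the Heisenberg group $H^5$ acts simply transitively, which is case~(1). Otherwise $\g_1$ is genuinely $6$-dimensional and, by the bracket table, splits as an orthogonal sum of two $3$-dimensional ideals $\mathfrak{i}_1\oplus\mathfrak{i}_2$, each carrying relations of the form $[e_i,e_j]=\tilde\Omega$, $[\tilde\Omega,e_i]=k\,e_j$ with $k=\vrho^2-a$ resp.\ $\lambda^2-c$. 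This is precisely the trichotomy of Theorem~\ref{thm.class-dim3}: each ideal is $\su(2)$, $\sl(2,\R)$, or $\h^3$ according to the sign of the relevant constant. The isotropy $\h_1$ is then $1$-dimensional, generated by $h_1=\lambda(\vrho^2-a)\,\Omega_1+\vrho(c-\lambda^2)\,\Omega_2$, which lies diagonally in the maximal torus $\so(2)\oplus\so(2)$ of $\mathfrak{i}_1\oplus\mathfrak{i}_2$, yielding $M^5\cong(G_1\times G_2)/\SO(2)$. The double-Heisenberg possibility is automatically absent, since $\vrho^2-a=\lambda^2-c=0$ is exactly the case~(1) already treated; hence at least one factor is non-Heisenberg, i.e.\ not both are $H^3$.

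The hard part is the global integration rather than the Lie-algebra bookkeeping. Two issues must be settled. First, I must ensure that $\exp(\R h_1)$ is \emph{closed} in $G_1\times G_2$: as a line inside the torus $\SO(2)\times\SO(2)$ it gives a compact $\SO(2)$ isotropy, and hence a Hausdorff quotient, precisely when the ratio of the coefficients $\lambda(\vrho^2-a)$ and $\vrho(c-\lambda^2)$ is rational; this is the source of the discrete parameter $r\in\Q$ classifying the embedding. Second, I must track the invariant metric: after the Bianchi constraint $b=2\lambda\vrho$ from Theorem~\ref{b1-immer-natred}, the remaining freedom sits in $a,c,\vrho,\lambda$, and reducing modulo the normalizations that preserve the normal form of $T$ leaves the stated three-parameter family of homogeneous metrics. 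I would conclude by invoking completeness and simple connectivity to promote the local identification to a global isometry $M^5\cong(G_1\times G_2)/\SO(2)$ with the two factors drawn from $\SU(2)$, $\widetilde{\SL(2,\R)}$, and $H^3$, not both $H^3$.
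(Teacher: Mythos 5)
Your proposal is correct and follows essentially the same route as the paper: the Nomizu construction for $\g=\h\oplus\R^5$, the passage to the subalgebra $\g_1=\Lin(e_1,e_2,e_3,e_4,\tilde\Omega_1,\tilde\Omega_2)$ transversal to $\h$, the dichotomy governed by $\vrho^2-a$ and $\lambda^2-c$ (Heisenberg $H^5$ when both vanish, otherwise a sum of two $3$-dimensional ideals each isomorphic to $\su(2)$, $\sl(2,\R)$, or $\h^3$, with $1$-dimensional isotropy generated by $h_1$), and the closedness/rationality condition producing $r\in\Q$. The only cosmetic slip is attributing surjectivity of $\g_1\to\g/\h$ to "transitivity of the $G$-action" — it is simply the direct check $\g_1+\h=\g$ (which you also state), since $\tilde\Omega_1\equiv\vrho\,e_5$ mod $\h$ with $\vrho\neq0$.
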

\begin{NB}\label{NBIsotropy}
Theorem \ref{thm.B1-classification} gives an isometric description with a 
one-dimensional isotropy group of the homogeneous spaces under consideration.
The description as naturally reductive spaces uses a two-dimensional
isotropy group. 
\end{NB}
Let us now discuss the case B.2, i.\,e. $\lambda=\vrho$.
As observed before, there exist manifolds of type B.2
that have parallel skew torsion but that are not naturally reductive -- hence,
these cannot be classified.
\begin{thm}[case B.2 -- classification]\label{n=5EWgleich}
Let $(M^5,g,T)$ be a complete, simply connected  naturally reductive
homogeneous $5$-space such that $T = -(\vrho e_{125}+\lambda e_{345})$ 
with $\vrho\lambda \neq 0, \vrho\ = \lambda$. 
Then $M^5$ is either isometric to one of the spaces discussed in
Theorem $\ref{thm.B1-classification}$,
or to $\SU(3)/\SU(2)$ or $\SU(2,1)/\SU(2)$; in the last two
cases, the family of metrics depends on two parameters
(described in Section \ref{berger}).
\end{thm}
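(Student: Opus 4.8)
The plan is to set up the Nomizu construction exactly as in the case B.1 proof (Theorem \ref{thm.B1-classification}), since the algebraic starting point ($T=-(\vrho e_{125}+\lambda e_{345})$ with $\vrho=\lambda$) is the degenerate limit of that case. First I would record that in case B.2 the isotropy algebra is $\iso(T)=\un(2)\oplus\{0\}$, which is strictly larger than the $\so(2)\oplus\so(2)$ of case B.1; the extra generators are $\Omega_3:=e_{13}+e_{24}$ and $\Omega_4:=e_{14}-e_{23}$ (together with $\Omega_1=e_{12}$, $\Omega_2=e_{34}$) spanning $\un(2)$. The essential new phenomenon is therefore that $\hol^\nabla\subset\un(2)$ need no longer be abelian, so Proposition \ref{prop.sigma1}(3) does not directly apply, and one must genuinely use the \emph{naturally reductive} hypothesis ($\nabla\kr=0$) rather than derive it.

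Next I would exploit $\nabla\kr=0$ and the fact that $\kr=\psi\circ\pi$ with $\psi$ an $\hol^\nabla$-equivariant symmetric endomorphism of $\hol^\nabla$. Since $\hol^\nabla\subset\un(2)\cong\so(2)\oplus\su(2)$, I would decompose $\kr$ into the piece supported on the abelian part (the $\Omega_1+\Omega_2$ direction, i.\,e.~the $\so(2)$-center of $\un(2)$) and the piece on $\su(2)$. Equivariance forces $\psi$ to act as a scalar on each irreducible summand of $\hol^\nabla$, so the curvature operator is essentially determined by two parameters---one for the central $\U(1)$ factor and one for the $\SU(2)$ factor. I would then feed this ansatz into the Clifford-algebra form of the first Bianchi identity (Theorem \ref{Bianchi-Clifford}) to pin down the relation between the curvature constants and $\vrho^2$, as was done to obtain $b=2\lambda\vrho$ in the B.1 proof. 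This is where the quasi-Sasakian/$\alpha$-Sasakian structure of the introduction enters: the Reeb direction is $e_5=*\sigma_T$ up to scale, and $\Omega_1+\Omega_2$ is the associated K\"ahler form on the contact distribution.

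With the curvature and torsion thus parametrized, I would run the Nomizu construction to produce $\g=\hol^\nabla\oplus\R^5$ and compute the brackets. The key step is to identify a transversal Lie subalgebra $\g_1$ complementary to the isotropy, giving the transitive group. When $\hol^\nabla$ is abelian (lands in the center), the brackets coincide with those of case B.1 and the space is already on the list of Theorem \ref{thm.B1-classification}. The genuinely new family arises when $\hol^\nabla$ contains the full $\su(2)$: then $\g$ is a $6$-dimensional Lie algebra and I would recognize it, by the signature of its Killing form (controlled by the sign of the curvature constant on $\su(2)$), as $\su(3)$ or $\su(2,1)$, with $M^5=\SU(3)/\SU(2)$ (the Aloff--Wallach/Berger sphere family) or $\SU(2,1)/\SU(2)$. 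The two remaining free parameters are the overall torsion scale $\vrho$ and the curvature constant distinguishing the metric within the Berger family.

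The main obstacle I anticipate is the intermediate case---showing there is no genuinely \emph{new} homogeneous space when $\hol^\nabla$ is a proper subalgebra strictly between the center and all of $\un(2)$. Concretely, one must verify that any $\hol^\nabla$-equivariant symmetric $\psi$ compatible with Bianchi either collapses $\kr$ back into the abelian case (recovering Theorem \ref{thm.B1-classification}) or forces the full $\su(2)$ to act (yielding the Berger spaces), with nothing in between. I expect this to follow from the representation theory of $\un(2)$ acting on $\Lambda^2(\R^4)$: the equivariance constraint leaves so few invariants that the case analysis closes, but making the integrality/closedness of the resulting subgroup $H_1$ precise---so that $M$ is a genuine homogeneous quotient rather than merely a local model---is the delicate point, exactly as flagged in Remark \ref{NBIsotropy} for the analogous B.1 situation.
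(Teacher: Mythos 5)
Your proposal follows the paper's proof essentially step for step: the dichotomy $\hol^\nabla\subset\so(2)\oplus\so(2)$ (reduction to case B.1) versus $\su(2)\subset\hol^\nabla\subset\un(2)$, the Schur-type argument forcing $\psi=a\,\Id_{\su(2)}\oplus b\,\Id_{\R}$ on $\un(2)=\su(2)\oplus\R\cdot\omega$ (two curvature constants), the Clifford--Bianchi identity of Theorem \ref{Bianchi-Clifford} fixing one relation (in the paper, $b-3a=\vrho^2$), and the Nomizu construction identifying the new spaces as $\SU(3)/\SU(2)$ and $\SU(2,1)/\SU(2)$. The logical structure and the conclusion are correct.

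Three slips should be fixed before this stands as a proof. First, when $\su(2)\subset\hol^\nabla$, the Nomizu algebra $\g=\hol^\nabla\oplus\R^5$ is $8$-dimensional (or $9$-dimensional if $\hol^\nabla=\un(2)$), not $6$-dimensional; consistently, $\dim\su(3)=\dim\su(2,1)=8$. Moreover, in this case your announced ``key step'' of finding a Lie subalgebra transversal to the isotropy cannot be carried out: $\su(3)$ has no $5$-dimensional subalgebra (the Berger sphere is not a Lie group), so unlike case B.1 one must recognize the full Nomizu algebra $\g$ itself, via its Killing form, and take $M=G/H$ with $H$ the connected subgroup whose Lie algebra is $\hol^\nabla$ --- which is what you in fact do in the next clause. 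Second, ``abelian'' is not ``central'': abelian subalgebras of $\un(2)$ are conjugate into the maximal torus $\so(2)\oplus\so(2)=\Lin(e_{12},e_{34})$, which is $2$-dimensional and strictly larger than the center, and it is exactly this containment (not centrality) that triggers the reduction to Theorem \ref{thm.B1-classification}. Third, the ``intermediate case'' you worry about is vacuous for an elementary reason rather than a representation-theoretic one: for any subalgebra $\h\subset\un(2)=\su(2)\oplus\R$, the projection of $\h$ to $\su(2)$ is either a proper (hence abelian) subalgebra of $\su(2)$, in which case $\h$ lies in a maximal torus and is abelian, or it is all of $\su(2)$, in which case $[\h,\h]=\su(2)\subset\h$; so every possible holonomy algebra is either abelian or contains $\su(2)$, which is precisely the dichotomy the paper states without further comment.
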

\begin{proof}
By assumption, 
 $\iso(T)=\un(2)$, hence the holonomy algebra $\hol^\nabla$ of $\nabla$
is a subalgebra of $\un(2)$. If $\hol^\nabla\subset \so(2)\oplus \so(2)$, we
are in the situation B.1 discussed previously. If not, it has to contain
$\su(2)$, i.\,e.~$\su(2)\subset \hol^\nabla \subset \un(2)$. 
The curvature operator is a symmetric, $\hol^\nabla$-invariant operator,
hence $\kr=\psi\circ \pi_{\un(2)}$ with a $\hol^\nabla$-equivariant map
$\psi:\ \un(2)\ra\un(2)$.
Since $\un(2)$ decomposes into $\su(2)\oplus \R\cdot\omega$ ($\omega$
the central element),  $\psi= a\, \Id_{\su(2)}\oplus b\,\Id_{\R}$ for
some real constants $a, b$, which in an explicit choice of basis
means (the squares denote symmetric tensor powers)
\be\label{kr.n=5EWgleich}
\kr\ =\ a [(e_{13}+ e_{24})^2 +
(e_{14} - e_{23})^2 + 
(e_{12}- e_{34})^2]
+ b \, (e_{12}+ e_{34})^2.
\ee
The  first Bianchi identity in the Clifford version 
(Theorem \ref{Bianchi-Clifford}) yields then $b - 3 a =\vrho^2$.
A routine computation shows then that $M$ is isometric to $\SU(3)/\SU(2)$
or its non-compact dual $\SU(2,1)/\SU(2)$.
\end{proof}
\begin{NB}
The classification of $5$-dimensional naturally reductive homogeneous spaces  
was obtained by O. Kowalski and L. 
Vanhecke in 1985 using other methods, see \cite{KV85}.
\end{NB}
Let us look from another point of view at $5$-dimensional Riemannian 
manifolds with parallel skew torsion and $\sigma_T \not= 0$.
The torsion induces
a canonical metric almost contact structure, i.\,e.
a (1,1)-tensor $\vphi:TM^5 \ra TM^5$, and $1$-form $\eta$ with dual vector 
field $\xi$ of length one such that
\be\label{contact-condition}
\vphi^2=-\Id+\eta\otimes\xi~~~~~ \mbox{ and } ~~~~~g(\vphi V,\vphi W)
=g(V,W)-\eta(V)\eta(W).
\ee
The fundamental form is then defined by $F(X,Y):= g(X,\vphi Y)$.
Moreover, a Nijenhuis tensor $N$ is defined by an expression very
similar to the one known from almost complex structures.  
A manifold with a metric almost contact structure is 
called 
\begin{enumerate}
\item
a \emph{quasi-Sasakian manifold} if $N = 0$  and $dF = 0$,
\item
an \emph{$\alpha$-Sasakian manifold} if $N = 0$  and $d\eta = \alpha F$.
The Sasaki case corresponds to $\alpha=2$. 
\end{enumerate}
%
%
\begin{thm}[case B -- induced contact structure]\label{thm-B-contact}
Let $(M^5,g,T)$ be an orientable  
Riemannian $5$-manifold with parallel skew torsion $T$ such that 
$\sigma_T\neq 0$. Then $M$ is a quasi-Sasakian manifold and $\nabla$ is its
characteristic connection. The quasi-Sasakian structure is $\alpha$-Sasakian
if and only if $\lambda=\vrho$ (case B.$2$), and it is 
Sasakian if $\lambda=\vrho=2$.
\end{thm}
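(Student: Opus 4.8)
The plan is to construct the almost contact structure explicitly from the normal form of $T$, then verify the two defining conditions $N=0$ and $dF=0$ using the parallel-torsion identities from Section~\ref{conn-with-skew-torsion}.

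First I would set up the candidate structure. By Lemma~\ref{lem.dim5-algebra}, after choosing an adapted frame we have $*T = -(\vrho e_{34}+\lambda e_{12})$, and $\sigma_T = \vrho\lambda\, e_{1234}\neq 0$ forces $\vrho\lambda\neq 0$. The natural choice is to take $\xi := e_5$ (equivalently $\eta := e_5$), since $e_5$ is singled out as the common factor in both terms of $T$. The fundamental form $F$ should be built from the $e_{12}$ and $e_{34}$ blocks, essentially $F = \mu(e_{12}+e_{34})$ for a suitable normalizing constant $\mu$, and $\vphi$ is the associated rotation by $\pi/2$ in each of the two planes $\mathrm{span}(e_1,e_2)$ and $\mathrm{span}(e_3,e_4)$, annihilating $e_5$. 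I would then check the compatibility relations \eqref{contact-condition} by direct inspection, which is routine since $\vphi$ is block-diagonal.

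The core of the proof is to show $\nabla$ is the characteristic connection, i.e.\ $\nabla$ is a $\U(2)$-connection with skew torsion $T$, and then to extract $N=0$ and $dF=0$. Since $T$ is parallel, its holonomy algebra satisfies $\hol^\nabla\subset\iso(T)$, and from Lemma~\ref{lem.dim5-algebra} one reads off $\iso(T)\subset\so(4)\oplus\{1\}$ preserving the two planes; this already gives $\nabla\vphi=\nabla\eta=\nabla\xi=0$, so the structure is $\nabla$-parallel. The key computation is then $dF$: using the derivative formula \eqref{par-2-form} for a $\nabla$-parallel $2$-form, $dF = \sum_i (e_i\haken F)\wedge(e_i\haken T)$, and I would verify that with the above choices this evaluates to a multiple of $\sigma_T$ — in fact, the structure of $*T$ means $F$ pairs with $T$ exactly so that $dF$ is proportional to $e_{1234}=\sigma_T/(\vrho\lambda)$, but the quasi-Sasakian condition requires $dF=0$, so the constant must work out to cancel. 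Here I expect the normalization of $\mu$ relative to $\vrho,\lambda$ to be dictated precisely by forcing $dF=0$. The vanishing of the Nijenhuis tensor $N$ should follow from $\nabla\vphi=0$ together with the skew-symmetry of $T$, via the standard identity expressing $N$ through the characteristic torsion of an almost contact structure.

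Finally I would distinguish the subcases. The $\alpha$-Sasakian condition $d\eta=\alpha F$ is checked by computing $d\eta = de_5$ from \eqref{par-2-form} applied to the parallel vector field $\xi=e_5$: this gives $d\eta = -\tfrac12\, e_5\haken T$ up to identification, which equals $-\tfrac12(\vrho e_{12}+\lambda e_{34})$. This is proportional to $F\propto(e_{12}+e_{34})$ \emph{if and only if} $\vrho=\lambda$, yielding exactly case B.2; the proportionality constant then pins down $\alpha$, and $\alpha=2$ (the Sasaki normalization) corresponds to $\vrho=\lambda=2$. \textbf{The main obstacle} I anticipate is bookkeeping with the normalization constants — the factors of $\tfrac12$ in \eqref{eq-norm-torsion} and \eqref{par-2-form}, the choice of $\mu$, and the sign conventions in $\vphi$ must all be tracked carefully so that $dF$ vanishes identically and the threshold $\vrho=\lambda=2$ comes out correctly; the conceptual content is otherwise straightforward once the structure is correctly identified.
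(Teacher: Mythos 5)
Your overall strategy is the same as the paper's: take $\xi=e_5$ (the paper obtains it intrinsically as $*\sigma_T/\|\sigma_T\|$, which also explains why $\xi$ is globally defined), build $F$ and $\vphi$ from the two $2$-planes, get $\nabla$-parallelism of the structure from $\hol^\nabla\subset\iso(T)$, compute $dF$ with formula (\ref{par-2-form}), and read off the $\alpha$-Sasakian condition from $d\eta$. However, one step fails as stated: your justification of $N=0$. It is \emph{not} true that $\nabla\vphi=0$ together with skew-symmetry of the torsion implies $N=0$; a connection with skew torsion preserving a metric almost contact (or almost Hermitian) structure only forces $N$ to be \emph{totally skew-symmetric}, not zero --- nearly K\"ahler manifolds, which satisfy $\nabla J=0$ for a connection with skew torsion and have $N\neq 0$, are the standard counterexample to the principle you invoke. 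The missing input is precisely $dF=0$: the paper concludes $N=0$ from \cite[Thm 8.4]{FI02}, which states that for a metric almost contact structure admitting a connection with skew torsion, $dF=0$ forces $N=0$. Since you do establish $dF=0$, the gap is fixable by citing that theorem (or by computing $N$ algebraically from the normal form of $T$, using $[X,Y]=\nabla_XY-\nabla_YX-T(X,Y)$ and $\nabla\vphi=0$), but your stated route is not a proof.

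Two further slips in the computations you outline. First, your anticipated mechanism for $dF=0$ is confused: $dF$ is a $3$-form, so it cannot be ``a multiple of $e_{1234}$'' or of $\sigma_T$. In fact, in formula (\ref{par-2-form}) each summand $(e_i\haken F)\wedge(e_i\haken T)$ vanishes individually, e.g.\ $(e_1\haken F)\wedge(e_1\haken T)=\mu\, e_2\wedge(-\vrho\, e_{25})=0$, so $dF=0$ holds identically for \emph{every} $\mu$; no tuning occurs. Moreover $\mu$ is not free in the first place: condition (\ref{contact-condition}) forces $\vphi^2=-\Id+\eta\ox\xi$, i.e.\ $\mu=\pm 1$. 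Second, your $d\eta$ carries a wrong factor: from $\langle\nabla^g_X\xi,Y\rangle=-T(X,\xi,Y)/2$, antisymmetrization gives $d\eta(X,Y)=\langle\nabla^g_X\xi,Y\rangle-\langle\nabla^g_Y\xi,X\rangle=T(X,Y,\xi)$, i.e.\ $d\eta=\xi\haken T=-(\vrho\, e_{12}+\lambda\, e_{34})$ with no factor $\tfrac{1}{2}$. With your extra $\tfrac{1}{2}$ the Sasaki threshold would come out as $\vrho=\lambda=4$ instead of $2$, so this is not merely cosmetic, although the qualitative statement ($\alpha$-Sasakian if and only if $\vrho=\lambda$) survives either way.
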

\begin{proof}
The vector field $V:=*\sigma_T\neq 0$ is $\nabla$-parallel, and therefore 
Killing and of constant length. It satisfies $V\haken \sigma_T=0$
because $V\haken (*V)=0$. It defines a $\nabla$-parallel Killing vector 
field $\xi$ of unit length, 
\bdm
\xi\ =\ \frac{1}{\|\sigma_T\|}\,*\sigma_T\  =\  
\frac{1}{\vrho\lambda}\, *\sigma_T \ =\ e_5.
\edm
Denote by $\eta$ its dual $1$-form; in general, we identify
vectors and $1$-forms. Clearly $\nabla \eta=0$ as well,
and the formulas for computing the differential from the covariant derivative
yield then
\bdm
d\eta(X,Y)\ =\ \eta(T(X,Y))\ =\ g(\xi, T(X,Y))\ =\  T(X,Y,\xi),
\edm
hence, in the normal form of $T$,
$d\eta= de_5 = - ( \vrho e_{12} + \lambda e_{34})$.
In particular, both expressions together show that $T=\eta\wedge d\eta$.
Furthermore, $\nabla\xi=0$ is equivalent to $\nabla^g_X\xi=-T(X,\xi)/2$,
hence
\bdm
\nabla^g\xi\ =\ -(\frac{\vrho}{2}e_{12}+\frac{\lambda}{2}e_{34}).
\edm
We define an endomorphism $\vphi$ and its corresponding $2$-form 
$F(X,Y)=g(X,\vphi Y)$ by
\bdm
F\ =\ -(e_{12}+ e_{34}),\quad \vphi\ =\ e_{12} + e_{34}.
\edm
Let us remark that $F$ and $\vphi$ are globally well-defined.
If  $\vrho\neq\lambda$, $\m^2_1=\langle e_1, e_2\rangle$,
$\m^2_2=\langle e_3, e_4\rangle$ are $2$-dimensional
$\nabla$-parallel distributions,  the $2$-forms $\Omega_1:=e_{12}$ and 
$\Omega_2 :=e_{34}$
are $\nabla$-parallel and globally defined.
If
$\vrho=\lambda$, $F$ is proportional to $d\eta$, and again well-defined.
For later, we observe that $\nabla^g\xi=-\vphi$ if and only if 
$\vrho=\lambda=2$, and this is equivalent to $2F=d\eta$. One checks that 
$\vphi$, $\eta$ and $\xi$ satisfy  conditions (\ref{contact-condition}), 
which means that $(\xi,\eta,\vphi)$ defines a metric almost contact structure.
We now prove that this structure is quasi-Sasakian. First, consider $F$. 
For  $\vrho=\lambda$, we have $\vrho dF=dd\eta=0$.
Furthermore, $F$ is proportional to $\xi\haken T$, and this is a parallel
$2$-form, so $\nabla F=0$ as well, which is equivalent to $\nabla \vphi=0$.
If  $\vrho\neq\lambda$, we just observed that
$\Omega_1$ and $\Omega_2$ are $\nabla$-parallel, hence $\nabla F=0$, 
and we can conclude
with formula (\ref{par-2-form}) that $d \Omega_1=d \Omega_2=0$,
so $dF=0$. To sum up, we obtain 
$\nabla F=0$  and $dF=0$ for all coefficients $\vrho\lambda\neq 0$. 
This is the first of the
two conditions for a quasi-Sasakian structure. If a metric almost contact 
structure admits a connection with skew symmetric torsion, 
then $dF = 0$ implies 
that the Nijenhuis tensor vanishes, $N = 0$, see  \cite[Theorem 8.4]{FI02}. 
Thus, the structure is quasi-Sasakian as claimed.
\end{proof}
%
\section{The classification in dimension $6$}
%
A naturally reductive  $6$-space whose
torsion has non-trivial kernel is locally a product of lower-dimensional
spaces of the same kind by Theorem \ref{kerT-algebra-gT}. 
This applies, for example,
to a torsion that has the normal form (\ref{dim5-torsion}) and that depends
on a $5$-dimensional subspace only. Consequently, we will
assume henceforth that $\ker T=0$. We will discuss the different
cases according to the
algebraic properties of $\sigma_T$. Observe that $*\sigma_T$ is a $2$-form
and can hence be identified with a skew-symmetric endomorphism.
Thus, $\rk (*\sigma_T)$ is either $0,2,4$, or $6$; in analogy to the
discussion in five dimensions, we will label these cases A, B, C, and D.

First, we treat the
degenerate case $\sigma_T=0$ (case A): this covers, for example,
torsions with local normal form $\mu\,e_{123}+\nu\,e_{456}$.
Our result shows that this is basically the only possibility:
\begin{thm}[case A -- with parallel skew torsion]
Let $(M^6,g,T)$  be a complete, simply connected Riemannian $6$-manifold
with parallel skew torsion $T$ such that $\sigma_T=0$ and $\ker T=0$.
Then $M^6$ splits into two $3$-dimensional manifolds with parallel
skew torsion, 
\bdm
(M^6,g,T)\ =\ (N^3_1,g_1,T_1)\x (N^3_2,g_2,T_2).
\edm
\end{thm}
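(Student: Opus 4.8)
The plan is to feed the hypothesis $\sigma_T=0$ into the splitting machinery of Theorem~\ref{cor.splitting-g-T} and then use the single remaining hypothesis $\ker T=0$ to eliminate, by a dimension count, every possibility except the desired $3+3$ product. First I would invoke Theorem~\ref{cor.splitting-g-T}: since $M^6$ is complete, simply connected and carries parallel skew torsion with $\sigma_T=0$, the decomposition $TM^6=\mathcal{T}_1\oplus\cdots\oplus\mathcal{T}_q$ into $\g_T$-irreducible, $\nabla$-parallel distributions realises $M^6$ as a Riemannian product $(M_1,g_1,T_1)\x\cdots\x(M_q,g_q,T_q)$ with $T=\sum_i T_i$, each $T_i$ a $3$-form on the factor $M_i$. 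Because the distributions $\mathcal{T}_i$ are mutually orthogonal, the $4$-forms $\sigma_{T_i}$ live in distinct orthogonal summands of $\Lambda^4$ and $\sigma_T=\sum_i\sigma_{T_i}$; hence $\sigma_T=0$ forces $\sigma_{T_i}=0$ on every factor, so each $(M_i,g_i,T_i)$ is again a complete, simply connected manifold with parallel skew torsion, vanishing $\sigma_{T_i}$, and with $\g_{T_i}$ acting irreducibly.

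Next I would exploit $\ker T=0$ to bound the factor dimensions from below. Since $T$ splits orthogonally, $\ker T=\bigoplus_i\ker T_i$, so $\ker T=0$ implies $T_i\neq 0$ for all $i$, and a non-zero $3$-form requires $\dim M_i\geq 3$. As $\sum_i\dim M_i=6$, only two factors of dimensions $(3,3)$, or a single $\g_T$-irreducible factor of dimension $6$, can occur. In the first case the theorem is proved, the two factors $N_1^3,N_2^3$ carrying the parallel skew torsions $T_1,T_2$ (whose kernels are automatically trivial in dimension $3$).

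The main obstacle is ruling out the single $6$-dimensional $\g_T$-irreducible factor, and here I cannot quote Theorem~\ref{thm.sigmaT-vanishes} verbatim, because that statement presupposes \emph{Riemannian} irreducibility while I only have $\g_T$-irreducibility. Instead I would rerun the core of its proof, which needs only $\g_T$-irreducibility: the triple $(G_T,V,T)$ is then an irreducible skew torsion holonomy system in the sense of Definition~\ref{dfn.SHS}, so by the Skew Holonomy Theorem~\ref{thm.SHS} it is either transitive or symmetric. Transitivity gives $G_T=\SO(6)$, whence $\g_T=\so(6)=\iso(T)$ and therefore $T=0$ (as $6\geq 5$, no nonzero $3$-form is $\SO(6)$-invariant), a contradiction. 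Thus the system is symmetric: $G_T$ is simple and acts on $V=\g_T$ by its adjoint representation, and the Riemannian inner product on $V$ is an $\Ad(G_T)$-invariant positive-definite form, which forces $G_T$ to be \emph{compact} simple of dimension $\dim V=6$. Since no compact simple Lie algebra has dimension $6$ (the admissible dimensions begin $3,8,10,14,\dots$), this case is impossible. Only the $(3,3)$ splitting survives, which is exactly the claimed decomposition into two $3$-dimensional manifolds with parallel skew torsion.
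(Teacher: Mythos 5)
Your proof is correct and follows essentially the same route as the paper's: both use the splitting Theorem~\ref{cor.splitting-g-T}, the observation that $\ker T=0$ forces every factor to carry a nonzero $3$-form (hence have dimension $\geq 3$), and the Skew Holonomy Theorem~\ref{thm.SHS} to exclude an irreducible $\g_T$-action (transitive case: $\sigma_T=0$ gives $\so(6)\subset\iso(T)$, so $T=0$; symmetric case: no $6$-dimensional compact simple Lie algebra exists). The only difference is the order of the steps -- the paper rules out irreducibility of $\g_T$ first and then splits, while you split first and eliminate the hypothetical $6$-dimensional irreducible factor afterwards -- which is purely cosmetic.
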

\begin{proof} %
First, we  argue that the Lie algebra $\g_T$ cannot act
irreducibly on  the tangent space $V=T_pM^6$. If so, it would define an 
irreducible skew holonomy system, so $\g_T=\so(6)$ by Theorem
\ref{thm.SHS}, since there are no $6$-dimensional compact simple Lie algebras.
But equation (\ref{eq.XT-auf-T}) and the condition  $\sigma_T=0$
imply
\bdm
\g_T\ =\ \so(6)\ \subset \ \iso (T), 
\edm
hence $T=0$, a contradiction. The assumption  $\ker T=0$ yields 
that $V$ can only split into two $3$-dimensional $\g_T$-invariant
subspaces. Now the claim follows from the splitting
Theorem  \ref{cor.splitting-g-T}.
\end{proof}
\begin{cor}[case A -- the naturally reductive case]
Any $6$-dimensional simply connected naturally reductive space with 
$\sigma_T=0$ and $\ker T=0$ is  isometric to a product of 
two $3$-dimensional  naturally reductive spaces. 
\end{cor}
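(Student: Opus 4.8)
The plan is to reduce the statement to the immediately preceding theorem (case A -- with parallel skew torsion) together with the product criterion of Theorem \ref{thm.prod-nat-red}, so that essentially no new computation is required. The corollary is purely a matter of assembling these two ingredients.

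First I would verify that the hypotheses of the preceding theorem are satisfied. By definition a naturally reductive space is a homogeneous space $M=G/K$ carrying a metric connection $\nabla$ with skew torsion for which $\nabla T = \nabla\kr = 0$; in particular $\nabla T = 0$, so $(M^6,g)$ is a manifold with parallel skew torsion. Being homogeneous it is complete, simple-connectedness is assumed, and the remaining hypotheses $\sigma_T = 0$ and $\ker T = 0$ are exactly those imposed in the corollary. Thus $(M^6,g,T)$ lies precisely within the scope of that theorem.

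Applying it produces the isometric splitting into a Riemannian product $(M^6,g,T) = (N^3_1,g_1,T_1)\times(N^3_2,g_2,T_2)$ of two $3$-dimensional manifolds with parallel skew torsion, the torsion respecting the decomposition as $T=T_1+T_2$. Now, since $M^6$ is naturally reductive \emph{and} has been exhibited as a Riemannian product, the forward direction of Theorem \ref{thm.prod-nat-red} forces each factor $N^3_i$ to be naturally reductive, which is the assertion. One may additionally invoke the classification in dimension $3$ (Theorem \ref{thm.class-dim3}) to describe the factors explicitly, but this is not needed for the stated claim.

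I anticipate no genuine obstacle at the level of this corollary: its entire force is inherited from the two results it combines. The real content sits in the preceding splitting theorem---whose proof uses the Skew Holonomy Theorem (Theorem \ref{thm.SHS}) to exclude an irreducible action of $\g_T$ in dimension $6$ and then appeals to Theorem \ref{cor.splitting-g-T}---and in the KV83 product criterion, which transfers natural reductivity between a Riemannian product and its factors. The only point warranting care is the observation, made above, that natural reductivity supplies both the parallelism of $T$ and completeness, so that the splitting theorem genuinely applies.
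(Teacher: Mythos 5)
Your proposal is correct and coincides with the paper's own (implicit) argument: the corollary is obtained by applying the preceding splitting theorem for parallel skew torsion---whose hypotheses hold because natural reductivity gives $\nabla T=0$ and homogeneity gives completeness---and then invoking the product criterion of Theorem \ref{thm.prod-nat-red} to transfer natural reductivity to the two $3$-dimensional factors. This is exactly the same two-step pattern the paper uses for the analogous corollaries in dimensions $4$ and $5$, so no further comment is needed.
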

Next we classify the case where $\ker T=0$ and the $2$-form $*\sigma_T$ 
has rank 2, 
\bdm
*\sigma_T \ = \ \rho \, e_{56} , \quad \sigma_T \ = \ \rho \, e_{1234}  .
\edm
The condition $\nabla T = 0$ implies $\nabla \sigma_T = 0$, 
i.\,e.~$\hol^\nabla  \subset   \iso(T)  \subset \so(4) \oplus \so(2)$.
We use again  equation (\ref{eq.XT-auf-T}), 
\bdm 
(e_5 \haken T)(T) \ = \ e_5 \haken \sigma_T \ = \ 0 \ = \  e_6 \haken \sigma_T \ = \ 
 (e_6 \haken T)(T)
\edm
and conclude that the $2$-forms $e_5 \haken T , e_6 \haken T$ belong to 
the Lie algebra $\iso(T) \subset \so(4) \oplus \so(2)$. Consequently, $T$ does 
not contain a term of type $v \wedge e_5 \wedge e_6 \, , \, v \in \R^4$. Let us 
write the $3$-form $T$ as
\bdm
T \ = \ T_1 \, + \, \Omega_1 \wedge e_5 \, + \, \Omega_2 \wedge e_6
\edm
where $T_1\in\Lambda^3(\R^4)$ and $\Omega_1,\Omega_2 \in\Lambda^2(\R^4)$ are forms
in $\R^4$. Then
$e_5 \haken T  =  \Omega_1$ and $e_6 \haken T = \Omega_2$,
i.\,e.~$\Omega_1 , \Omega_2 \in \iso(T)$ are linearly independent $2$-forms 
preserving $T$. In particular, they preserve $T_1$ and commute,
\bdm
\Omega_1 \wedge \Omega_2 \ = \ \Omega_2 \wedge \Omega_1  .
\edm  
Denote by $\so(2) \oplus \so(2) \subset \so(4)$ the maximal abelian subalgebra 
of $\so(4)$
generated by $\Omega_1$ and $\Omega_2$. The form $T_1 \in \Lambda^3(\R^4) \cong \R^4$ is 
invariant under $\so(2) \oplus \so(2)$, but this subalgebra has no fixed 
vectors in $\R^4$. We conclude that $T_1 $ vanishes. Moreover, 
we obtain
\bdm
\hol^\nabla \ \subset\ \iso(T) \ = \ \so(2)\oplus\so(2)\ \subset\ \so(4)\ 
\subset \ \so(6).
\edm
Up to a conjugation in $\so(4)$ we may assume that
\bdm
\Omega_1 \ = \ \alpha \, ( e_{12}  +  e_{34})  , \quad 
\Omega_2 \ = \ \beta \, ( e_{12}  - e_{34}) 
\edm
and 
\be\label{B-formulaT}
T \ = \ \alpha \, ( e_{12}  +  e_{34}) \wedge e_5  + 
 \beta \, ( e_{12}  -  e_{34}) \wedge e_6  .
\ee
We compute the square $\sigma_T$ directly,
\bdm
\sigma_T \ = \ (\alpha^2 \, - \, \beta^2) \, e_{1234} \ = \rho \, e_{1234} .
\edm
In particular, the length $\rho$ of $\sigma_T$ is given by $\alpha$ and 
$\beta$. The curvature operator $\kr$ is the composition of the projection 
onto $\so(2) \oplus \so(2)$ with an invariant linear map. Since 
$\so(2) \oplus \so(2)$ is abelian, $\kr$ depends on three 
parameters,
\be\label{B-formulaR}
\kr\ =\ \frac{a}{\alpha^2}\, \Omega_1\odot \Omega_1  + 
\frac{2 \, c}{\alpha \beta}\, \Omega_1\odot \Omega_2 + 
\frac{b}{\beta^2}\, \Omega_2\odot\Omega_2.
\ee
The Bianchi identity, i.\,e.~that 
$T^2 + \kr$ should be a scalar in the Clifford 
algebra, yields the relation
\bdm
\alpha^2   -  \beta^2 - a + b \ = \ 0 \ .
\edm
Finally, the curvature argument from Proposition
\ref{prop.sigma1}   yields the following result:
\begin{thm}[case B -- with parallel skew torsion]
Let $(M^6, g, T)$ be a complete, simply connected 
Riemannian $6$-manifold with parallel skew torsion and such that 
$\ker T=0$ and $\rk(*\sigma_T) = 2$. 
Then $\nabla \kr =0$, i.\,e.~$M$ is naturally reductive,
and the family of admissible torsion forms and curvature operators
depends on 5 parameters $\alpha, 
\beta,a,b,c$ satisfying the equation $\alpha^2 -\beta^2 - a + b = 0$.  
The torsion and the curvature  are given by the explicit formulas
$(\ref{B-formulaT})$ and $(\ref{B-formulaR})$. 
\end{thm}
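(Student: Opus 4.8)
The plan is to assemble all the ingredients that have already been extracted in the paragraphs preceding the statement, and then verify that the hypotheses of Proposition \ref{prop.sigma1}(3) are met so that $\nabla\kr=0$ follows essentially for free. First I would recall the chain of inclusions $\hol^\nabla\subset\iso(T)=\so(2)\oplus\so(2)\subset\so(4)\subset\so(6)$, which the discussion preceding the theorem has already established from the conditions $\ker T=0$ and $\rk(*\sigma_T)=2$. The key point is that the isotropy algebra of the torsion, and hence the holonomy algebra, is contained in the \emph{abelian} two-dimensional subalgebra generated by $\Omega_1$ and $\Omega_2$. Since an abelian holonomy algebra is exactly the hypothesis of part (3) of Proposition \ref{prop.sigma1}, I can invoke it directly to conclude $\nabla\kr=0$, i.\,e.~that $M$ is naturally reductive.

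Next I would pin down the parameter count and the constraint. The torsion $T$ in the normal form $(\ref{B-formulaT})$ depends on the two parameters $\alpha,\beta$. Because $\hol^\nabla\subset\so(2)\oplus\so(2)$ is abelian, the symmetric, $\hol^\nabla$-equivariant curvature operator $\kr$ is a symmetric bilinear form on the two-dimensional space $\Lin(\Omega_1,\Omega_2)$, hence determined by the three coefficients $a,b,c$ appearing in $(\ref{B-formulaR})$. This gives five parameters $\alpha,\beta,a,b,c$ before imposing the Bianchi identity.

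Then I would impose the first Bianchi identity in its Clifford-algebra form (Theorem \ref{Bianchi-Clifford}), which demands that $T^2+\kr$ be a scalar, equivalently that the degree-four part of $T^2+\kr$ in $\mathcal{C}$ vanish. Computing the order-four component of $T^2$ from $(\ref{B-formulaT})$ reproduces $-2\sigma_T=-2(\alpha^2-\beta^2)e_{1234}$, while the order-four component coming from $\kr$ contributes through the $\Omega_i\odot\Omega_j$ terms; matching the coefficients of $e_{1234}$ gives exactly the single relation $\alpha^2-\beta^2-a+b=0$. This is the one equation cutting the five-parameter family down, so the admissible torsion forms and curvature operators depend on five parameters subject to that constraint, and the explicit formulas are $(\ref{B-formulaT})$ and $(\ref{B-formulaR})$, as claimed.

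The main obstacle is purely computational rather than conceptual: one must carefully carry out the Clifford-algebra multiplication to extract the degree-four part of $T^2$ and of $\kr$ and confirm that all contributions \emph{other} than the coefficient of $e_{1234}$ either cancel automatically or are already scalar, so that the single relation $\alpha^2-\beta^2-a+b=0$ is genuinely the only constraint. The bookkeeping with the Clifford products $e_{12}\,e_{34}$ and with the cross term $\Omega_1\odot\Omega_2$ is where sign and normalization errors would most easily creep in; everything else is a direct application of results already proved in the excerpt.
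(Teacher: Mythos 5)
Your proposal is correct and is essentially the paper's own argument: the paper likewise deduces $\nabla\kr=0$ from the fact that $\hol^\nabla\subset\iso(T)=\so(2)\oplus\so(2)$ is abelian via Proposition \ref{prop.sigma1}(3), parametrizes the symmetric curvature operator by three coefficients on $\Lin(\Omega_1,\Omega_2)$ as in (\ref{B-formulaR}), and extracts the single relation $\alpha^2-\beta^2-a+b=0$ by demanding that $T^2+\kr$ be scalar in the Clifford algebra, where your feared bookkeeping with the cross term is harmless since $\Omega_1\Omega_2=\Omega_2\Omega_1=0$ there, so only the $e_{1234}$-coefficients $-2(\alpha^2-\beta^2)$ and $2(a-b)$ need to be matched. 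The one ingredient you take as given---the reduction of $T$ to the normal form (\ref{B-formulaT}) with $\iso(T)=\so(2)\oplus\so(2)$, via the vanishing of the $\Lambda^3(\R^4)$-part and of the $v\wedge e_{56}$-part of $T$---is precisely what the paper establishes in the paragraphs directly preceding the theorem, so your citation of that discussion matches the paper's own organization of the proof.
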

In this situation, too, we describe explicitly the naturally reductive 
space $M^6$ and the $8$-dimensional group $G$ acting
transitively on it. Its Lie algebra $\g$ is an $8$-dimensional 
vector space with  basis $e_1, \ldots, e_6$ and 
\bdm
\tilde\Omega_1 \ := \ - \Big[ \frac{a + c}{\alpha} \, \Omega_1 + 
\frac{b + c}{\beta} \, \Omega_2  + 
\alpha \, e_5  +  \beta \, e_6 \Big],\quad
\tilde\Omega_2 \ := \ - \Big[ \frac{a - c}{\alpha} \, \Omega_1  + 
\frac{c - b}{\beta} \, \Omega_2 +\alpha \, e_5  -  \beta \, e_6 \Big] 
\edm 
We compute all commutator relations and observe that
the linear space $\g_1$ spanned by $e_1, e_2, e_3,
e_4$ and $\tilde\Omega_1, \tilde\Omega_2$ is a Lie subalgebra such that $\g_1 \cap \Lin(\Omega_1, \Omega_2) = 0$. 
Consequently, the simply connected manifold $M^6$ is isometric to the $6$-dimensional Lie group
$G_1$ defined by $\g_1$ and equipped with a family of left-invariant metrics.
The non-vanishing commutator relations in $\g_1$ are
\begin{gather*}
[e_1,e_2]\, =\,\tilde\Omega_1,\quad
[\tilde\Omega_1,e_1]\, =\, - (a + b + 2c - \alpha^2 - \beta^2)\, e_2,\quad
[\tilde\Omega_1 , e_2]\, =\, (a + b + 2c - \alpha^2 - \beta^2)\, e_1,\\ 
[e_3,e_4]\, =\,\tilde\Omega_2,\quad
[\tilde\Omega_2,e_3]\, =\, - (a + b - 2c - \alpha^2 - \beta^2)\, e_4,\quad
[\tilde\Omega_2, e_4]\, =\,  (a + b - 2c - \alpha^2 - \beta^2)\, e_3.
\end{gather*}
The Lie algebra $\g_1$ splits into two $3$-dimensional ideals and 
any of them is isomorphic
to $\so(3)$, $\sl(2,\R)$ or to the Heisenberg algebra $\h^3$. 
We have thus described completely the naturally 
reductive spaces classified in the previous Theorem. Remark that in 
the flat case ($a = b = c = 0$)
we obtain the result of Cartan and Schouten, see \cite{Agricola&F10},
whereby a $\nabla$-flat space with skew symmetric torsion 
is a product of compact simple Lie groups and the $7$-dimensional sphere.
\begin{thm}[case B -- classification]\label{thm.B-classification}
A complete, simply connected Riemannian $6$-manifold with parallel 
skew torsion $T$ and $\rk(*\sigma_{T}) = 2$ 
is the product $G_1\x G_2$ of two Lie groups equipped with a family of 
left-invariant metrics.
$G_1$ and $G_2$ are either $S^3=\SU(2)$,  $\widetilde{\SL}(2,\R)$, or $H^3$.
\end{thm}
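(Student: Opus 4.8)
The plan is to assemble the classification directly from the explicit Lie-algebraic description that immediately precedes the statement. Theorem~\ref{thm.B-classification} is essentially a restatement and geometric repackaging of what has already been computed: the preceding paragraph establishes that $M^6$ is isometric to the simply connected Lie group $G_1$ whose Lie algebra $\g_1$ is spanned by $e_1,e_2,e_3,e_4,\tilde\Omega_1,\tilde\Omega_2$, and that $\g_1$ splits as a direct sum of two $3$-dimensional ideals. So the first step is simply to record this splitting and to translate it into a product of two $3$-dimensional simply connected Lie groups $G_1\x G_2$ (a mild abuse of notation relative to the $\g_1$ appearing above, which I would clarify in the write-up).

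The core of the argument is then to identify each $3$-dimensional ideal up to isomorphism. From the commutator relations displayed just before the theorem, the first ideal is spanned by $e_1,e_2,\tilde\Omega_1$ with the single structural constant $\kappa_1:=a+b+2c-\alpha^2-\beta^2$, and the second by $e_3,e_4,\tilde\Omega_2$ with constant $\kappa_2:=a+b-2c-\alpha^2-\beta^2$. Each ideal has the form of a $3$-dimensional Lie algebra with brackets $[X,Y]=Z$, $[Z,X]=\pm\kappa\,Y$, $[Y,Z]=\pm\kappa\,X$; this is exactly the one-parameter family already encountered in Theorem~\ref{thm.class-dim3} and in the case B.1 discussion in dimension $5$. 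The second step is therefore to invoke that classification of $3$-dimensional Lie algebras: for $\kappa<0$ one obtains $\su(2)$, for $\kappa>0$ one obtains $\sl(2,\R)$, and for $\kappa=0$ one obtains the Heisenberg algebra $\h^3$. Passing to simply connected groups yields $S^3=\SU(2)$, $\widetilde{\SL}(2,\R)$, or $H^3$ for each factor, which is the content of the theorem.

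The only real obstacle is a bookkeeping one rather than a conceptual one: making sure the sign conventions in the bracket relations are read correctly so that the sign of $\kappa_i$ is matched to the correct group, and confirming that all combinations $(\kappa_1,\kappa_2)$ genuinely occur as the parameters $\alpha,\beta,a,b,c$ range (subject to the Bianchi constraint $\alpha^2-\beta^2-a+b=0$ already derived). Since $\kappa_1-\kappa_2=4c$ and the parameters are otherwise free, each sign pattern is realizable, so no combination is excluded --- in contrast to the dimension-$5$ case B.1, where two Heisenberg factors were ruled out. The last step is to note that the isometric identification $M^6\cong G_1\x G_2$ with left-invariant metrics is automatic from the Nomizu construction and completeness, so the theorem follows.

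In short, I would not attempt any fresh computation; I would present the result as a corollary of the already-displayed commutator relations plus the elementary classification of $3$-dimensional Lie algebras carrying the relevant bracket structure, emphasizing that every isomorphism type and every combination of the two factors does arise.
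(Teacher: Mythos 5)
Your proposal is correct and follows essentially the same route as the paper: the theorem is indeed stated there as a summary of the immediately preceding Nomizu computation, in which $\g_1=\Lin(e_1,e_2,e_3,e_4,\tilde\Omega_1,\tilde\Omega_2)$ is shown to be transversal to the isotropy algebra $\Lin(\Omega_1,\Omega_2)$ and to split into two $3$-dimensional ideals with structure constants $\pm(a+b\pm 2c-\alpha^2-\beta^2)$, each isomorphic to $\so(3)$, $\sl(2,\R)$, or $\h^3$. Your additional check that every sign pattern of $(\kappa_1,\kappa_2)$ is realizable under the Bianchi constraint (so that, unlike case B.1 in dimension $5$, the double Heisenberg factor $H^3\x H^3$ is not excluded) is a small but correct refinement that the paper leaves implicit.
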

\begin{NB}
An explicit description of naturally reductive structures with 
$\rk(*\sigma_{T}) = 2$ on $S^3\x S^3$ may be found in \cite[Exa 4.8]{Sch07}.
They are disjoint to the examples with $\rk(*\sigma_{T}) = 6$ that will be
described in Section \ref{S3xS3}.
\end{NB}
\begin{exa}
There are $3$-forms $T \in \Lambda^3(\R^6)$ such that the corresponding
$2$-form $* \, \sigma_T \in \Lambda^2(\R^6)$ has rank $4$. For example, consider
the family
\bea[*]
T &=& e_5 \wedge (a e_{12}  +  b e_{13}  +  c e_{14}  +  
d e_{23}  +  f e_{24}  +  h e_{34}) \\ 
& + & e_6 \wedge (s e_{12}  +  t e_{13}  +  u e_{14} +  
v e_{23}  +  w e_{24}  +  x e_{34})      
\eea[*]
depending on $12$ parameters. It turns out that the vectors $e_5 , e_6$ 
are in the  kernel of $* \, \sigma_T$ if and only if the equation
\bdm
cd  -  bf \, + \, ah \, + \, uv \, - \, tw \, + \, sx \ = \ 0
\edm
is satisfied. Consequently, a generic choice of parameters constrained 
by the latter equation yields $3$-forms with $\rk(*\sigma_T) = 4$. 
For example, the parameter values
$a = b= c=d= h= -f =1$, $s  =  \frac{1}{2}$ ,  $t= u= w = -v=1$ , $x =-2$  
are one out of many solutions with $\rk(*\sigma_T) = 4$.
Surprisingly, our next result states that such $3$-forms cannot
occur as torsions  forms of metric connections with parallel skew torsion,
thus showing that $\nabla T=0$ is a severe restriction on the algebraic
type of a $3$-form.
\end{exa}
Almost Hermitian $6$-manifolds have structure group $\U(3)\subset \SO(6)$.
If we decompose $\so(6)=\mathfrak{u}(3)\oplus \m$, they are classified by
their intrinsic torsion, which is an element of $\m\ox\R^6$. 
This was first done by Gray and Hervella in \cite{GH80}, who introduced
the by-now standard notation
\bdm
\m\ox\R^6\ =\ \mathcal{W}_1^{(2)}\oplus \mathcal{W}_2^{(16)}\oplus
 \mathcal{W}_3^{(12)}\oplus \mathcal{W}_4^{(6)} 
\edm
for its decomposition into $\U(3)$-modules (the upper index indicates the
real dimension and will be dropped later; see also \cite{Alexandrov&F&S05}
for a modern account). An almost Hermitian $6$-manifold 
admits a characteristic connection if and only if the intrinsic torsion 
has no $\mathcal{W}_2$-component \cite{FI02}. More precisely,
$\Lambda^3(\R^6)=\mathcal{W}_1\oplus \mathcal{W}_3\oplus \mathcal{W}_4$ as a 
$\U(3)$-module,
 the torsion of the characteristic connection is a linear combination
$T=T_1+T_3+T_4$ with $T_i\in \mathcal{W}_i$ and the Gray-Hervella class
of the almost Hermitian structure corresponds to the non-vanishing
contributions of $T$. For example,
almost Hermitian manifolds of  class $\mathcal{W}_1$ go under the name
nearly K\"ahler manifolds, while Hermitian manifolds 
(vanishing Nijenhuis tensor) are of class $\mathcal{W}_3\oplus\mathcal{W}_4$.
%
\begin{thm}[cases C and D -- eigenvalues of $\sigma_T$]\label{caseCDn=6}
%
If $T$ is the torsion form of a metric connection $\nabla$ 
in dimension $6$ with parallel skew torsion, 
$\ker T = 0$ and $\rk(*\sigma_T) \geq 4$, then all eigenvalues 
of $*\sigma_T$ are
equal. The $2$-form defines a $\nabla$-parallel almost Hermitian 
structure $J$ of Gray-Hervella class  
$\mathcal{W}_1 \oplus \mathcal{W}_3$. In particular, the 
case $\rk(*\sigma_T) = 4$ cannot occur.
\end{thm}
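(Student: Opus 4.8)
The plan is to organize everything around the $\nabla$-parallel $2$-form $\Omega:=*\sigma_T$. Since $\sigma_T$ is built $\nabla$-invariantly from $T$ and the Hodge star commutes with the metric connection $\nabla$, we have $\nabla T=0\Rightarrow\nabla\sigma_T=0\Rightarrow\nabla\Omega=0$; hence the eigenvalues of the skew-symmetric endomorphism $\Omega$ are constant, its eigendistributions are $\nabla$-parallel, and $\hol^\nabla\subset\iso(T)$ lies in the centralizer of $\Omega$ in $\so(6)$. The first thing I would extract is the identity $\Omega\haken T=0$. Indeed $\sigma_T=\tfrac12\,dT$ by $(\ref{thm.first-cons})$, so $d\sigma_T=0$; as $**=\mathrm{Id}$ in this degree we have $\sigma_T=*\Omega$, whence $d*\Omega=0$, i.e.\ $\Omega$ is coclosed; but a $\nabla$-parallel $2$-form satisfies $\delta^g\Omega=\tfrac12\,\Omega\haken T$ by $(\ref{par-2-form})$, and therefore $\Omega\haken T=0$. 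This relation uses $\nabla T=0$ and not merely the pointwise algebra of $T$, and it is what ultimately separates the admissible torsions from the purely algebraic Example with $\rk(*\sigma_T)=4$.

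As a structural preliminary I would record that $\g_T$ acts irreducibly on $V=T_pM^6$, so that $\g_T=\so(6)$. If $\g_T$ were reducible, the splitting recalled in Definition~\ref{kerT-algebra-gT} would give an orthogonal $\g_T$-decomposition with $T=\sum_iT_i$, $T_i\in\Lambda^3V_i$; since $\ker T=0$ forces $\dim V_i\geq3$, the only possibility in dimension $6$ is $V=V_1\oplus V_2$ with $\dim V_i=3$ and each $T_i$ a multiple of a volume form, whence $\sigma_{T_i}=0$ and, the cross terms vanishing by orthogonality, $\sigma_T=0$ — contradicting $\rk(*\sigma_T)\geq4$. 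Thus $(G_T,V,T)$ is an irreducible skew torsion holonomy system; its symmetric alternative in the Skew Holonomy Theorem $(\ref{thm.SHS})$ would require a $6$-dimensional compact simple Lie algebra, of which there are none, so the transitive alternative gives $\g_T=\so(6)$. In particular the torsion is algebraically non-degenerate and no $\g_T$-splitting of $M^6$ is available, so the six-dimensional situation must be faced directly.

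The heart of the argument — and the step I expect to be the main obstacle — is to show that $\Omega$ has a single eigenvalue (up to sign), thereby forcing $\rk(*\sigma_T)=6$ and excluding $\rk(*\sigma_T)=4$. I would proceed by the multiplicity type of the eigenvalues of $\Omega$, equivalently by the centralizer of $\Omega$, which contains $\hol^\nabla$ through $\hol^\nabla\subset\iso(T)$. When the three eigenvalue blocks are pairwise distinct (the all-distinct rank $6$ case and the distinct rank $4$ case) this centralizer is abelian, so $M^6$ is naturally reductive by Proposition~\ref{prop.sigma1}(3); I would then feed the first Bianchi identity in Clifford form (the degree-four part of $\kr$ equals $2\sigma_T=2*\Omega$, with $\kr$ symmetric and of image in $\hol^\nabla$) together with $\Omega\haken T=0$ and the explicit block form of $\sigma_T=*\Omega$ into the Nomizu reconstruction, and show that these constraints over-determine $T$ and are incompatible with $\ker T=0$. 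When instead one eigenvalue is repeated alongside a different one (the rank $4$ case $\un(2)\oplus\so(2)$ and the two-equal rank $6$ case $\un(2)\oplus\un(1)$), the holonomy contains $\su(2)$, and I would pin down $\kr$ by the $\su(2)$-equivariance computation already carried out in Theorem~\ref{n=5EWgleich}, again reaching a contradiction unless all eigenvalues coincide. The decisive and hardest configuration is $\rk(*\sigma_T)=4$: there the bookkeeping of the components of $\sigma_T=*\Omega$ — in particular the vanishing of the $\Lambda^4\big((\ker\Omega)^\perp\big)$-component, which is a Pfaffian relation on the two $2$-forms obtained by contracting $T$ with the kernel — is what drives the contradiction, and controlling these components uniformly is the principal technical difficulty.

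Once all eigenvalues coincide we have $\Omega=\mu\,J$ with $\mu\neq0$ and $J$ an orthogonal almost complex structure. Since $\nabla J=0$ and $\nabla$ has skew torsion, $\nabla$ is the characteristic connection of the almost Hermitian structure $(g,J)$; in particular the intrinsic torsion has no $\mathcal{W}_2$-component. Its fundamental form $F=g(\,\cdot\,,J\,\cdot\,)$ is proportional to $\Omega$ and hence again $\nabla$-parallel, so $(\ref{par-2-form})$ gives $\delta^gF=\tfrac12\,F\haken T$, which is proportional to $\Omega\haken T=0$. Thus the Lee form vanishes, $T$ has no $\mathcal{W}_4$-component, and the Gray--Hervella class is contained in $\mathcal{W}_1\oplus\mathcal{W}_3$. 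As $\rk(*\sigma_T)=4$ would leave $\Omega$ with a nonzero kernel and hence an eigenvalue $0$ distinct from the nonzero ones, it is excluded, completing the statement.
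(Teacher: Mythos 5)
Your opening and closing reductions are sound and essentially coincide with the paper's: $\Omega:=*\sigma_T$ is $\nabla$-parallel, the Lee form vanishes because $d\sigma_T=0$ makes $\Omega$ coclosed while equation (\ref{par-2-form}) gives $\delta^g\Omega=\frac{1}{2}\,\Omega\haken T$, so once all eigenvalues coincide the structure is almost Hermitian of class $\mathcal{W}_1\oplus\mathcal{W}_3$. Your preliminary that $\g_T=\so(6)$ via Theorem \ref{thm.SHS} is also correct (it is the paper's own argument in case A), though it plays no role afterwards. The problem is that the actual content of the theorem --- that two distinct eigenvalues of $*\sigma_T$ are impossible, and in particular that $\rk(*\sigma_T)=4$ cannot occur --- is never proved in your text; you yourself label it ``the main obstacle'' and ``the principal technical difficulty,'' and what you offer in its place is a plan, not an argument. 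In the abelian-centralizer cases you correctly invoke Proposition \ref{prop.sigma1}(3) to get $\nabla\kr=0$, but then merely assert that the Clifford--Bianchi identity, $\Omega\haken T=0$, and $\ker T=0$ ``over-determine $T$'' and are incompatible; no incompatibility is derived. In the $\un(2)$-type cases the assertion that $\hol^\nabla$ contains $\su(2)$ is unjustified: $\hol^\nabla$ is only known to lie in the centralizer of $\Omega$ and may perfectly well be a proper abelian subalgebra of it (this is exactly the dichotomy that Theorem \ref{n=5EWgleich} has to handle), and the $\su(2)$-equivariance computation of that theorem is a five-dimensional computation that does not transfer to dimension $6$ as stated.

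For comparison, the paper fills precisely this hole with several nontrivial steps you do not have. It splits $TM^6=E^2\oplus F^4$ with $E^2$ a $\nabla$-parallel eigen- (or kernel-) plane; proves by a curvature/Bianchi computation that $F^4$ admits \emph{no} nonzero $\nabla$-parallel vector field; deduces from this the normal form $T=e_5\wedge\Omega_1+e_6\wedge\Omega_2$ with $\Omega_1,\Omega_2\in\so(F^4)$ linearly independent; computes $2\sigma_T=(\Omega_1\wedge\Omega_1+\Omega_2\wedge\Omega_2)-[\Omega_1,\Omega_2]\wedge e_{56}$; shows that $\hol^\nabla$ cannot lie inside $\so(F^4)$, so some holonomy element rotates $E^2$, which forces $\Omega_1,\Omega_2$ to be orthogonal of equal length; and finally rules out, case by case, the possible dimensions of the Lie algebra generated by $\Omega_1,\Omega_2$ and of $\hol^\nabla$ itself. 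Until you supply arguments of this kind --- or actually carry out your Nomizu/Bianchi computations in every centralizer type, including the rank-$4$ Pfaffian analysis you only allude to --- the proposal has a genuine gap at its central step.
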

\begin{proof}
If the eigenvalues of $*\sigma_T$ are equal, this $2$-form can 
be viewed, up to
a factor, as  the K\"ahler form of an almost Hermitian structure. 
The metric as well as the almost complex structure are preserved by 
a connection 
with totally skew symmetric torsion. This implies that the  
$\mathcal{W}_2$-part 
in the Gray-Hervella classification vanishes.  Moreover, 
$dT = 2 \, \sigma_T$ yields that $*\sigma_T$ is coclosed, $\delta(*\sigma_T) =
 d(\sigma_T) = 0$, i.e. the $\mathcal{W}_4$-part  
vanishes too (see \cite{GH80}, \cite{FI02} or \cite{Alexandrov&F&S05}). 

By contradiction, let use assume that $*\sigma_T$ has two different 
eigenvalues. 
At least one of them defines a $\nabla$-parallel $2$-dimensional 
subbundle $E^2$, i.e. the tangent 
bundle splits into $TM^6 = E^2 \oplus F^4$ and
 the holonomy algebra of $\nabla$ is contained in $\so(E^2) \oplus
\so(F^4)$. In case $\rk(*\sigma_T) = 4$,  we put $E^2 := \ker(*\sigma_T)$. Thus,
the $2$-form $*\sigma_T$ is non-degenerate on the subbundle $F^4$ in 
all cases. We fix a local frame $e_5, e_6$ in $E^2$. 
First we prove that the subbundle $F^4$ does not admit
non-trivial  
$\nabla$-parallel vector fields. In fact, if $V$ is one, then $V \haken 
(*\sigma_T)$ is $\nabla$-parallel too. This restricts the holonomy algebra,
\bdm
\hol^{\nabla} \ \subset \ \so(E^2) \, \oplus \, \mathfrak{u} ,
\edm
where $\mathfrak{u} \subset \so(F^4)$ is a $1$-dimensional subalgebra. 
We fix a basis $e_1, e_2, e_3 = V, e_4 = V \haken (*\sigma_T)$ 
in $F^4$ as well as a generator $\Omega$ of the subalgebra 
$\mathfrak{u} \subset \so(F^4)$. Since 
$e_3 \haken \Omega = e_4 \haken \Omega = 0$,
$\Omega$ is given by $\Omega = a \, e_{12}$. Consider the curvature operator of 
$\nabla$,
\bdm
\kr \ = \ \alpha \, e_{56} \odot e_{56} \, + \, \beta \,  e_{56}  \odot \Omega 
\, + \, \gamma \, \Omega \odot \Omega \ .
\edm
In the Clifford algebra the Bianchi identity reads as
$-  2 \, \sigma_T  +  \kr  =  T^2  +  \kr  =  0$
modulo scalars. This implies the following formula for $\sigma_T$,
\bdm
\sigma_T \ = \ \frac{\alpha \, \beta}{2} \,  e_{1256} \ ,
\edm
i.\,e.~$2 *\sigma_T = \alpha \beta \, e_{34}$ has rank $2$, a contradiction. 
This proves 
that $F^4$ has no $\nabla$-parallel vector fields.
We apply this fact to $e_{56} \haken T$ and to the restriction 
$T_{|F^4} \in \Lambda^3(F^4) \cong F^4$. They are
$\nabla$-parallel vector fields in $F^4$, hence they should vanish. 
Therefore, the torsion form can be written locally as
\bdm
T \ = \ e_5 \wedge \Omega_1 \ + \ e_6 \wedge \Omega_2
\edm
where $\Omega_1, \Omega_2 \in \so(F^4)$ are $2$-forms in $F^4$. Since the 
kernel of $T$ is trivial and 
$\Omega_1 = e_5 \haken T , \, \Omega_2 = e_6 \haken T$,
the forms $\Omega_1, \Omega_2$ are linearly independent. Using the 
well-known formulas for multiplication inside the Clifford algebra
\bdm
T^2 \ = \ -   2 \, \sigma_T  +  \|T\|^2  , \quad \Omega^2_i \ = 
\ - \|\Omega_i\|^2  +  \Omega_i \wedge \Omega_i  , \quad i = 1,2 
\edm
we can compute $\sigma_T$ :
\bdm
2 \, \sigma_T \ = \ ( \Omega_1 \wedge \Omega_1 +  
\Omega_2 \wedge \Omega_2 )  - \ [\Omega_1  ,  \Omega_2] \wedge e_{56} .  
\edm
The forms $\Omega_1, \Omega_2$ and $[\Omega_1,\Omega_2]$ are 
linearly independent, otherwise the rank of $\sigma_T$ would not 
be six or the Lie algebra $\so(F^4)$ would contain a non-abelian 
$2$-dimensional subalgebra. Since $\sigma_T$ and $e_{56}$ are $\nabla$-parallel,
the $2$-form $[\Omega_1 , \Omega_2]$ is $\nabla$-parallel too. The holonomy 
algebra $\hol^{\nabla}$ is at least one-dimensional 
(see Proposition \ref{prop.sigma1}) and preserves the $2$-dimensional spaces 
$\Lin(e_5,e_6)$ and $\Lin(\Omega_1,\Omega_2)$. Consider an arbitrary element 
$\Sigma = A \, e_{56}  + \Sigma_1 \in \so(E^2) \oplus \so(F^4)$. The 
element $\Sigma$ preserves $T$ if and only if
\bdm
\Sigma(e_5) \ = \ A \, e_6  , \quad 
\Sigma(e_6) \ = \ -  A \, e_5  , \quad
[\Sigma_1  ,  \Omega_1 ] \ = \ A \, \Omega_2 , \quad 
[\Sigma_1 , \Omega_2 ] \ = \ -  A \, \Omega_1 .
\edm
The holonomy algebra cannot lie completely inside $\so(F^4)$. Indeed,
the existence of a curvature operator $\kr : \Lambda^2 \rightarrow \hol^{\nabla}$
as well as the Bianchi identity implies in this case that
$2  \sigma_T \ = \ \alpha \, e_{1234}$,
i.\,e.~$*\sigma_T$ has rank $\leq 2$, a contradiction. Consequently, 
there exists at least one element
$\Sigma = A \, e_{56}  +  \Sigma_1 \in \hol^{\nabla}$ 
with
$A \not= 0$. The commutator relations  imply that 
the $2$-forms $\Omega_1, \Omega_2$ are orthogonal 
with equal lengths. Then $\Omega_1, \Omega_2$ generate a 
Lie algebra $\mathfrak{h}$ such that $\dim \mathfrak{h} \geq 3$.

If $\dim \mathfrak{h} =  3$, the vector space 
$\Lin(\Omega_1, \Omega_2, [\Omega_1,\Omega_2])$  
is a Lie subalgebra. Since the kernel of $T$ is trivial, the representation of 
this algebra in $\R^4$ is irreducible. Therefore this algebra can be conjugated 
into $\su(2) \cong \Lambda^2_{-}(F^4)$ or $\Lambda^2_{+}(F^4)$. We 
know already that $\Omega_1, \Omega_2$ are orthonormal and have the 
same length, hence (in an appropriate basis)
\bdm
\Omega_1 \ = \ e_{12}  -  e_{34}, \ 
\Omega_2 \ = \ e_{13}  + e_{24} , \quad 2 \, \sigma_T \ = \ 
-  4 \, e_{1234}  +  4 \, (e_{23} - e_{14}) \wedge e_{56}  ,
\edm
i.\,e.~the eigenvalues of $\sigma_T$ are equal, a contradiction. Thus, 
$\dim \h =  3$ cannot occur. In the next step we prove that the 
holonomy algebra is one-dimensional, $\dim \hol^{\nabla}=  1$.
Indeed, suppose that $\dim \hol^{\nabla} \geq  2$. Then there exists an 
element $0 \not= \Sigma^* \in \hol^{\nabla} \cap \so(F^4)$ and 
\bdm
\Sigma^*(e_5) \ = \ \Sigma^*(e_6) \ = \ 0  , \quad
[\Sigma^* , \Omega_1] \ = \ [\Sigma^* , \Omega_2] \ = \ 0 
\edm 
hold. Then $\Sigma^*$ commutes with the whole Lie algebra $\mathfrak{h}$ 
generated by
$\Omega_1 , \Omega_2$, $[\Sigma^* , \mathfrak{h}] = 0$. 
Since the dimension of $\h$ is at least $4$, 
the existence of $\Sigma^*$ implies automatically that
$\dim \h  =  4$, $\Sigma^*  \in  \h$.
Moreover, $\Sigma^*$ is the central element of $\h$.
Up to a conjugation inside $\so(F^4)$, we know the Lie algebra has to be
\bdm
\h \ = \ \Lambda^2_{-}(F^4) \, \oplus \, \R \cdot (\Sigma^*)  
\edm
and
\bdm
\Sigma^* \ = \ e_{12}  + e_{34} , \ 
\Omega_1 \ = \ \Omega^-_1  +  a \, \Sigma^* \, , \
\Omega_2 \ = \ \Omega^-_2  +  b \, \Sigma^* \, , \ \Omega^-_1 , \Omega^-_2 \, 
\in \, \Lambda^2_{-}(F^4)   
\edm
for some constants $a,b \in \R$.
We use once again the existing element with $A\not= 0$, namely 
$\Sigma = A  e_{56}  +  \Sigma_1  =  A \, e_{56} + \Sigma_1^{+}  
+ \Sigma_1^{-} \in\hol^{\nabla}$ with $ \Sigma_1^{\pm}\in\Lambda_{\pm}^2(F^4)$. 
Comparing the $\Lambda_{\pm}^2$-parts in $[\Sigma_1, \Omega_1] = A \, \Omega_2, 
[\Sigma_1, \Omega_2] = - A \, \Omega_1$ we obtain 
\bdm
A \, \Omega_2^{-} \ = \ [ \Sigma_1^{-}, \Omega_1^{-}] , \quad
A b \, \Sigma^* \ = \ a \, [\Sigma^+_1  ,  \Sigma^*],\quad 
- \,  A \, \Omega_1^{-} \ = \ [ \Sigma_1^{-}  ,  \Omega_2^{-}] , \quad
- \, A a \, \Sigma^* \ = \ b \, [\Sigma^+_1  ,  \Sigma^*].  
\edm
The elements $\Sigma^*$ and $[\Sigma^+_1 , \Sigma^*]$ are orthogonal in 
$\Lambda^2_{+}$. Since $A \not= 0$ we conclude that $a = b = 0$. Then 
$\Omega_1, \Omega_2 \in \Lambda^2_{-}$ and the algebra $\mathfrak{h}$ is 
$3$-dimensional; but we know already that this case is impossible.
Consequently, the element $0 \not= \Sigma^*$ cannot exist, 
i.\,e.~$\dim \hol^{\nabla} = 1$.  This algebra is generated by one element 
$\Sigma = A \, e_{56}  +  \Sigma_1 , A \not= 0, \ \Sigma_1 \in \Lambda^2(F^4)$.
Using the curvature operator as well as the Bianchi identity again, we obtain
relations between $\Sigma_1$ and $\Omega_1, \Omega_2$,
\bdm
2A \, \Sigma_1 \ = \ - \, [\Omega_1 \, , \, \Omega_2] \, , \ 
[\Sigma_1 \, , \, \Omega_1] \ = \ A \, \Omega_2 \, , \
[\Sigma_1 \, , \, \Omega_2] \ = \ - \, A \, \Omega_2 \, .
\edm
The vector space $\Lin(\Omega_1 , \Omega_2, [\Omega_1,\Omega_2])$ becomes
a Lie algebra and coincides with $\mathfrak{h}$, again a contradiction.
This finishes the proof. 
\end{proof}
\begin{NB}
Any $2$-form $*\sigma_T$ of rank $\geq 4$ defines (locally) in a natural way 
a $\nabla$-parallel almost Hermitian structure $J$ on $(M^6,g)$. N.~Schoemann 
classified in
the paper \cite{Sch07} all admissible torsion forms of almost Hermitian
structures with characteristic torsion, up to  conjugation 
under $\U(3)$. Applying these explicit formulas for $T$, one 
can compute $*\sigma_T $ in all cases. This yields an
alternative, more computational proof of 
Theorem \ref{caseCDn=6}.
\end{NB}
Consider a complete, simply connected Riemannian $6$-manifold $(M^6, g, T)$ 
with parallel
skew torsion $T$, $\rk(* \sigma_T)=6$ and $\ker T = 0$. If the holonomy 
representation
is $\C$-reducible, we obtain again a $\nabla$-parallel decomposition 
$T(M^6) = E^2 \oplus F^4$ 
and the torsion form is given by $T = e_5 \wedge \Omega_1 
+ e_6 \wedge \Omega_2$. The proof of Theorem
 \ref{caseCDn=6} says that the linear space 
$\h:=\Lin(\Omega_1, \Omega_2, [\Omega_1,\Omega_2])$ is
a $3$-dimensional Lie subalgebra of $\so(F^4)$ and that it can be 
conjugated into $\Lambda^2_{\pm}(F^4)$. 
The two possible cases $\h=\Lambda^2_{\pm}(F^4)$ yield torsion 
forms of pure type $\mathcal{W}_1$ or $\mathcal{W}_3$, respectively. 
We conclude that $(M^6,g)$ is 
isometric to $\CP^3$ or $F(1,2)$, the twistor spaces  of the 
$4$-dimensional spaces $S^4$ or $\CP^2$ 
equipped with their nearly K\"ahler metric, see 
\cite{BM01} and \cite[Thm 4.5]{Alexandrov&F&S05} (the two different 
almost complex structures correspond to
different orientations of $J$ in the fibre direction). If the holonomy  
representation
is $\C$-irreducible, there are four possibilities: 
\bdm
\Hol^{\nabla} \ = \  \U(3) , \ \SU(3) , \ \SU(2)/\{\pm 1\} = \SO(3) 
\subset \SU(3), \ \U(2)/\{\pm 1\}  \subset  \U(3).
\edm
The first case gives $T=0$. 
If $\hol^{\nabla} = \su(3)$,  the almost Hermitian structure is of pure 
type $\mathcal{W}_1$ and 
$(M^6,g)$ is isometric to a nearly K\"ahler $6$-manifold with irreducible 
holonomy of $\nabla$. 
The discussion of the other cases provides the following final result:
\begin{thm}[case D -- with parallel skew torsion]\label{Dn=6, classification}
Let $(M^6, g, T)$ be a complete, simply connected Riemannian $6$-manifold 
with parallel
skew torsion $T$, $\rk(* \sigma_T)=6$ and $\ker T = 0$. Then one of the 
following cases occurs:
\begin{enumerate}
\item[] Case $D.1$: $(M^6,g)$ is isometric to a nearly K\"ahler $6$-manifold.
\item[] Case $D.2$: $\hol^{\nabla} = \so(3) \subset \su(3)$ and $(M^6,g)$ is 
naturally reductive. The 
family depends on three parameters
and is of type $\mathcal{W}_1 \oplus \mathcal{W}_3$ in the 
Gray-Hervella classification of 
almost Hermitian structures. 
\end{enumerate}  
Furthermore, $\hol^{\nabla}$ cannot be equal to 
$\mathfrak{u}(2)\subset \mathfrak{u}(3)$ with is $\C$-irreducible representation
on $\C^3$.
\end{thm}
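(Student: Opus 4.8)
The plan is to pick up exactly where the discussion preceding the theorem leaves off. By Theorem~\ref{caseCDn=6}, the hypotheses $\rk(*\sigma_T)=6$ and $\ker T=0$ guarantee that, after rescaling, $*\sigma_T$ is the K\"ahler form of a $\nabla$-parallel almost Hermitian structure $J$ of Gray--Hervella type $\mathcal{W}_1\oplus\mathcal{W}_3$, so $\hol^\nabla\subset\un(3)$. The $\C$-reducible case has already been reduced to the nearly K\"ahler twistor spaces $\CP^3$ and $F(1,2)$ (Case $D.1$), and among the $\C$-irreducible holonomy groups, $\U(3)$ forces $T=0$ (excluded) while $\SU(3)$ yields a nearly K\"ahler manifold (again $D.1$). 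Hence everything comes down to the two remaining $\C$-irreducible possibilities, $\Hol^\nabla=\SO(3)$ (acting on $\C^3$ through the complexified standard representation) and $\Hol^\nabla=\U(2)/\{\pm1\}$ (acting through $\Sym^2\C^2\cong\C^3$), which I would treat separately.

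For $\Hol^\nabla=\SO(3)$ the first task is to prove natural reductivity. Since $\kr$ is symmetric with image in $\hol^\nabla=\so(3)$, it has the form $\kr=\psi\circ\pi$ with $\pi$ the projection onto $\so(3)$ and $\psi\colon\so(3)\to\so(3)$ a symmetric endomorphism. I would then use the first Bianchi identity in its Clifford form (Theorem~\ref{Bianchi-Clifford}), whose degree-four part equates the Bianchi symmetrization of $\kr$ with the $\hol^\nabla$-invariant $4$-form $\sigma_T$, to pin $\psi$ down inside the space of symmetric operators on $\so(3)$. Because $\so(3)$ is simple its adjoint module is irreducible, so by Schur the only $\hol^\nabla$-equivariant such operators are the multiples of the identity; the point of the computation is to check that the Bianchi constraint indeed forces $\psi$ into this one-dimensional invariant subspace. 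Once $\psi$ is a multiple of $\Id_{\so(3)}$, the operator $\kr$ is $\hol^\nabla$-equivariant and the general holonomy principle gives $\nabla\kr=0$, exactly as in Proposition~\ref{prop.sigma1}(3) with simplicity taking over the role of commutativity. To finish this case I would branch $\Lambda^3(\R^6)$ from $\U(3)$ down to $\SO(3)$: as the standard module $\mathcal{W}_4\cong\R^6$ decomposes as $V\oplus V$ with $V$ the irreducible $3$-dimensional representation and hence has no invariant vector, the admissible torsion lies in $\mathcal{W}_1\oplus\mathcal{W}_3$, and counting the $\SO(3)$-invariants there against the curvature scalar and the Bianchi relation should produce the asserted three-parameter family.

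To exclude $\Hol^\nabla=\U(2)/\{\pm1\}$ I would argue by contradiction, starting from the $\Sym^2\C^2$ model, in which the center acts as the scalar complex structure $J$ and $\su(2)$ acts through the irreducible $3$-dimensional complex representation, so $\hol^\nabla=\su(2)\oplus\R$. I would write down the general $\su(2)\oplus\R$-invariant torsion $T$ and the general invariant symmetric curvature operator $\kr$ with image in $\hol^\nabla$, and feed them into the Clifford-algebra Bianchi identity $T^2+\kr\in\R$. Comparing the parts of order four should yield algebraic relations among the invariants that cannot be met together with $\ker T=0$ and $\rk(*\sigma_T)=6$; I expect the outcome to be that either no invariant torsion has trivial kernel or the Bianchi identity degenerates the rank of $*\sigma_T$, in both cases contradicting the standing hypotheses.

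The hard part will be the invariant-theoretic bookkeeping in these last two cases, and in particular the exclusion of $\U(2)/\{\pm1\}$: one must identify precisely the $\su(2)\oplus\R$-invariant subspaces of $\Lambda^3(\R^6)$ and of the symmetric operators on $\Lambda^2(\R^6)$, and then verify that the Bianchi identity has no solution compatible with the equal-eigenvalue conclusion of Theorem~\ref{caseCDn=6}. The branching of $\Lambda^3(\R^6)$ under $\SO(3)\subset\U(3)$ needed for the parameter count and Gray--Hervella type in Case $D.2$ is of the same flavor; I would keep both computations manageable by systematically using the Clifford form of the first Bianchi identity, which reduces each consistency check to a comparison of degree-four terms.
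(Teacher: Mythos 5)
Your plan is essentially the paper's proof: the same case division ($\C$-reducible $\to$ twistor spaces; $\U(3)\to T=0$; $\SU(3)\to$ nearly K\"ahler; then $\SO(3)$ and $\U(2)/\{\pm 1\}$), the same curvature argument for $D.2$ (write $\kr=\psi\circ\pi$, use the Clifford--Bianchi identity to force $\psi$ into the multiples of $\Id_{\so(3)}$, then invoke the holonomy principle to get $\nabla\kr=0$), and the same invariant-theoretic determination of the admissible torsion. Two points, however, deserve attention.

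First, a concrete gap in your parameter count for case $D.2$. The space of $\SO(3)$-invariant $3$-forms in $\mathcal{W}_1\oplus\mathcal{W}_3$ is \emph{four}-dimensional (spanned by $e_{135}$, $e_{246}$ and the two invariant combinations $\eta_1=e_{245}+e_{236}+e_{146}$, $\eta_2=e_{136}+e_{145}+e_{235}$), and the Bianchi relation does not cut this down: it only determines the curvature scalar in terms of the torsion coefficients. So "counting the $\SO(3)$-invariants against the curvature scalar and the Bianchi relation" yields a four-parameter, not a three-parameter, family. The missing step is a normalization: the one-dimensional center of $\U(3)$ centralizes $\SO(3)\subset\SU(3)$ inside $\SO(6)$, hence acts on the $4$-dimensional invariant space (rotating $e_{135}$ into $e_{246}$ and $\eta_1$ into $\eta_2$), and conjugating by a suitable central element kills the $\eta_2$-coefficient. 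Only after this gauge-fixing does the family depend on three parameters, as claimed.

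Second, your treatment of $\U(2)/\{\pm 1\}$ is correct but over-engineered: no Bianchi identity is needed, because the space of $\un(2)$-invariant $3$-forms in $\Lambda^3(\R^6)$ is already zero. Indeed, the center of $\un(2)$ acts on $\C^3\cong\Sym^2\C^2$ as nontrivial scalars, so it has no invariants in $\mathcal{W}_1$; $\SU(2)$-irreducibility of $\Sym^2\C^2$ kills $\mathcal{W}_4\cong\R^6$; and a direct check (this is the content of the cited result of Alexandrov--Friedrich--Schoemann, Thm 4.4, which the paper simply quotes) shows there are no invariants in $\mathcal{W}_3$ either. Hence $T=0$ outright, contradicting $\rk(*\sigma_T)=6$; your anticipated dichotomy "no trivial kernel or degenerate rank" collapses to the stronger statement that no nonzero invariant torsion exists at all.
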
  
\begin{proof}
Let us fix a Lie subgroup $\SO(3)\subset \SU(3)\subset \SO(6)$;
for computations, we choose the Lie subalgebra $\so(3)$ given by
\bdm
H_1\, :=\, -2(e_{35}+e_{46}),\quad 
H_3\, :=\, 2(e_{15}+e_{26}),\quad 
H_5\, :=\, -2(e_{13}+e_{24})
\edm 
with commutators $[H_1,H_3]=-2H_5,\ [H_1, H_5]= 2 H_3,\ [H_3,H_5]=-2H_1$.
The group $\SU(3)$ acts trivially on $\mathcal{W}_1 \subset \Lambda^3(\R^6)$,
which is spanned by the two elements 
%
\bdm
 - e_{135} + e_{245} + e_{236} + e_{146}\ =:\ - e_{135} + \eta_1 ,\quad
 - e_{246} + e_{136} + e_{145} + e_{235}\ =:\ - e_{246} + \eta_2.
\edm
The space of $\SO(3)$-invariant $3$-forms in 
$\mathcal{W}_3 \subset \Lambda^3(\R^6)$ has complex dimension one,
 and, for our choice of $\so(3)$, it is spanned by
$ 3\,e_{135} + \eta_1$ and $3\, e_{246}+\eta_2$ 
\cite[p.7 and Thm 4.4]{Alexandrov&F&S05}.
Hence  the general
 $\SO(3)$-invariant $3$-form in $\mathcal{W}_1 \oplus \mathcal{W}_3$
lies in $\Lin(e_{135}, e_{246},\eta_1,\eta_2)$ and 
depends a priori on $4$ real parameters.  The one-dimensional center
of $\U(3)$ leaves  the Lie algebra $\so(3)\subset \su(3)$ invariant and
its  generator acts on $\Lin(e_{135}, e_{246},\eta_1,\eta_2)$ by transforming 
$e_{135}$ into $e_{246}$
and $\eta_1$ into $\eta_2$. Consequently, it is possible
to conjugate  by a central element of $\U(3)$ in such a way that
$\eta_2$ disappears. 
With this choice,  the general $\SO(3)$-invariant $3$-form in 
$\mathcal{W}_1 \oplus \mathcal{W}_3$ can be parametrized as
\be\label{n=6-torsion}
T\ =\ \alpha\, e_{135}+ \alpha'\, e_{246} + \beta\, (e_{245}+ e_{236}+e_{146}),
\ \  \text{ hence }
\sigma_T\ =\ \beta (\beta-\alpha)\, (e_{1256}+ e_{1234}+e_{3456}).
\ee
We see that we need to require $\beta\neq 0$ and $\alpha\neq \beta$
to ensure that $\rk (*\sigma_T)= 6$. 
The curvature operator $\kr : \Lambda^2(\R^6) \rightarrow \so(3)$
depends a priori on nine parameters $x_{ij}$,
\bdm
\kr \ = \ \sum_{i,j=1}^{3} x_{ij} \, H_i  \odot H_j \, .
\edm
The Bianchi identity $2 \sigma_T = \kr$ modulo scalars in the Clifford 
algebra constrains the coefficients, namely $x_{11}=x_{22}=x_{33}$
and $x_{ij} = 0$ for $i \not= j$. In particular, the curvature operator 
is the projection onto
the subalgebra $\so(3)$. Consequently, $\kr$ is $\hol^{\nabla}$-invariant 
and the space is
naturally reductive. The explicit formula for the curvature is
\be\label{n=6-curvature}
\kr \ = \ \beta(\alpha-\beta) [(e_{35}+e_{46})^2+ (e_{15}+e_{26})^2 + 
(e_{13}+e_{24})^2].
\ee
The torsion and the curvature depend on three parameters and they describe the 
naturally reductive spaces completely. The underlying almost Hermitian 
manifold is of pure type $\mathcal{W}_1$ or $\mathcal{W}_3$ if 
$\alpha+\beta=0, \,\alpha'=0$ or resp.~$ \alpha=3\beta, \,\alpha'=0$.
In the next paragraph we will describe these spaces
as well as the transitive automorphism groups in a more explicit way.
Finally, the case of the irreducible representation 
$\Hol^{\nabla} = \U(2)/\{\pm 1\} \subset 
\U(3)$ cannot occur. Indeed, a $\mathcal{W}_1$-component of the torsion
is then impossible, and there is no  
admissible torsion in $\mathcal{W}_3\subset\Lambda^3(\R^6)$ either, see
\cite[Thm 4.4]{Alexandrov&F&S05}.
\end{proof}
\begin{NB}\label{NB-nK}
Homogeneous nearly K\"ahler $6$-manifolds have been classified in 
\cite{Butruille05}. 
The list is short: $S^6 , S^3 \times S^3, \CP^3$, and the flag manifold 
$F(1,2)=\U(3)/\U(1)^3$, all equipped with their unique strict nearly K\"ahler 
metrics. It is well known that they are  naturally reductive,
and that they are precisely the Riemannian $3$-symmetric spaces. 
Using the result of Butruille and the previous Theorem,  we immediately 
obtain the
classification of all naturally reductive spaces in case D.1.
\end{NB}
\begin{NB}\label{NB-curvature}
All naturally reductive spaces described in Theorem
\ref{Dn=6, classification} are Einstein with parallel skew torsion
(see \cite{Agricola&F12} for details on this notion). Indeed, the
expression (\ref{n=6-curvature}) for the curvature implies that the Ricci
tensor of the connection $\nabla$ is given by
\bdm
\Ric = -2\beta(\alpha- \beta)\,\Id .
\edm
The difference $\Ric-\Ric^g$ depends only on the torsion tensor;
one checks that it is a multiple of the identity if and only if
$\alpha'=0$ and $\alpha=\pm \beta$. Hence, these are the only situations
where the space will be Riemannian Einstein, and these are
well known: $\alpha'=0,\alpha= \beta $ is the bi-invariant metric on
$S^3\x S^3$ (recall that $\kr=0$ in this case), 
while $\alpha'=0,\alpha= -\beta $ corresponds to nearly
K\"ahler manifolds.
\end{NB}
We shall now carry out the Nomizu construction (see Appendix \ref{app.nomizu})
for the naturally
reductive  spaces covered by case D.2 of the previous theorem;
we keep the Ansatz for the torsion (equation (\ref{n=6-torsion})) and 
the notation of the proof.  The only non-vanishing commutators on
$\Lin (H_1,H_3,H_5) \oplus \Lin(e_1,\ldots, e_6) $   are then (besides
the commutators of the elements $H_i$ already mentioned in the above proof) 
\begin{align*}
[e_1,e_3] = \frac{\beta(\alpha-\beta)}{2}H_5 - \alpha e_5, & \ \ \
[e_2,e_4] = \frac{\beta(\alpha-\beta)}{2}H_5 - \beta e_5 -\alpha' e_6, &  
[e_1,e_4] = [e_2,e_3] = -\beta e_6,\\
[e_1,e_5] = \frac{\beta(\beta-\alpha)}{2}H_3 + \alpha e_3, & \ \ \
[e_2,e_6] = \frac{\beta(\beta-\alpha)}{2}H_3 + \beta e_3+\alpha' e_4, & \ \ 
[e_1,e_6] = [e_2,e_5] = +\beta e_4,\\
[e_3,e_5] = \frac{\beta(\alpha-\beta)}{2}H_1 - \alpha e_1, & \ \ \
[e_4,e_6] = \frac{\beta(\alpha-\beta)}{2}H_1 - \beta e_1 -\alpha' e_2, &  
[e_3,e_6] = [e_4,e_5] = -\beta e_2,
\end{align*}
as well as
\begin{align*}
[e_5,H_3]=[H_5,e_3]=2 e_1, & \ \ \quad [e_1,H_5]=[H_1,e_5]=2e_3, 
& [e_3,H_1]=[H_3,e_1]=2e_5,\\
[e_6,H_3]=[H_5,e_4]=2e_2, & \ \ \quad [e_2, H_5]=[H_1, e_6]=2 e_4, 
& [e_4, H_1]=[H_3,e_2]=2e_6.
\end{align*}
In the next step, we  find a $6$-dimensional Lie subalgebra. 
Define $\Omega_i:= e_i+\frac{\beta-\alpha}{2}H_i$ for $i=1,3,5$,
$\h:= \Lin(\Omega_1, \Omega_3,\Omega_5)$, and
$\m:=\Lin(e_2,e_4,e_6)$. Then $\g:=\h\oplus\m$ is indeed $6$-dimensional
and, as the following non-vanishing commutators show, closed under
the Lie bracket. More precisely, $\h$ is a $3$-dimensional Lie subalgebra, 
\bdm
[\Omega_1,\Omega_3]\, =\,  (\alpha-2\beta)\Omega_5,\quad
[\Omega_1,\Omega_5]\, =\,  (2\beta-\alpha)\Omega_3,\quad
 [\Omega_3,\Omega_5]\, =\,  (\alpha-2\beta)\Omega_1,\quad
\edm
that is either abelian ($\alpha=2\beta$) or isomorphic to $\so(3)$ 
($\alpha\neq 2\beta$). The space $\m$ is a reductive complement of $\h$
inside $\g$,
\bdm
[\Omega_1, e_4] =  [e_2, \Omega_3] = (\alpha-2\beta)e_6, \ 
[\Omega_1, e_6] =  [e_2,\Omega_5] = (2\beta-\alpha) e_4, \ 
[\Omega_3, e_6] = [e_4,\Omega_5] = (\alpha - 2\beta) e_2.
\edm
The remaining commutators of elements from $\m$ are
\bdm
[e_2, e_4] = -\beta\Omega_5-\alpha' e_6,\quad
[e_2,e_6] = \beta\Omega_3 + \alpha' e_4, \quad
[e_4, e_6] = -\beta \Omega_1-\alpha' e_2.
\edm
Since $\g\cap\Lin (H_1,H_3,H_5)=0$, we can conclude that $M^6$ is 
isometric to the $6$-dimensional simply connected Lie group $G$ with
Lie algebra $\g$ equipped with 
a left-invariant metric. To determine $G$, we compute
the Killing form $\tilde\beta(x,y):=\tr (\ad x \circ\ad y)$
in the ordered basis $\Omega_1, \Omega_3,\Omega_5,e_2,e_4,e_6 $ of $\g$,
\bdm
\tilde\beta\ =\  
 \left[ \begin{array}{@{}c|c@{}}
-4(\alpha-2\beta)^2 \cdot 1_3     
& 2 \alpha' (\alpha-2\beta)\cdot 1_3   \\ \hline
2 \alpha' (\alpha-2\beta)\cdot 1_3      
& \big(4\beta(\alpha-2\beta)-2 {\alpha'}^2 \big)\cdot 1_3
    \end{array} \right]\, .
\edm
The matrix $\tilde\beta$ always has two eigenvalues of multiplicity $3$
each, hence $\rk \tilde\beta= 0 , 3,$ or $6$.
We can immediatly identify $G$ if $\rk \tilde\beta= 6$, for then
it has to be a semisimple Lie algebra with either negative
definite or split Killing form. These are exactly
$\so(3)\oplus \so(3)$ or $\so(3,1)\cong\sl(2,\C)$. Later,
we will give an alternative argument for this result. 
Let us look at
the determinant  to identify the cases  when
$\tilde\beta$ does not have full rank, i.\,e.~when $G$ is not
semisimple,
\bdm
\det \tilde\beta\ =\  -64 (\alpha-2\beta)^6 (4\beta(\alpha-2\beta) 
- {\alpha'}^2)^3.
\edm
Therefore, there are  three singular cases to be discussed 
separately: $\alpha-2\beta=0$,   $4\beta(\alpha-2\beta) - {\alpha'}^2=0$,
or both conditions simultaneously.
Let us first treat the last case, i.\,e.~$\alpha=2\beta$ and $\alpha'=0$. 
Then $\g$ is a 2-step nilpotent Lie algebra. 
With respect to the new basis $a_1=e_2, a_2=e_4, a_3=e_6, a_4=\beta\Omega_5,
a_5=-\beta\Omega_3, a_6=\beta\Omega_1$, it can alternatively be described
by
\bdm
da_1\, =\, da_2\, =\, da_3\, =\, 0,\quad da_4\, =\, a_{12}, \quad
da_5\, =\, a_{13}, \quad da_6\, =\, a_{23}.
\edm
Thus, it corresponds in standard notation to the nilpotent Lie algebra
$(0,0,0,12,13,23)$. Alternatively, it can be described as $\R^3\x\R^3$
with the commutator
\bdm
[(v_1, w_1), (v_2, w_2)]\ =\ (0, v_1\x v_2).
\edm
In this realisation, it may be found as an example in Schoemanns' work
\cite[Example 4.13, p.2208]{Sch07}. Up to a constant, the metric is 
unique.

Let us now consider one of the two cases where $\rk \tilde\beta=3$,
namely, $\alpha=2\beta$, but $\alpha'\neq 0$. Then three blocks of
$\tilde\beta$ vanish, and the lower right $(3\x 3)$ block 
is $-2{\alpha'}^2\cdot 1_3$. In the new basis
$\Omega_1,\Omega_3, \Omega_5, f_2:= \beta\Omega_1+\alpha'e_2,
f_4=\beta\Omega_3+\alpha'e_4, f_6:= \beta\Omega_5+\alpha'e_6,$
the only non-vanishing Lie brackets are
\bdm
[f_2,f_4]\ =\ -{\alpha'}^2f_6,\quad
[f_2,f_6]\ =\ {\alpha'}^2f_4,\quad
[f_4,f_6]\ =\ -{\alpha'}^2f_2.
\edm
We conclude that $\g=\R^3\x \so(3)$, hence $G$ is the direct
product $\R^3\x S^3$.

We shall now discuss the case $\alpha\neq 2\beta$; as it turns
out, the value of $\alpha'$ will only matter at the very end.
Instead of looking at $\g=\h\oplus \m$ itself, we look at the
homogeneous space it defines: we observed before that $\m$
is a reductive complement of the Lie subalgebra $\h$, and
by the assumption $\alpha\neq 2\beta$, the representation of
$\h$ on $\m$ is irreducible and $\h$ is isomorphic to $\so(3)$.
Hence, $\h$ defines a \emph{compact} (in particular, closed) subgroup of 
$G$, and $G/H=:P^3$ is a  $3$-dimensional manifold, of course homogeneous. 
Recall that the action of $G$ on $G/H$ is effective if and only if
$\h$ contains no nontrivial ideal of $\g$. This is obviously
the case if $\alpha\neq 2\beta$, making $G$ a subgroup 
of the isometry group of
$P^3$. However, the isometry group of a $3$-dimensional manifold is
at most $6$-dimensional. We obtain the remarkable
result that $ \widetilde{\mathrm{Iso}(P^3)} = G$ has maximal dimension.
Then
$P^3$ has to be a space of constant curvature, 
i.\,e.~$P^3= \R^3, S^3$, or $\mathbb{H}^3$. Their isometry
groups are well-known, and their universal coverings are
\bdm
G\ =\ S^3\ltimes \R^3,  \ \ 
\widetilde{\SO(4)}=S^3\x S^3, \ \ \text{ or }\  \SL(2,\C).  
\edm
The semidirect product  corresponds to the singular case
$4\beta(\alpha-2\beta) - {\alpha'}^2=0$ that we had postponed, while
the other two correspond to non-singular choices of $\alpha,\alpha',$
and $\beta$ as stated before. To see explicitly how the Lie algebra
satisfying $4\beta(\alpha-2\beta)= {\alpha'}^2\neq 0$ is isomophic
to the semidirect product  $S^3\ltimes \R^3$, one performs the
basis change 
\bdm
g_i\ :=\  e_i+ \frac{\alpha'}{2(\alpha-2\beta)}\, \Omega_{i-1},\quad i=2,4,6,
\quad \Omega_1,\ \Omega_3,\ \Omega_5\ \text{ unchanged}
\edm
and checks that $[g_i,g_j]=0$ for $i,j=2,4,6$ and that the elements
of $\Lin( \Omega_1,\Omega_3,\Omega_5)\cong \so(3)$ act on $g_j$ by the 
adjoint representation.
We summarize the result:
\begin{thm}[case D.2 -- classification]\label{thm.n=6-D.2-class}
%
A  complete, simply connected Riemannian $6$-manifold $(M^6, g, T)$
with parallel
skew torsion $T$, $\rk(* \sigma_T)=6$ and $\ker T = 0$
that is \emph{not} isometric to a nearly K\"ahler manifold
is one of the following Lie groups with a suitable family of 
left-invariant metrics:
\begin{enumerate}
\item The nilpotent Lie group with Lie algebra  $\R^3\x \R^3$
with commutator $[(v_1, w_1), (v_2, w_2)]\ =\ (0, v_1\x v_2)$
(see \cite{Sch07}),
\item the direct or the semidirect product of $S^3$ with  $\R^3$,  
\item the product $S^3\x S^3$  (described in Section \ref{S3xS3}),
\item the Lie group $\SL(2,\C)$ viewed as a $6$-dimensional
real manifold  (described in Section \ref{SL(2,C)}).
\end{enumerate}  
\end{thm}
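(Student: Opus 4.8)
The plan is to build directly on Theorem~\ref{Dn=6, classification}, which already guarantees that in case D.2 the holonomy is $\so(3)\subset\su(3)$, the space is naturally reductive, and that both torsion and curvature are given by the explicit three-parameter formulas (\ref{n=6-torsion}) and (\ref{n=6-curvature}) in $\alpha,\alpha',\beta$. The remaining task is then \emph{purely} a matter of identifying the underlying Lie group, so the natural tool is the Nomizu construction (Appendix~\ref{app.nomizu}): from the data $(\so(3),\R^6,T,\kr)$ one assembles the Lie algebra $\g=\so(3)\oplus\R^6$ and reads off all brackets directly from $T$ and $\kr$. Since $M^6$ is complete and simply connected, identifying $\g$ together with a transversal subalgebra will pin down the simply connected transitive group uniquely.

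First I would compute the full bracket table on $\Lin(H_1,H_3,H_5)\oplus\Lin(e_1,\dots,e_6)$, a routine but bookkeeping-heavy calculation from (\ref{n=6-torsion}) and (\ref{n=6-curvature}). The key structural step is to exhibit a six-dimensional subalgebra transversal to the isotropy $\so(3)$: I would look for a reductive complement $\h\oplus\m$ with $\h$ the twisted-diagonal span of $\Omega_i:=e_i+\frac{\beta-\alpha}{2}H_i$ ($i=1,3,5$) and $\m:=\Lin(e_2,e_4,e_6)$, and check closure under the bracket. Because this subalgebra meets the isotropy trivially, its simply connected group acts simply transitively, so $M^6$ is a Lie group with a left-invariant metric.

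To name the group I would compute the Killing form $\tilde\beta$ in the basis $\Omega_1,\Omega_3,\Omega_5,e_2,e_4,e_6$; by the $\so(3)$-symmetry it is block-scalar with two eigenvalues of multiplicity three, so $\rk\tilde\beta\in\{0,3,6\}$. The rank-six case forces semisimplicity, and the signature distinguishes the compact form $\so(3)\oplus\so(3)$ (giving $S^3\times S^3$) from the split form $\so(3,1)\cong\sl(2,\C)$. The determinant $\det\tilde\beta=-64(\alpha-2\beta)^6\bigl(4\beta(\alpha-2\beta)-{\alpha'}^2\bigr)^3$ isolates the singular loci $\alpha=2\beta$ and $4\beta(\alpha-2\beta)={\alpha'}^2$, treated one at a time: $\alpha=2\beta,\ \alpha'=0$ yields the $2$-step nilpotent algebra $(0,0,0,12,13,23)$; $\alpha=2\beta,\ \alpha'\neq0$ yields $\R^3\times\so(3)$, hence $G=\R^3\times S^3$; and the remaining locus yields the semidirect product $S^3\ltimes\R^3$.

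I expect the main obstacle to lie in the degenerate strata rather than the generic rank-six case. The cleanest route for $\alpha\neq2\beta$ avoids matrix identification entirely: since $\h\cong\so(3)$ acts irreducibly on $\m$ and contains no ideal of $\g$, the quotient $P^3=G/H$ is a three-dimensional homogeneous space whose connected isometry group attains the maximal possible dimension six, forcing $P^3$ to have constant curvature $\R^3,\ S^3$, or $\H^3$; reading off the universal covers of their isometry groups recovers $S^3\ltimes\R^3$, $S^3\times S^3$, or $\SL(2,\C)$. The delicate point is to match this geometric trichotomy with the algebraic singular loci---in particular verifying that the semidirect product corresponds precisely to $4\beta(\alpha-2\beta)={\alpha'}^2\neq0$ via an explicit change of basis that abelianizes $\m$ and makes $\Lin(\Omega_1,\Omega_3,\Omega_5)\cong\so(3)$ act by the adjoint representation---thereby tying the two analyses together.
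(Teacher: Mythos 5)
Your proposal is correct and follows essentially the same route as the paper: the Nomizu construction from $(\ref{n=6-torsion})$ and $(\ref{n=6-curvature})$, the transversal subalgebra spanned by $\Omega_i=e_i+\frac{\beta-\alpha}{2}H_i$ and $e_2,e_4,e_6$, the Killing-form rank/determinant analysis with the same singular loci, and the identification of the generic stratum via the maximal-dimensional isometry group of the constant-curvature quotient $P^3=G/H$. The paper handles the degenerate strata $\alpha=2\beta$ (with and without $\alpha'=0$) and the locus $4\beta(\alpha-2\beta)={\alpha'}^2$ exactly as you outline, including the abelianizing change of basis $g_i=e_i+\frac{\alpha'}{2(\alpha-2\beta)}\Omega_{i-1}$ for the semidirect product.
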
  
\begin{NB}
Large families of half-flat almost complex structures on 
$S^3\x S^3$ were constructed in \cite{Schulte-Hengesbach10} and \cite{MaSa};
they overlap those described in Section \ref{S3xS3}, but it is not evident
how to test which have parallel torsion or curvature. 
The nilpotent example (1) is rather exceptional:  as explained in
\cite{Sch07}, nilmanifolds have almost never parallel torsion.
\end{NB}
%
\section{Explicit realisation of the occurring naturally reductive spaces}
%
This section compiles all naturally reductive homogeneous spaces of
dimension five and six (with the exception of a few degenerate, 
not-so-interesting cases like most products etc.); 
some of them occur in families that generalise to higher dimensions.
%
\subsection{The Stiefel $5$-manifold}\label{Stiefel-type}
%
In what follows, we will study the homogeneous space 
\bdm
M^5 \ =\  \dfrac{\SO(3) \times \SO(3)}{\SO(2)_r}.
\edm
It appears in the 5-dimensional classification as the compact
representative of case (2) in the Classification  
Theorem \ref{thm.B1-classification}.  Here, we denote by $\SO(2)_r$ the subgroup of 
$\SO(3)\times \SO(3)$ consisting of products of matrices of the form 
\bdm
g(t)\, = \, \left( \begin{bmatrix} \cos t & - \sin t & 0\\ 
\sin t & \cos t & 0 \\ 0 & 0 & 1  \end{bmatrix}, 
\begin{bmatrix} \cos rt & - \sin rt & 0\\ \sin rt & \cos rt & 0 \\ 
0 & 0 & 1  \end{bmatrix} \right),
\edm
where $r$ is a rational parameter. The condition $r\in \Q$ guarantees 
that $\SO(2)_r$ is closed inside $\SO(3)\times \SO(3)$. 
Let $\{e_{12}, e_{13}, e_{23}\}$ be the standard basis of $\so(3)$.
The Lie algebra $\so(2)_r=:\h$ embeds inside 
$\so(3)\oplus\so(3)=:\g$ as the one-dimen\-sional Lie algebra generated by
$h:=(e_{12}, re_{12})$. As invariant scalar product, we choose
the multiple of the Killing form $K(X,Y) := \frac{1}{2} \tr(X^t Y)$, so 
that the $e_{ij}$ have norm 1. Denote by $\m$
the linear span of the elements
\bdm
x_1 = (e_{13},0), \ x_2= (e_{23},0), \  
y_1 = (0, e_{13}), \ y_2=(0, e_{23}), \  z = \frac{1}{\sqrt{r^2+1}} 
(-r e_{12}, e_{12}).
\edm
They are an orthornomal basis of $\m$  and one checks
that $\m$ is a reductive orthogonal complement of $\h$ in $\g$, $\g = \h 
\oplus \m$.
Let us now introduce two parameters in our metric, which we shall call 
$g_{a,b}$, $a, b >0$, or simply $g$ if the parameters are understood,  
by prescribing that the following is an orthonormal basis
$$ u_1 = \sqrt{a}\, x_1, \quad u_2 = \sqrt{a}\, x_2,\quad v_1 = \sqrt{b}\, 
y_1, \quad v_2 = \sqrt{b}\, y_2, \quad \xi = z  $$
for a fixed pair of parameters $a,b>0$. Of course, we could introduce a third
parameter $c>0$ of the metric in direction $z$, but we normalize it to $1$.
As for the projection of the commutators of elements of $\m$ to both 
$\h$ and $\m$ we have the following non-vanishing terms
\bdm
\begin{array}{lll} 
\,[u_1, u_2] = - \frac{a\, r}{\sqrt{r^2+1}}\,\xi +\frac{a}{r^2+1}\, h &
[u_1,\xi] = \frac{r}{\sqrt{r^2+1}}\, u_2, & 
[u_2, \xi] =  -  \frac{r}{\sqrt{r^2+1}}\, u_1,\\ 
\, [v_1, v_2] =  \frac{b}{\sqrt{r^2+1}}\,  \xi +\frac{b\,r}{r^2+1}\, h & 
[v_1, \xi] = -\frac{1}{\sqrt{r^2+1}}\, v_2,  & 
[v_2, \xi] =  \frac{1}{\sqrt{r^2+1}}\, v_1.  \\ 
\end{array}
\edm
Comparing the example  with the Classification Theorem 
\ref{thm.B1-classification},
we see that the parameters are related by the formulas 
$\lambda = \frac{r a}{\sqrt{r^2+1}}$ 
and $\varrho = -\frac{b}{\sqrt{r^2+1}}$. 
Observe that, as it stands, we can only conclude that we have a 
naturally reductive metric for $a=b=1$ (see Remark \ref{NBIsotropy}).
To describe the
isotropy representation $\SO(2)_r \ra \SO(\m)$, we identify 
$\m$ with $\R^5$, and choose the ordering  $\{u_1,u_2,v_1,v_2,\xi \}$. 
Denote by 
$E_{i,j}$, ($i< j,\ i,j =1, \ldots, 5$) the standard basis of 
$\so(5)$, see Remark \ref{notations}.
The linear isotropy representation $\lambda: \h \lra \mathfrak{so}(\m)$
may then be expressed as
\bdm
\lambda (h) = E_{12} + r\, E_{34}.  
\edm
Let $\alpha_i$, $\beta_i$  and $\eta$ be the dual forms of $u_i$, $v_i$ 
and $\xi$, respectively ($i=1,2$). The vector $\xi$ is fixed under the 
isotropy representation, so it defines a global vector field which 
gives a  preferred direction in the tangent bundle. The forms 
$\eta$, $\alpha_1 \wedge \alpha_2$ and $\beta_1 \wedge \beta_2$ are also 
globally defined. Hence the tensor 
\bdm
\varphi = \alpha_1 \otimes u_2 - \alpha_2 \otimes u_1 
+ \beta_1 \otimes v_2 - \beta_2 \otimes v_1
\edm
is a well-defined almost contact structure which can be readily checked 
to be compatible with the metric $g_{a,b}$. The fundamental form $F$ is 
given by $F= - (\alpha_ 1 \wedge \alpha_2 + \beta_1 \wedge \beta_2)$
 which is always closed. The exterior derivative of the contact form 
$\eta$  is given by
\bdm
d\eta = \frac{a r}{\sqrt{r^2+1}}\, \alpha_1 \wedge \alpha_2 
- \frac{b}{\sqrt{r^2+1}}\, \beta_1 \wedge \beta_2.
\edm
Hence, $F$ and $d\eta$ are proportional if and only if $a r +b=0$, whereas 
the Nijenhuis tensor vanishes for all values. 
Therefore, $M^5$ is a quasi-Sasaki manifold, and it is
$\alpha$-Sasaki if $ar+b=0$. 
We can calculate the map $\Lambda^g: \ \m\ra\so(\m)$ describing the 
Levi-Civita connection  by means of the expression \cite[Ch.X]{Kobayashi&N2} 
\bdm
\Lambda^{g}(X) Y = \frac{1}{2} [X,Y]_\m + \frac{1}{2} U(X,Y),
\edm
where $U$ is such that $g(U(X,Y),Z) = g([Z,X],Y) + g([Z,Y],X)$.  
Then we have the following connection forms
\bdm
\begin{array}{lll} \Lambda^g(u_1) = -\frac{ar}{2 \sqrt{r^2+1}} E_{25}, &
  \Lambda^g(u_2) = \frac{ar}{2 \sqrt{r^2+1}} E_{15}, & 
\Lambda^g(v_1) = \frac{b}{2 \sqrt{r^2+1}} E_{45},\\ 
\Lambda^g(v_2) = - \frac{b}{2 \sqrt{r^2+1}} E_{35}, & \Lambda^g(\xi) =
\frac{r(a-2)}{2\sqrt{r^2+1}} E_{12} + \frac{2-b}{2 \sqrt{r^2+1}} E_{34}, &
\end{array}
\edm
and we see that $\xi$ is a Killing field. Consequently, 
the quasi-Sasaki structure
admits a characteristic connection $\nabla$ whose torsion is given by 
\cite{FI02} 
\bdm
T= \eta \wedge d\eta =  \frac{a r}{\sqrt{r^2+1}}\, \eta \wedge \alpha_1 
\wedge \alpha_2 - \frac{b}{\sqrt{r^2+1}}\,\eta \wedge \beta_1 
\wedge \beta_2,
\edm
and it is described by the linear map $\Lambda$ 
\bdm
\Lambda(u_1) = \Lambda(u_2) = \Lambda(v_1) = \Lambda(v_2) = 0,\quad 
\Lambda(\xi) = \frac{r (a-1)}{\sqrt{r^2+1}} E_{12} 
+ \frac{1-b}{\sqrt{r^2+1}} E_{34}.
\edm
We immediately see (as expected) that $T$ is indeed parallel. For the 
curvature tensor $\kr : \Lambda^2 \rightarrow \Lambda^2$ we use the 
following formula
$$\kr(X,Y) = [\Lambda(X), \Lambda(Y)] - \Lambda([X,Y]_\m) -
 \lambda([X,Y]_\h) \ .$$ 
Its image is contained in the $2$-dimensional abelian Lie algebra 
generated by $\alpha_1 \wedge \alpha_2$
and $\beta_1 \wedge \beta_2$ 
$$\kr = \frac{a (a-1) r^2 - a}{r^2+1} (\alpha_1 \wedge \alpha_2)^2 
+ \frac{b(b-1) - b r^2}{r^2+1} 
(\beta_1 \wedge \beta_2)^2 
- \frac{abr}{r^2+1} (\alpha_1 \wedge \alpha_2 \odot \beta_1 \wedge \beta_2).$$
We have $\nabla \kr = 0 = \nabla T$, i.e. a 2-parameter 
family of naturally reductive metrics on the manifold $M^5$ (a third 
parameter $c$ is allowed,
but yields only a global rescaling of the metric). 
We can easily compute the Ricci tensor for $\nabla^g$, it yields 
$$\Ric^g = \mathrm{diag}\left(a - \frac{a^2 r^2}{2(r^2+1)}, a 
- \frac{a^2 r^2}{2(r^2+1)}, b - \frac{b^2 }{2(r^2+1)}, 
b - \frac{b^2 }{2(r^2+1)}, \frac{a^2 r^2}{2(r^2+1)} 
+ \frac{b^2}{2(r^2+1)} \right).$$ 
If we look for Einstein metrics, we get a system of two equations. A detailed
discussion tells that $a$ can be expressed through $b$, and
that $b$ is the unique real solution of a cubic equation
\bdm
a\ =\ b \left[ 2 - \frac{3}{2(r^2+1)} b\right],\quad
r^2 b \left[ 2 - \frac{3}{2(r^2+1)} b\right]^2 \ =\ 
2 (r^2+1) - 2b.
\edm
Hence there is exactly one pair of values $(a,b)$ for any parameter
value of $r^2$. For example, we check that  $r^2 =1$ yields
 $a=b =4/3$, and this is the 
Einstein-Sasaki metric of \cite{Jensen75} and 
\cite{Friedrich80}.
%
\subsection{The Berger sphere $S^5=\U(3)/\U(2)$}\label{berger}
%
The Berger sphere (and its non-compact sibling $\U(2,1)/\U(2)$) is the only 
$5$-dimensional naturally reductive space that is $\alpha$-Sasakian (case B.2),
but not part of a quasi-Sasakian family (case B.1), see 
Theorem \ref{n=5EWgleich}. We sketch two alternative ways to view the Berger 
sphere; since this is a relatively well-known example, we shall be brief.
Consider the pair of Lie algebras
\bdm
\mathfrak{u}(3) \ \cong \ \Big\{A \in \mathcal{M}_3(\C) \, : \, A  
+  \bar{A}^t  =  0 \Big\}, \quad
\mathfrak{u}(2) \ \cong \ \left\{  \begin{bmatrix} 0 & 0 \\ 
0 & B   \end{bmatrix} \, : \ B \in \mathcal{M}_2(\C)  , \,  
B  +  \bar{B}^t \ = \ 0 \right\}.
\edm
First, we show that the realisation of $S^5$ as the quotient $\U(3)/\U(2)$
is already naturally reductive (in the traditional sense). Indeed,
there are two elements commuting with $\mathfrak{u}(2)$ and a complementary 
$4$-dimensional subspace,
\bdm
Z_1 \ = \  \begin{bmatrix} i & 0 & 0 \\ 
0 & i & 0 \\ 0 & 0 & i  \end{bmatrix} , \quad
Z_2 \ = \  \begin{bmatrix} 0 & 0 & 0 \\ 
0 & i & 0 \\ 0 & 0 & i  \end{bmatrix} , \quad
\mathfrak{m}_0 \ = \ \left\{ \begin{bmatrix} 0 & - \bar{v}^t \\ 
v & 0   \end{bmatrix} \, : \ v \in \C^2 \right\} .
\edm
Any subspace of the family
\bdm
\mathfrak{m}_{\alpha, \beta} \ = \ \mathfrak{m}_{0}  \oplus \R \cdot 
(\alpha Z_1  + \beta Z_2) , \quad \alpha \ \not= \ 0 
\edm
decomposes the Lie algebra $\un(3) = \un(2) \oplus \m_{\alpha, \beta}$,
and $[\un(2) , \m_{\alpha, \beta}] \subset \m_{\alpha, \beta}$ holds. The scalar 
product on the complement $\mathfrak{m}_{\alpha, \beta}$ is defined by a 
bi-invariant scalar product on $\mathfrak{u}(3)$.
This is a $2$-parameter family of naturally reductive structures, see Theorem 
\ref{n=5EWgleich}. Alternatively, one can realize the $5$-sphere
as $S^5=\SU(3)/\SU(2)$ with the same embeddings as above, now with vanishing
trace. Then one chooses the reductive decomposition
\bdm
\su(3)\ =\ \su(2)\oplus\m, \quad  \m = \m_0\oplus \langle \eta\rangle\ 
\text{ with } \eta= \frac{1}{\sqrt{3}}\, \diag (-2i,i,i)\ =\ 
\frac{1}{\sqrt{3}}\, (-2 Z_1+3Z_2).
\edm
As basis of $\m_0$, we choose the elements $e_1,\ldots,e_4$ corresponding
to the vectors $v=(1,0)$, $(i,0)$, $(0,1)$, $(0,i)\in \C^2$. With
respect to the Killing form $\beta(X,Y)=-\tr(XY)/2$ of $\su(3)$,
the vectors $e_1, \ldots, e_3, \eta$ are orthonormal. Thus,
we can define a deformation of the scalar product by
\bdm
g_\gamma \ := \ \beta\big|_{\m_0} \oplus \frac{1}{\gamma}
\beta\big|_{\langle \eta\rangle} , \quad \gamma>0.
\edm
Again, a second parameter could be introduced by allowing a rescaling
on $\m_0$, but this has no intrinsic geometrical meaning. Now one checks
that $F := e_{12}+e_{34}$ with Reeb vector field 
$\tilde\eta=\eta/\sqrt{\gamma}=:e_5$
defines an $\alpha$-Sasaki structure with characteristic connection
$\nabla$; as an invariant connection, $\nabla$ is described by
the map $\Lambda: \ \m\ra\so(\m)$  (see \cite[Ch.X]{Kobayashi&N2}) 
given by $\Lambda(e_i)= 0$ for $i=1,\ldots,4$ and 
$\Lambda(e_5)= (\sqrt{3/\gamma}-\sqrt{3\gamma})(E_{12}+E_{34})$.
The torsion and  curvature of the connection $\nabla$ are given by
\bea[*]
T & =& \tilde\eta\wedge d\tilde \eta\ =\ 
\sqrt{3/\gamma} (e_{12}++e_{34})\wedge e_5, \\
\kr & = & \left[\frac{3}{\gamma}-3\right](e_{12}+e_{34})^2 -
[(e_{13}+e_{24})^2 + (e_{14}-e_{23})^2 + (e_{12}-e_{34})^2].
\eea[*]
Comparing with Theorem \ref{n=5EWgleich} and formula ($\ref{kr.n=5EWgleich}$),
we see that $a=-1, b=3 / \gamma-3$ and the coefficient $\vrho=\lambda$
of the torsion  is given by $\vrho^2=3/\gamma$.
%
%
One verifies that $T$ and  $\kr$ are indeed
$\nabla$-parallel. For $\gamma=1/2$, the $\nabla$-Ricci curvature
vanishes, while for $\gamma=3/4$, the metric is Riemannian Einstein.
This is a particular case of the deformation of Einstein-Sasaki
metrics that yield $\nabla$-Ricci-flat connections described in
\cite{Agricola&F12}.
%
\subsection{The Heisenberg group of dimension $2n+1$}\label{odd-Heisenberg}
%
The Heisenberg group of dimension $2n+1$ is the subgroup of $GL(n+2,\R)$ 
given by upper triangular matrices of the form
$$H^{2n+1} = \left\{  \begin{bmatrix} 1 & x^t & z \\ 
0 & 1 & y \\ 0 & 0 &1 \end{bmatrix}\ :\ x,y\in\R^n,\, z\in \R\right\}.$$ 
It appears in the classifications in dimension $3$ (Theorem \ref{thm.class-dim3})
and dimension $5$ (Theorem \ref{thm.B1-classification}, case B.1).
Clearly $H^{2n+1}$ is diffeomorphic to $\R^{2n+1}$. It can be readily checked 
that the following sets are, respectively, a basis of left-invariant vector 
fields and its dual basis of left-invariant $1$-forms,
\bdm
\left\{ \frac{\partial}{\partial x_1}, \frac{\partial}{\partial y_1}+x_1 
\frac{\partial}{\partial z},\frac{\partial}{\partial x_2}, 
\frac{\partial}{\partial y_2}+x_2 \frac{\partial}{\partial z} \cdots, 
\frac{\partial}{\partial z}\right\},\quad 
\left\{ dx_1, dy_1, dx_2, dy_2, \cdots, dz - \sum_{i=1}^n x_i dy_i \right\}.
\edm
We have an odd-dimensional manifold with a clearly preferred direction and we 
can define a contact structure as follows. Choose 
$\xi = \frac{\partial}{\partial z}$ to be the Reeb field and 
$\eta = dz- \sum_{i=1}^n x_i dy_i $ to be the contact form. The (1,1)-tensor 
$$\varphi = \sum_{i=1}^n \left( dx_i \otimes \left( 
\frac{\partial}{\partial y_i} + x_i \frac{\partial}{\partial z} \right) 
- dy_i \otimes \frac{\partial}{\partial x_i} \right)$$ 
is then a contact structure on $H^{2n+1}$, since  
$\varphi^2 = - \mathrm{Id} + \eta \otimes \xi$. For each $i=1,\cdots, n$ 
let $\lambda_i$ be a positive scalar and consider the $n$-tuple 
$\lambda=(\lambda_1, \cdots, \lambda_n)$. Then
$$g_\lambda = \sum_{i=1}^n \frac{1}{\lambda_i} (dx_i^2 + dy_i^2) 
+ \left[ dz- \sum_{j=1}^n x_j dy_j\right]^2$$ 
defines an $n$-parameter family of metrics which are compatible with 
$\varphi$. Hence 
$(H^{2n+1}, g, \varphi, \xi, \eta)$ is an almost contact metric manifold.
An orthonormal frame $\{u_i, v_i, \xi\}$ of $TH^{2n+1}$ 
and its  dual frame $\{\alpha_i, \beta_i, \eta\}$ are given by  
\bdm
u_i = \sqrt{\lambda_i} \frac{\partial}{\partial x_i},\quad
v_i = \sqrt{\lambda_i} \left(\frac{\partial}{\partial y_i} 
+ x_i \frac{\partial}{\partial z}\right) \text{ and }
\alpha_i = \frac{1}{\sqrt{\lambda_i}} dx_i, \quad  
\beta_i = \frac{1}{\sqrt{\lambda_i}} dy_i \text{ for } i=1,\cdots,n.
\edm
The only  non-vanishing commutators are  $[u_i, v_i] = \lambda_i \xi$, 
for all $i=1,\cdots, n$.
An easy but tedious computation shows that the Nijenhuis tensor $N$ of 
$\varphi$ vanishes. Moreover, $d\eta$ and the fundamental form 
$F(X,Y) = g(X,\varphi(Y))$ are expressed by
$$d\eta = - \sum_{i=1}^n \lambda_i \alpha_i \wedge \beta_i \quad \mbox{ and } \quad F = - \sum_{i=1}^n \alpha_i \wedge \beta_i.$$
Therefore $F$ is always closed and $H^{2n+1}$ is quasi-Sasaki for all 
parameters. Furthermore $H^{2n+1}$ is $\alpha$-Sasaki if and only if all 
parameters coincide and, in particular, Sasaki for $\lambda_1 , \, \ldots \,  
\lambda_n = 2$.
Using the first structure equation of Cartan, we can see that the 
non-vanishing Levi-Civita connection forms are given by
$$\omega^g_{u_i v_i} = -\frac{\lambda_i}{2} \eta,\quad \omega^g_{u_i\xi} 
= -\frac{\lambda_i}{2}\beta_i, \quad \omega^g_{v_i\xi} 
=\frac{\lambda_i}{2}\alpha_i.$$ 
Thus our Reeb field $\xi$ is a Killing vector field and since $N=0$, a 
theorem from \cite{FI02} guarantees the 
existence of the characteristic connection whith torsion $3$-form
$T= \eta \wedge d\eta$.
The  non-vanishing connection forms for $\nabla$ are then simply
$\omega_{u_iv_i} = - \lambda_i \eta\ ( i=1,\ldots,n)$. 
Using now the second structure equation of Cartan, we can calculate the 
curvature forms for the characteristic connection, 
$\Omega_{u_iv_i} = - \lambda_i \, d\eta$. Equivalently, the curvature tensor 
can be written as
%
$$
\kr = \sum_{i \leq j}^n \lambda_i \lambda_j (\alpha_i\wedge \beta_i) 
\odot (\alpha_j \wedge \beta_j).$$ 
Observe that the holonomy algebra of $\nabla$ is one-dimensional,
and therefore in particular abelian. We can readily 
check that $\nabla$ satisfies
$\nabla T = 0 = \nabla \kr$ and therefore we have determined an $n$-parameter 
family of naturally reductive homogeneous structures on the Heisenberg 
group of dimension $2n+1$. 
\begin{NB}
Heisenberg groups of dimension $2n+1\geq 5$ with their
naturally reductive structure 
defined above are the first known examples of manifolds with parallel 
skew torsion carrying a Killing spinor with torsion that do \emph{not}
admit a Riemannian Killing spinor (in fact, they do not even carry a
Riemannian Einstein metric, which would be a necessary requirement
for such a spinor) -- see \cite{ABK12} and \cite{Agricola&H14}.
This is a typical example of how naturally reductive spaces 
are used in differential geometry as a vast reservoir of examples.
\end{NB}
%
\subsection{The Lie group $S^3\x S^3$}\label{S3xS3}
%
In this section, we explain the naturally reductive structures on
$S^3 \x S^3$ with $\hol^\nabla=\so(3)$ (case D.2, Theorem \ref{thm.n=6-D.2-class}).
We realise $S^3 \x S^3$ as the homogeneous space $G/H$ where 
$G= \SU(2)\x \SU(2) \x SU(2)$ and $H= \SU(2)$ is embedded into $G$ 
diagonally, that is,
\bdm
H\ =\ \{(h, h, h): h \in \SU(2) \}.
\edm
Let $\g = \su(2)\oplus \su(2) \oplus \su(2)$ be the Lie algebra of $G$ and 
$\h = \su(2) = \{ (C, C, C): C \in \su(2) \}$ be the Lie algebra of $H$. 
Consider the following spaces
\bdm
\m_1 \,= \, \{ (A, aA, bA): a, b \in \R, A \in \su(2) \},\quad
\m_2 \,= \, \{ (B, cB, dB): c, d \in \R, B \in \su(2) \}
\edm
and let $\m$ be the direct sum of $\m_1$ and $\m_2$. Then $\m$ is a 
reductive complement of $\h$ inside $\g$ if and only if 
\bdm
\Delta := \det \left[ \begin{array}{ccc} 1 & 1 & 1 
\\ 1 & a & b \\ 1 & c & d \end{array}
 \right] = (a-1) (d-1) - (b-1) (c-1)\ \neq 0.
\edm
Let $K(X,Y) = -\frac{1}{2} \tr(XY)$ denote the (rescaled) Killing 
form on $\su(2)$  and define an inner product on 
$\m$, for each parameter $\lambda >0$, as
$$\langle (A_1+B_1, a A_1 + c B_1, b A_1 + d B_1), 
(A_2+B_2, a A_2 + c B_2, b A_2 + d B_2) \rangle
= K(A_1, A_2)+ \frac{1}{\lambda^2} K(B_1,B_2).$$ 
We define an almost complex structure $J$ on $\m$ by 
\bdm
J((A, a A, b A)+(B, c B, d B) ) = -\frac{1}{\lambda} (B, a B, b B) 
+ \lambda (A, c A, d A).
\edm
Let 
\bdm
Y_1\ =\ \begin{bmatrix}i & 0 \\ 0 & -i\end{bmatrix},\
Y_3\ =\ \begin{bmatrix}0 & -1 \\ 1 & 0\end{bmatrix},\
Y_5\ =\ \begin{bmatrix}0 & i \\ i & 0\end{bmatrix},\
\edm
be the standard basis of $\su(2)$. Recall that we 
have the following commutator relations
\bdm
[Y_1, Y_3] = - 2 Y_5, \quad [Y_1, Y_5] = 2 Y_3, \quad [Y_3, Y_5] = - 2 Y_1.
\edm
Take $h_i = (Y_i, Y_i, Y_i)$, $i=1,3,5$. Then $\{ h_1, h_3,h_5 \}$ is a basis 
of $\h$. Consider also the following orthonormal basis of $\m$
\bdm
e_i = (Y_i, a Y_i, b Y_i)\ \   \text{ and } \  
e_{i+1} = \lambda (Y_i, c Y_i,d Y_i)\ \ \text{ for }i=1,3,5.
\edm
Remark that $J$ is given, in this basis, by the $2$-form 
$\Omega = -(e_{12}+ e_{34} + e_{56})$. The isotropy representation is given by 
\bdm
\ad h_1\, =\, -2(E_{35}+E_{46})\, =:\, -2A_1 ,\ \ 
\ad h_3\, =\, 2(E_{15}+E_{26})\, =:\, 2A_3,\ \ 
\ad h_5\, =\, -2(E_{13}+E_{24})\, =:\, -2A_5.
\edm
The commutator structure is somewhat complicated. For ease of notation 
we introduce the following coefficients
\bdm
\begin{array}{ll}
\mu = -\frac{2}{\Delta} ((a^2-1)(d-1) - (b^2-1)(c-1)) \qquad 
& \nu = -\frac{2}{\Delta} (b-1)(a-1)(b-a) \\ 
\gamma = -\frac{2}{\Delta} ( a(d-b^2)+ a^2(b-d)+ (b^2-b) c) & \delta = -\frac{2}{\Delta} ( c (a (d-1) - bd +1) + (b-1) d) \\ 
\sigma = \frac{2}{\Delta} ( (a-1)(1-bd) + (ac-1)(b-1) )& \tau = \frac{2}{\Delta} ( ac (d-b) + cb (1-d) + ad (b-1) )\\
\xi = - \frac{2}{\Delta} (c-1)(d-1)(c-d) & \eta = -\frac{2}{\Delta} ((d^2-1) (a-1) - (c^2-1)(b-1))\\
\theta = -\frac{2}{\Delta} ( d^2(c-a) + c^2(b-d) + (da-cb)).
\end{array}
\edm
Then we can write the nonvanishing brackets as

\begin{center}
\begin{tabular}{ll}
$[ e_1, e_3] = \mu e_5 + \frac{\nu}{\lambda} e_6 + \gamma h_5, 
\quad $ 
& $[e_1, e_4] = [e_2, e_3] = \lambda \delta e_5 + \sigma e_6 + \lambda \tau h_5, \quad$ \\  
$[e_1, e_5] = -\mu e_3 - \frac{\nu}{\lambda} e_4 + \gamma h_3, 
\quad$ 
&
$[ e_1, e_6] = [e_2, e_5] = -\lambda \delta e_3 - \sigma e_4 - \lambda \tau h_3,  $ \\
$[e_2, e_4] = \lambda^2 \xi e_5 + \lambda \eta e_6 + \lambda^2 \theta h_5, 
\quad$ & $[e_2, e_6] = -\lambda^2 \xi e_3 - \lambda \eta e_4 
- \lambda^2 \theta h_3, $\\
$[ e_3, e_5] = \mu e_1 + \frac{\nu}{\lambda} e_2 + \gamma h_1, 
\quad $ &  $[e_3, e_6] = [e_4, e_5] \lambda \delta e_1 + \sigma e_2 
+ \lambda \tau h_1, \quad$\\
$[e_4, e_6] = \lambda^2 \xi e_1 + \lambda \eta e_2 
+ \lambda^2 \theta h_1. \quad$
\end{tabular}
\end{center}

The Nijenhuis tensor $N$ is totally skew-symmetric and given by 
\bdm
N= [\lambda^2 \xi + 2 \sigma - \mu] (e_{135}-e_{146}-e_{236}-e_{245}) 
+ \left[ \frac{\nu}{\lambda} + \lambda (2\delta - \eta) \right] 
(e_{246}-e_{136}-e_{145}-e_{235}).
\edm
We can also compute that
\bdm
d^c\Omega = -3 \lambda^2 \xi e_{135} - 3 \frac{\nu}{\lambda} e_{246} 
+ (2\sigma - \mu) (e_{146}+e_{245}+e_{236})+ \lambda (2 \delta - \eta) 
(e_{145}+e_{136}+e_{235}).
\edm
Therefore the torsion tensor $T = N+ d\Omega\circ J$ of the almost complex
structure is \cite[Thm 10.1]{FI02}
\bdm
T  =  [-2 \lambda^2 \xi + 2 \sigma - \mu] e_{135} 
+ \left[-2 \frac{\nu}{\lambda}+ \lambda (2 \delta - \eta)\right] 
e_{246}  - \lambda^2 \xi (e_{146}+ e_{236}+ e_{245}) 
- \frac{\nu}{\lambda} (e_{136}+ e_{145}+ e_{235}).
\edm
For all parameters, the Hermitian structure 
is of type $\mathcal{W}_1\oplus \mathcal{W}_3$.
Its characteristic connection $\nabla$ is given by the map 
$\Lambda: \m \lra \so(\m)$
\bdm
\Lambda(e_i)\, =\, (-\lambda^2 \xi + \sigma ) A_i \ 
\text{ and } \  \Lambda (e_{i+1})\, = \, 
\left( -\frac{\nu}{\lambda} + \lambda \delta \right) A_i
\text{ for } i=1,3,5.
\edm
It is then a straightforward computation to check that $\nabla T = 0$. 
As for the curvature tensor, we obtain 
\bdm
\kr\ = \  \Sigma\, [ A_1^2+ A_3^2+A_3^2],
\quad \text{where} \ \ \Sigma 
:= \frac{\nu^2}{\lambda^2} + \lambda^4 \xi^2 
- \lambda^2 \xi (2 \sigma - \mu)- \nu (2 \delta - \eta).
\edm
Thus, $\kr$ is a projection on  $\Lin(A_1,A_3,A_5)=\so(3)=\hol(\nabla)$, 
compare with equation (\ref{n=6-curvature}). This shows
(and one easily checks)  that $\nabla \kr = 0$. 
The constant $\Sigma$ also appears in $\sigma_T$,  for we have 
$\sigma_T = - \Sigma (*\Omega)$. Therefore $\kr = 0$ if and only if 
$\sigma_T = 0$.

In this description, some parameters play no significant geometrical role.
The torsion $T$ is  a linear combination of the 
four $\SO(3)$-invariant $3$-forms  in $\mathcal{W}_1\oplus\mathcal{W}_3$,
as given in the proof of Theorem \ref{Dn=6, classification}.
A suitable base change would make the last term  disappear, but
we can obtain the same result by choosing our parameters $a,b,c$, and $d$ 
so that the coefficient $\nu$ vanishes. The choice $a=b=1$ is not 
allowed, since it yields $\Delta=0$, but either $a=1$ or $b=1$ is
an admissible choice. So let us set $b=1$ ($\Delta\neq 0$ is then
equivalent to $a\neq 1$ and $d\neq 1$).  Most, but-alas-not all,
coefficients simplify considerable.

Let us identify a few particularly interesting choices of parameters.
First, there are several solutions for $\Sigma=0$, the
simplest being $c=1$ or $c=d$ (because both imply $\xi=0$).
Recall that if  $\alpha,\alpha',\beta$ denote the
coefficients of the torsion as in equation (\ref{n=6-curvature}),
we described in the proof of Theorem \ref{Dn=6, classification}
and Remark \ref{NB-curvature} three noteworthy  situations.
They all require $\alpha'=0$, which is equivalent to 
$2c=d+1$. The discussion of the additional conditions is
summarized in the following table:

\bdm
\ba{|l|l|l|}\hline
\text{Geometric description} & \text{condition} 
& \text{equivalent to}\\ \hline \hline
 \text{bi-inv. Riemannian Einstein metric}  & \alpha= +\beta 
& \lambda= \frac{2 |a-1|}{|d-1|}\\ \hline
 \text{pure type } 
\mathcal{W}_1 \text{(nearly K\"ahler)} & \alpha=-\beta 
& \lambda= \frac{2 |a-1|}{\sqrt{3}|d-1|}\\ \hline
 \text{pure type } 
\mathcal{W}_3  & \alpha=3\beta 
& \text{impossible}\\ \hline
\ea
\edm

\medskip
The next example has exactly the opposite behaviour:
the naturally reductive metrics on
$\SL(2,\C)$ are either of type $\mathcal{W}_3$ or 
$\mathcal{W}_1\oplus \mathcal{W}_3$, but never of pure type
$\mathcal{W}_1$.
%
\subsection{The Lie group $\SL(2,\C)$}\label{SL(2,C)}
%
The complex Lie group $\SL(2,\C)$ can be understood as a
real $6$-dimensional non compact manifold. Its standard
complex structure and Killing form can be deformed to yield an
almost Hermitian structure with parallel torsion of Gray-Hervella class
$\mathcal{W}_1\oplus \mathcal{W}_3$. Its $\mathcal{W}_3$ structure
was first discovered in \cite{Alexandrov&F&S05}, and
enlarged to $\mathcal{W}_1\oplus \mathcal{W}_3$ (albeit rather laconically) 
in \cite{Sch07}. Since this is a rather unusual
(and quite tricky) case of a naturally reductive space, we will
give a self-contained account of its geometry here. Recall that
\bdm
\sl(2,\C) \ = \ \Big\{A \in \mathcal{M}_2(\C) \, : \, \tr A =  0 \Big\}
\ =\ \su(2) \oplus i\,\su(2).
\edm
We realize $\SL(2,\C)$ as the quotient $G/H=\SL(2,\C)\x \SU(2)/\SU(2)$ with
$H=\SU(2)$ embedded diagonally and a reductive complement $\m_\alpha$ of $\h$
inside $\g=\sl(2,\C)\oplus \su(2)$ depending on a real parameter 
$\alpha\neq 1$, i.\,e.
\bdm
\h \ =\ \{ (B,B)\, :\, B\in\su(2)\},\quad
\m_\alpha \ :=\ \{(A+\alpha B,B)\, :\, A\in i\, \su(2),\ B\in \su(2)\}.
\edm
Observe that elements of the form $(A+\alpha_1 B, \alpha_2 B)$ would still
define a reductive complement of $\h$ (as long as $\alpha_1\neq \alpha_2$),
but $\alpha_2=0$ is uninteresting, since $\m$ would then be a subalgebra
of $\g$. Hence, our Ansatz for $\m_\alpha$ sets this second
constant equal to one. 
We choose as basis of  $\sl(2,\C)$ over the reals
$Y_1,\ Y_3,\ Y_5,\ 
Y_2 = i\, Y_1,\ Y_4= i\, Y_3,\ Y_6 = i\, Y_5$, where
the first three elements are defined as in the previous Example
\ref{S3xS3}.
Thus, the elements $h_i=(Y_i,Y_i), \ i=1,3,5$ form a basis of $\h$. The
Killing form $2 \, \beta(X,Y)=\Re\tr (XY)$ of $\sl(2,\C)$ is negative
definite on $\su(2)$, positive definite on $i\,\su(2)$ and these two
spaces are orthogonal. Therefore, the formula
\bdm
g_\lambda \big((A_1+\alpha B_1,B_1), (A_2 +\alpha B_2 ,B_2 ) \big)\ :=\
\beta(A_1, A_2) - \frac{1}{\lambda^2}\beta(B_1,B_2),\quad \lambda>0 
\edm
defines a one-parameter family of Riemannian metric on $\SL(2,\C)\cong G/H$.
We shall prove later that $G/H$ with such a  metric is in fact a naturally
reductive space for all $\lambda$. The elements 
\bdm
x_i\,=\, \lambda(\alpha\, Y_i,Y_i),\ i=1,3,5\  \text{ and }\
x_j\, =\, (Y_j,0),\ j=2,4,6  
\edm
form an orthonormal frame of $\m_\alpha$ with respect to
$g_\lambda$. With respect to this frame, the differential 
$\ad:\ \h\ra\so(\m_\alpha)$ of the isotropy representation
is (as in Example \ref{S3xS3}) given by
\bdm
\ad h_1\, =\, -2(E_{35}+E_{46})\, =:\, H_1 ,\ \ 
\ad h_3\, =\, 2(E_{15}+E_{26})\, =:\, H_3,\ \ 
\ad h_5\, =\, -2(E_{13}+E_{24})\, =:\, H_5.
\edm
The non-vanishing commutators of elements in $\m_\alpha$ are
\begin{alignat*}{2}
 [x_1, x_3] & = + 2\alpha\lambda^2 h_5-2\lambda(1+\alpha)x_5, & \qquad
[x_2,x_4]& =  + \frac{2}{1-\alpha} h_5-\frac{2}{\lambda(1-\alpha)}x_5\\
 [x_1, x_5] & = -2\alpha\lambda^2 h_3+2\lambda(1+\alpha)x_3, & \qquad
[x_2,x_6]& = - \frac{2}{1-\alpha} h_3 +\frac{2}{\lambda(1-\alpha)}x_3\\
 [x_3, x_5] & = + 2\alpha\lambda^2 h_1-2\lambda(1+\alpha)x_1, & \qquad
[x_4,x_6]& =  + \frac{2}{1-\alpha} h_1-\frac{2}{\lambda(1-\alpha)}x_1
\end{alignat*}
as well as
\bdm
[x_1,x_4]= [x_2,x_3]= -2\lambda\alpha x_6, \ \ 
[x_1,x_6]= [x_2,x_5]= 2\lambda\alpha x_4, \ \ 
[x_3,x_6]= [x_4,x_5]=-2\lambda\alpha x_2.
\edm
An almost Hermitian structure may be defined by the
K\"ahler form $\Omega :=x_{12}+ x_{34}+x_{56}$; its Nijenhuis tensor turns
out to be a $3$-form,
\be\label{SL2C.N}
N\ = \ 2\left[\lambda(1-\alpha) -\frac{1}{\lambda(1-\alpha)} \right]
[x_{135}-x_{146}-x_{236}-x_{245}].
\ee
By \cite[Thm 10.1]{FI02}, this almost complex structure
admits therefore a unique characteristic connection $\nabla$ with torsion
\bdm
T\ =\ N+d\Omega\circ J\ = \
 \left[2\lambda(1-\alpha) +\frac{4}{\lambda(1-\alpha)} \right]x_{135}
+\frac{2}{\lambda(1-\alpha)}[x_{146}+x_{236}+x_{245}].
\edm
This torsion is $\ad(\h)$-invariant for all parameter values, hence $\nabla T = 0$. 
One checks that this almost complex structure has no contribution
of Gray-Hervella type $\mathcal{W}_4$, and from formula 
($\ref{SL2C.N}$), one can
conclude that it is of type $\mathcal{W}_3$ if and only if 
$\lambda(1-\alpha)= \pm 1$.
The map $\Lambda: \ \m_\alpha\ra\so(\m_\alpha)$ 
characterizing $\nabla$ (see \cite[Ch.X]{Kobayashi&N2}) is given
by
\bdm
\Lambda(x_i )\, =\, \left[ \lambda\alpha -\frac{1}{\lambda(1-\alpha)}\right]
\,H_i\ \text{ for } i=1,3,5 \ \text{ and }\ 
\Lambda(x_j)\, =\, 0 \text{ for } j=2,4,6.
\edm
Let us give the expression for the curvature of $\nabla$:
\bdm
\kr\ =\ 4 \Big( 1 \, + \, \frac{1}{\lambda^2 (1-\alpha)^2}\Big) [ (x_{13}+x_{24})^2+(x_{15}+x_{26})^2+(x_{35}+x_{46})^2],
\edm
which, as should be expected, is nothing else than the projection
on the subalgebra $\so(3)\cong\su(2)$ generated by $H_1, H_3, H_5$, i.\,e.~the
holonomy algebra of $\nabla$.
Thus, the metric $g_\lambda$
is naturally reductive for all parameters.

\appendix\section{The Nomizu construction}\label{app.nomizu}
%
We give here an algebraic construction of an infinitesimal model
of a naturally reductive structure out of a given algebraic curvature 
and a skew torsion. This construction is known by the name
\emph{Nomizu construction} and is used in different places of the literature
(see for example \cite{Tricerri93} and \cite{Cleyton&S04}), but not in the
form that we need for our purpose.

\medskip
Let $\h$ be a real Lie algebra, $V$ a real finite-dimensional $\h$-module
with an $\h$-invariant positive definite scalar product $\langle , \rangle$,
i.\,e.~we assume that $\h\subset \so(V)\cong \Lambda^2 V$.
Let $\kr:\Lambda^2 V \ra\h$ be an $\h$-equivariant map and $T\in(\Lambda^3 V)^\h$
an $\h$-invariant $3$-form. The $4$-form $\sigma_T$ is defined as usual.
We would like to define a Lie algebra structure on $\g:=\h\oplus V$
by setting
\be\label{Nomizu}
[A+X,B+Y]\ :=\ ([A,B]_\h - \kr(X,Y))+ (AY-BX-T(X,Y)),\qquad A,B\in \h,\ X,Y\in V.
\ee
This amounts to finding necessary and sufficient conditions for the
Jacobi identity to hold in $\g$. The first result is classical, hence we omit 
the easy proof:
\begin{lem}\label{lem.nomizu}
The bracket defined by $(\ref{Nomizu})$ satisfies the 
Jacobi identity if and only if the following two conditions hold:
\begin{enumerate}
\item  $\cyclic{X,Y,Z}\kr(X,Y,Z,V)\ =\  \sigma_T(X,Y,Z,V)$. 
\item $\cyclic{X,Y,Z}\kr(T(X,Y),Z)\ =\ 0$. 
\end{enumerate}
\end{lem}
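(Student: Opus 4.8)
The plan is to verify the Jacobi identity directly on the bracket (\ref{Nomizu}), exploiting multilinearity and antisymmetry to reduce to four cases according to how many of the three arguments lie in $\h$ and how many in $V$. Since each of the four summands in (\ref{Nomizu}) is manifestly skew, the Jacobiator $J(U,V,W):=\cyclic{U,V,W}[[U,V],W]$ is a skew, trilinear function of $U,V,W\in\g$; hence it suffices to evaluate it on pure arguments $A,B,C\in\h$ and $X,Y,Z\in V$ in the four configurations $(\h,\h,\h)$, $(\h,\h,V)$, $(\h,V,V)$, and $(V,V,V)$.

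First I would dispatch the three ``cheap'' configurations, which hold independently of conditions $(1)$--$(2)$. The case $(\h,\h,\h)$ reduces immediately to the Jacobi identity in $\h$. The case $(\h,\h,V)$ collapses to $([A,B]_\h-AB+BA)X=0$, which is exactly the statement that $V$ is an $\h$-module with $\h\subset\so(V)$ acting by the commutator of operators. The case $(\h,V,V)$ splits into an $\h$-component and a $V$-component: the $\h$-component vanishes precisely by the $\h$-equivariance of $\kr$, written as $\kr(AX,Y)+\kr(X,AY)=[A,\kr(X,Y)]_\h$, while the $V$-component, after pairing with a test vector $W$ and using skew-symmetry of $A\in\so(V)$, becomes $(A\cdot T)(X,Y,W)=0$, i.e.~the $\h$-invariance of the $3$-form $T$. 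All three are part of the Nomizu data, so nothing new is needed here.

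The substance of the argument is the configuration $(V,V,V)$. Here I would expand $[[X,Y],Z]$ using $[X,Y]=-\kr(X,Y)-T(X,Y)$, carefully separating the $\h$-valued contribution $\kr(T(X,Y),Z)$ from the $V$-valued contribution $-\kr(X,Y)Z+T(T(X,Y),Z)$, and then form the cyclic sum. The $\h$-part of $J$ is then exactly $\cyclic{X,Y,Z}\kr(T(X,Y),Z)$, whose vanishing is condition $(2)$. For the $V$-part, pairing with an arbitrary $W\in V$ yields $\cyclic{X,Y,Z}\big[-\kr(X,Y,Z,W)+T(T(X,Y),Z,W)\big]$; the key step is to recognise, using total antisymmetry of the $3$-form $T$ to rewrite $T(T(X,Y),Z,W)=g(T(X,Y),T(Z,W))$, that the second cyclic sum is precisely $\sigma_T(X,Y,Z,W)$. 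Thus the $V$-part vanishes for all $W$ if and only if $\cyclic{X,Y,Z}\kr(X,Y,Z,W)=\sigma_T(X,Y,Z,W)$, which is condition $(1)$.

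Assembling the four cases, the Jacobiator vanishes identically iff both $(1)$ and $(2)$ hold, which is the assertion. The only delicate point -- the ``main obstacle'' -- is the bookkeeping in the $(V,V,V)$ case: correctly tracking which terms land in $\h$ versus $V$, and performing the contraction $T(T(X,Y),Z,W)=g(T(X,Y),T(Z,W))$ that matches the $V$-part cyclic sum to the definition of $\sigma_T$. Everything else is forced by the module structure and the equivariance/invariance hypotheses already built into the Nomizu data, which is why the easy proof can be omitted.
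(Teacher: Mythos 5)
Your proof is correct, and it is precisely the routine case-by-case verification of the Jacobi identity that the paper itself omits, stating only that ``the first result is classical, hence we omit the easy proof.'' All four configurations are handled correctly: the three ``cheap'' ones reduce to the module structure, the equivariance of $\kr$, and the invariance of $T$ exactly as you say, and in the $(V,V,V)$ case the separation into $\h$- and $V$-components together with the identity $T(T(X,Y),Z,W)=g(T(X,Y),T(Z,W))$ correctly produces condition $(2)$ and, via $\sigma_T$, condition $(1)$.
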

We recognize that these two conditions are precisely
the first and second Bianchi identity for a metric connection with
parallel torsion and curvature (eqs.~(\ref{Bianchi-I}) and 
(\ref{Bianchi-II}) in Section \ref{conn-with-skew-torsion}). 
Hence, we shall call conditions
(1) and (2) the \emph{first and second Bianchi conditions}.

We now give an interpretation of the first Bianchi condition in terms
of the Clifford algebra. We embed $T\in\Lambda^3 V$ and
$\kr\in\Lambda^2 V\ox\Lambda^2 V$ in the Clifford algebra 
$\Cl(V):=\Cl(V, - \langle ,\rangle)$
by replacing the tensor product and the exterior product by the
Clifford product. Similarly, we define $T^2$ as the composition of
the endomorphism $T$ of $\Cl(V)$ with itself.
Finally, we define the (algebraic) Ricci tensor associated with
$\kr$ as $\Ric(X,Y):=\sum_{i=1}^n \kr(X,e_i,e_i,Y)$, where $e_i$ denotes
an orthonormal frame of $V$. Recall that the curvature operator
$\kr$ is a symmetric endomorphism for a connection with parallel torsion,
so these are the algebraic curvature operators we're interested in.
\begin{thm}\label{Bianchi-Clifford}
If  $\kr:\ \Lambda^2 V \ra\h\subset \Lambda^2 V $ is symmetric,
the first Bianchi condition 
is equivalent to $T^2+\kr\in\R\subset\Cl(V)$, and the second
Bianchi condition holds automatically.
\end{thm}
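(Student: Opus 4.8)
The plan is to work entirely inside the Clifford algebra $\Cl(V)$ and translate both Bianchi conditions into statements about the degree-four (and lower) parts of the Clifford element $T^2+\kr$. The key structural fact I would exploit is that for a $3$-form $T$ and a symmetric curvature operator $\kr\in\Lambda^2 V\ox\Lambda^2 V$, the products $T^2$ and $\kr$ (viewed via Clifford multiplication) decompose into homogeneous components of Clifford degree $0,2,4$ (and in principle $6$, but these will be seen to cancel). The claim that $T^2+\kr\in\R$ is precisely the assertion that all components of positive degree vanish, so I would prove the equivalence degree by degree.

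First I would compute $T^2$ in $\Cl(V)$ using the standard expansion. The well-known identity (already invoked in the proof of Theorem~\ref{caseCDn=6}) gives
\[
T^2\ =\ -\,2\,\sigma_T\ +\ \|T\|^2,
\]
where the degree-four part is $-2\sigma_T$ and the degree-zero part is the scalar $\|T\|^2$; the key point is that the degree-six part vanishes identically for a $3$-form, and there is no degree-two part. Next I would expand $\kr$ as a Clifford element: since $\kr$ is symmetric, $\kr=\sum_k \omega_k\odot\omega_k$ (up to signs/scalars) with $\omega_k\in\Lambda^2 V$, and each symmetric square $\omega\odot\omega$ decomposes in $\Cl(V)$ into a degree-four piece $\omega\wedge\omega$ and a degree-two/zero remainder governed by the Ricci contraction. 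The crucial observation is that the degree-four part of $\kr$, as a $4$-form, is exactly the Bianchi-symmetrized tensor $\mathfrak{S}_{X,Y,Z}\kr(X,Y,Z,V)$, while its lower-degree parts are controlled by $\Ric$.

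With these two expansions in hand, the equivalence reduces to matching homogeneous components. The degree-four part of $T^2+\kr$ is $-2\sigma_T$ plus the $4$-form $\mathfrak{S}_{X,Y,Z}\kr(X,Y,Z,V)$; requiring this to vanish is exactly the first Bianchi condition~(1). Conversely, once~(1) holds, the degree-four part is zero by construction, and the degree-zero and degree-two parts are automatically scalar (indeed $\|T\|^2$ together with the trace contribution of $\kr$), so $T^2+\kr$ lands in $\R$. This handles the first equivalence in both directions simultaneously.

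For the second Bianchi condition, I would argue that it follows \emph{automatically} from symmetry of $\kr$ together with the fact that $\kr$ takes values in $\h\subset\Lambda^2 V$ and $T$ is $\h$-invariant. The expression $\mathfrak{S}_{X,Y,Z}\kr(T(X,Y),Z)$ is a contraction of the $\h$-equivariant map $\kr$ against the $\h$-invariant form $T$; invariance of $T$ under each $\kr(X,Y)\in\h$, combined with the symmetry $g(\kr(X,Y)V,W)=g(\kr(V,W)X,Y)$, forces the cyclic sum to collapse. The \textbf{main obstacle} I anticipate is precisely this last step: making the cancellation in the second Bianchi identity fully rigorous requires carefully converting the statement ``$T$ is $\kr$-invariant'' into the tensorial identity $\mathfrak{S}_{X,Y,Z}\kr(T(X,Y),Z)=0$, which means relating the action of a $2$-form on $T$ (equation~(\ref{eq.XT-auf-T})) to the contraction appearing here, and checking that the symmetry of $\kr$ is exactly what is needed for the three cyclic terms to sum to zero rather than to some nonzero multiple of $\sigma_T$.
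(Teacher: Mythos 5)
Your strategy is the same as the paper's: expand $T^2+\kr$ into homogeneous Clifford components, identify the vanishing of the degree-four part with the first Bianchi condition, and derive the second condition from symmetry of $\kr$ together with $\h$-invariance of $T$. For the first equivalence, however, the step you label ``the crucial observation'' is exactly the content that needs proof, and as you state it the normalization is off: the paper's index computation shows that the degree-four component of $\kr$ in $\Cl(V)$ is \emph{twice} the Bianchi symmetrization (the coefficient of $e_{1234}$ is $2(R_{1234}+R_{3124}+R_{2314})$, and the symmetry of $\kr$ is needed to combine the terms into this form). This factor of $2$ is precisely what makes the comparison with the degree-four part $-2\sigma_T$ of $T^2$ reproduce condition (1) on the nose; with your stated normalization, vanishing of the degree-four part would read $\cyclic{X,Y,Z}\kr(X,Y,Z,V)=2\sigma_T(X,Y,Z,V)$, which is not condition (1). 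Also, a degree-two component cannot be ``automatically scalar'' --- it has to vanish --- and its vanishing is not automatic: it equals the antisymmetric part of the algebraic Ricci tensor and vanishes precisely because $\kr$ is symmetric. These points are repairable, but they are where the hypothesis on $\kr$ actually does its work.

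The genuine gap is the step you yourself flag as the main obstacle: you never supply the argument that makes the second Bianchi condition automatic, and your concern that the cyclic sum could produce ``a nonzero multiple of $\sigma_T$'' shows the mechanism you have in mind --- equation (\ref{eq.XT-auf-T}) --- is the wrong one. That equation describes the action on $T$ of the $2$-forms $X\haken T$, which need not lie in $\h$ and which do act nontrivially on $T$; the correct argument never touches them. Since $\kr$ is symmetric with values in $\h$, it factors as $\kr=\psi\circ\pi_\h$ with $\pi_\h$ the orthogonal projection onto $\h$ and $\psi:\h\ra\h$ equivariant; hence it suffices to show that the $2$-form $\cyclic{X,Y,Z}\, T(X,Y)\wedge Z$ is orthogonal to $\h$. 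Pairing with an arbitrary $\alpha\in\h$ (with associated $2$-form $\tilde\alpha$) gives
\bdm
\cyclic{X,Y,Z}\,\langle \tilde\alpha,\ T(X,Y)\wedge Z\rangle\ =\ \pm\,\cyclic{X,Y,Z}\, T(X,Y,\alpha(Z)),
\edm
and the right-hand side is, up to sign, the value at $(X,Y,Z)$ of the natural action of $\alpha\in\so(V)$ on the $3$-form $T$, which vanishes because $T$ is $\h$-invariant by hypothesis. So the collapse you hoped for comes from pairing against $\h$ and invariance alone --- neither $\sigma_T$ nor equation (\ref{eq.XT-auf-T}) enters, and no relation between the three cyclic terms beyond skew-symmetry of $T$ is needed.
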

\begin{proof}
We express the curvature with respect to an orthonormal frame $e_1,\ldots,
e_n$ of $V$ as
\bdm
\kr\ =\ \frac{1}{4}\sum_{i,j,k,l} R_{ijkl}\, e_{ij}\ox e_{kl} \ 
\edm
A priori, $\kr$ has components of degree $0$, $2$, and $4$ in $\Cl(V)$.
The $0$-degree part vanishes, because it corresponds to summands with four
equal indices, and $R_{iiii}=0\  \forall \ 1\leq i\leq n$. The degree-$2$ part
comes from summands with exactly two equal indices that add up to the 
antisymmetric part of the algebraic Ricci tensor, which is zero for
a symmetric curvature operator (just as in the Riemannian case). Thus, 
the crucial part is to rewrite  the degree $4$-part $\kr_4$ of $\kr$
in an appropriate way. It corresponds to
summands with $i,j,k,l$ all different. To clarify  ideas, let us consider
the term $e_{1234}$. Rewrite
\bdm
\kr_4\ =\ \frac{1}{4}\sum_{i,j,k,l,\ 
 \text{all diff.}}  R_{ijkl}\, e_{ij}\ox e_{kl}
\ =\ \mu\, e_{1234}\,+ R ,
\edm
where $R$ are terms that are not proportional to  $e_{1234}$. There
is a contribution coming from all terms with $l=4$, namely,
\bea[*]
\lefteqn{\frac{1}{4}\left[R_{1234}e_{1234}+ R_{1324}e_{1324}+ R_{2134}e_{2134}
+ R_{2314}e_{2314}+ R_{3124}e_{3124}+R_{3214}e_{3214}\right]}\\
&=& \frac{1}{4}\left[R_{1234} - R_{1324}- R_{2134}
+ R_{2314} + R_{3124} - R_{3214}\right]\, e_{1234}\\
&=& \frac{1}{4}\left[2 R_{1234} 
+ R_{2314} + 2 R_{3124} - R_{3214}\right]\, e_{1234}.
\eea[*]
Similarly, one computes the contributions coming from terms
with $i,j,k=4$ and obtains
\bdm
\mu\ =\ 2(R_{1234}+ R_{3124}+ R_{2314}).
\edm
We emphasize that we did use the symmetry of the curvature
tensor in this computation. The torsion
can be written 
\bdm
T=\sum_{i<j< k} \ T_{ijk} e_{ijk},
\edm
and the standard 
identity $T^2 = -2\sigma_T+ \|T\|^2$ (\cite[Prop.3.1]{Agricola03}, 
\cite[Prop.A.1]{Agricola06})  means that the contribution in $T^2$ proportional 
to $e_{1234}$ has coefficient
\bdm
-2 \sum_{\gamma} (T_{12\gamma}T_{\gamma 34}+ T_{23\gamma} T_{\gamma 14}
+ T_{31\gamma}T_{\gamma 24}).
\edm
Finally, the first Bianchi condition states for $X=e_1,\ Y=e_2,\ Z=e_3,
V=e_4$:
\bdm
R_{1234}+ R_{3124}+R_{2314}\ =\ \sum_{\gamma} 
(T_{12\gamma}T_{\gamma 34}+ T_{23\gamma} T_{\gamma 14}
+ T_{31\gamma}T_{\gamma 24}).
\edm
Thus, we see that the term proportional to $e_{1234}$ in $T^2+\kr_4$
vanishes if ond only if the  first Bianchi condition holds.
We now prove that the second Bianchi condition is automatically
satisfied. We express $\kr$ through its action on $2$-forms, 
i.\,e.~$\kr(T(X,Y),Z) = \kr(T(X,Y)\wedge Z)$. 
Since $\kr:\ \Lambda^2 V\ra \h\subset \Lambda^2 V$ is symmetric, 
it can be written as the composition $\kr=\psi\circ\pi_\h$, where
$\pi_\h$ denotes the projection $\Lambda^2 V\ra \h$ 
(with the orthogonal complement of $\h$ inside $\so(V)$ as complementary space)
and $\psi:\ \h\ra\h$ is some $\h$-equivariant endomorphism. Hence it 
suffices to show that
\bdm
\pi_\h(\cyclic{X,Y,Z}T(X,Y)\wedge Z)\ =\ 0,
\edm
or, equivalently, that  $\cyclic{X,Y,Z}T(X,Y)\wedge Z$ is orthogonal to $\h$.
Let $\alpha$ be an element of $\h$, viewed as a skew-symmetric endomorphism 
in $\so(V)$, and $\tilde\alpha$ the corresponding $2$-form.
By definition, these satisfy
\bdm
\langle \alpha(X), Y\rangle\ =\  \tilde\alpha (X,Y)\ =\ 
\langle \tilde\alpha, X\wedge Y\rangle.
\edm
Thus, the scalar product of this cyclic sum with $\alpha$ may
be rewritten
\bdm
\cyclic{X,Y,Z} \langle \tilde \alpha, T(X,Y)\wedge Z\rangle \ = \
-  \cyclic{X,Y,Z} \langle \alpha(Z), T(X,Y)\rangle\ =\ 
\cyclic{X,Y,Z} T(X,Y,\alpha(Z)).
\edm
But the vanishing of this sum is precisely the $\h$-invariance of
$T$, which we had assumed from the very beginning.
\end{proof}
\begin{NB}
This result exists in the literature in various formulations.
It is based on an algebraic identity in the Clifford algebra
that was first observed by B.~Kostant in \cite{Kostant99} and that
is the crucial step in a formula of Parthasarathy type for
the square of the Dirac operator. The link to naturally
reductive homogeneous spaces and connections with skew torsion was 
established in the first author's work 
\cite{Agricola03} and is explained in detail in the
survey \cite{Agricola06}. As formulated here, the result was 
previously used by the
last author and N.~Schoemann in  \cite{Friedrich07} and \cite{Sch07},
but without a clear statement nor a proof. 
\end{NB}
%
%
\section{Skew holonomy systems}\label{app.SHS}
%
Our characterisation
of irreducible manifolds with vanishing $\sigma_T$ as Lie groups
(Theorem \ref{thm.sigmaT-vanishes}) relies on
the concept of skew holonomy system, that is very much inspired
by Simons' geometric approach to the proof of Berger's holonomy theorem 
\cite{Simons62}. In our exposition, we follow (mostly) the approach and
notation from \cite{OR12a}; similar results were proved independently in
\cite{Nagy13}. Partial results may already be found in \cite{Agricola&F04a}.
\begin{dfn}[{\cite{OR12a}}]\label{dfn.SHS}
Let $G\subset \SO(n)$ be a connected Lie subgroup, $V=\R^n$ the
corresponding
$G$-module, and $T\in\Lambda^3 (V)$ a $3$-form such that 
$X\haken T\in\g\subset\so(V)$ for all $X\in V$. Such a triple $(G,V,T)$ is
called a \emph{skew-torsion holonomy system}. A skew torsion holonomy system
is said to be
\emph{irreducible} if $G$ acts irreducibly on $V$,
\emph{transitive} if $G$ acts transitively on the unit sphere of $V$,
and \emph{symmetric} if $T$ is $G$-invariant.
\end{dfn}
%
%
\begin{thm}[{Skew Holonomy Theorem \cite[Thm 1.4, Thm 4.1]{OR12a}, \cite{Nagy13}}]%
\label{thm.SHS}
Let  $(G,V,T), \ T\neq 0$ be an irreducible skew-torsion holonomy system.
If it is transitive, $G=\SO(n)$. If it is not
transitive, it is symmetric, and
\begin{enumerate}
\item $V$ is an orthogonal simple Lie algebra of rank at least two
with respect to the bracket $[X,Y]=T(X,Y)$,
\item $G=\Ad(H)$, where $H$ is the connectec Lie group whose Lie algebra
is $(V, [\cdot,\cdot])$,
\item $T$ is unique, up to a scalar multiple. 
\end{enumerate}
\end{thm}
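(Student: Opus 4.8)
The plan is to build the proof around the single dichotomy ``transitive or symmetric'', establish the structural conclusions in each branch, and isolate the Simons-type dichotomy as the technical heart. Throughout I identify the $2$-form $X\haken T$ with the endomorphism $Y\mapsto T(X,Y)^{\sharp}$; this is the adjoint operator $\ad(X)$ of the bracket $[X,Y]:=T(X,Y)^{\sharp}$, so that $\g_T:=\Lin\{X\haken T\}=\ad(V)$ is automatically a subalgebra of $\g$. I also record that a skew endomorphism $A$ annihilates $T$ exactly when it is a derivation of this bracket, i.e.\ $\iso(T)=\Der(V,[\cdot,\cdot])$.

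Assume first the system is \emph{symmetric}, so $T$ is $G$-invariant. Then each $X\haken T\in\g$ annihilates $T$, and \eqref{eq.XT-auf-T} forces $\sigma_T(\cdot,\cdot,\cdot,X)=(X\haken T)(T)=0$ for all $X$, hence $\sigma_T=0$. Since the Jacobiator of $[X,Y]:=T(X,Y)^{\sharp}$ equals $\sigma_T$ (equivalently, apply Lemma~\ref{lem.nomizu} with $\h=0$ and $\kr=0$), the form $T$ turns $V$ into a Lie algebra, and total antisymmetry of $T$ makes $\langle\cdot,\cdot\rangle$ an $\ad$-invariant inner product, so $(V,[\cdot,\cdot])$ is a metric Lie algebra of compact type. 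Its center equals $\ker T$, which is $G$-invariant and cannot be all of $V$ (that would give $T=0$); by irreducibility it is $0$, so $V$ is semisimple. Now $\g\subseteq\iso(T)=\Der(V)=\ad(V)=\g_T$, because all derivations of a semisimple Lie algebra are inner; since $\ad(V)$ preserves each simple ideal, these ideals are $\g$-submodules, and irreducibility of $G$ forces $V$ to be simple. This proves (1), and the equality $\g=\ad(V)=\g_T$ is exactly $G=\Ad(H)$, proving (2). Rank at least two follows because the only rank-one case, $\su(2)\cong\R^3$, has adjoint group $\SO(3)$ acting transitively on $S^2$ and therefore belongs to the other branch. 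Finally $T\in(\Lambda^3 V)^{G}=(\Lambda^3\g)^{\g}\cong H^3(\g;\R)$, which is one-dimensional for simple $\g$ (spanned by the Cartan form $\langle X,[Y,Z]\rangle$), giving uniqueness up to a scalar and proving (3).

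For the transitive branch I would invoke the classification of connected subgroups of $\SO(n)$ acting transitively on the unit sphere (Montgomery--Samelson, Borel): apart from $\SO(n)$ these are $\U(m),\SU(m),\Sp(m),\Sp(m)\cdot\U(1),\Sp(m)\cdot\Sp(1)$ and the exceptional $G_2\subset\SO(7)$, $\Spin(7)\subset\SO(8)$, $\Spin(9)\subset\SO(16)$. The map $X\mapsto X\haken T$ is $G$-equivariant from $V$ into $\g\subsetneq\so(n)$, so its image is a $G$-submodule of $\g$; comparing $V$ against the isotypical components of $\g$ inside $\Lambda^2 V$ rules out every proper candidate when $T\neq0$. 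For instance, for $G_2$ the image of $X\mapsto X\haken\varphi$ is the $7$-dimensional complement of $\g_2$ in $\Lambda^2\R^7$, never $\g_2$ itself, so no compatible nonzero $3$-form exists. Hence transitivity leaves only $G=\SO(n)$.

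The main obstacle is the dichotomy itself: an irreducible, non-transitive skew-torsion holonomy system is necessarily symmetric. Here I would transplant Simons' strategy for ordinary holonomy systems to the torsion setting, as in \cite{OR12a} and \cite{Nagy13}. The idea is to exploit non-transitivity as rigidity: for a unit vector $v$ the orbit $G\cdot v$ is a proper homogeneous submanifold of the sphere with tangent space $\g\cdot v\subsetneq v^{\perp}$, and one analyzes the action of the isotropy subalgebra $\g_v$ together with the normal holonomy along the orbit. The goal is to show that the $G$-orbit of $T$ inside the module $\{S\in\Lambda^3 V:\,X\haken S\in\g\ \forall X\}$ reduces to a single point, i.e.\ that $\g_v$ fixes $T$ for generic $v$, which then propagates to full $G$-invariance. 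This is precisely where the full force of the normal-holonomy machinery is required and where the argument is genuinely hard; I expect the crux to be ruling out a nontrivial isotropy action on $T$ without contradicting either irreducibility or the defining condition $X\haken T\in\g$.
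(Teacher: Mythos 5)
First, a point of comparison: the paper does \emph{not} prove Theorem~\ref{thm.SHS}. It is imported verbatim from \cite{OR12a} and \cite{Nagy13}, and Appendix~\ref{app.SHS} only records the statement for use in Theorem~\ref{thm.sigmaT-vanishes} and Theorem~\ref{thm.uniq-OR}. So the comparison is necessarily with the cited external proofs, not with anything internal. Within your attempt, the symmetric branch is correct and complete (given the dichotomy): invariance of $T$ kills $\sigma_T$ via (\ref{eq.XT-auf-T}), the Nomizu Lemma~\ref{lem.nomizu} with $\h=0$, $\kr=0$ then makes $[X,Y]:=T(X,Y)$ a Lie bracket with $\ad$-invariant inner product, $\ker T$ is the center and vanishes by irreducibility, the identification $\iso(T)=\Der(V,[\cdot,\cdot])=\ad(V)$ squeezes $\g_T\subseteq\g\subseteq\iso(T)=\g_T$ to give $G=\Ad(H)$ and simplicity, and $(\Lambda^3\g)^{\g}\cong H^3(\g;\R)\cong\R$ gives uniqueness. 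This matches the easy half of the argument in \cite{OR12a}.

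There are, however, two genuine gaps. First, in the transitive branch your key step fails as stated: the map $X\mapsto X\haken T$ is $G$-equivariant \emph{only if} $T$ is $G$-invariant, which is precisely what you cannot assume in the non-symmetric branch; hence ``its image is a $G$-submodule of $\g$'' is unjustified, and your $G_2$ computation only treats $T$ proportional to the fundamental form $\varphi$ rather than an arbitrary element of $\Lambda^3\R^7$. The repair is to observe that the full space $\{S\in\Lambda^3V : X\haken S\in\g \mbox{ for all } X\in V\}$ \emph{is} a $G$-submodule of $\Lambda^3 V$ (because $\g$ is $\Ad$-invariant under $G$), and then to check irreducible summand by irreducible summand for each group on the Montgomery--Samelson--Borel list; this is routine but must actually be done. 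Second, and more seriously, the heart of the theorem --- that an irreducible, non-transitive system is automatically \emph{symmetric} --- is exactly the hard normal-holonomy argument of \cite{OR12a} (alternatively the prolongation computation of \cite{Nagy13}), and your proposal explicitly defers it (``I would transplant Simons' strategy\ldots''). So what you have is a correct derivation of conclusions $(1)$--$(3)$ \emph{from} the dichotomy, plus a repairable sketch of the transitive case, but not a proof of the theorem itself; the statement you leave open is not a technical lemma but the theorem's actual content.
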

We give here one further important application of this result.
It is well-known that some manifolds carry several
connections making it naturally reductive  or, equivalently, they can be 
presented differently
as naturally reductive quotients of groups. The easiest examples are probably
the odd-dimensional spheres $S^{2n+1}= \SO(2n+2)/\SO(2n+1)= \SU(n+1)/\SU(n)$,
whose first presentation is that of a symmetric space
(the canonical connection coincides then with the Levi-Civita-connection),
the second presentation is that as a `Berger sphere' (see 
Section \ref{berger}). Furthermore, there are exceptional examples like
\bdm
S^6\ =\ G_2/\SU(3),\quad
S^7\ =\  \Spin(7)/G_2,\quad 
S^{15}=\Spin(9)/\Spin(7).
\edm
These presentations are far from accidental, they all induce interesting
$G$-structures and play a crucial role in the investigation of manifolds with
characteristic connections; $S^6$ is a nearly K\"ahler manifold and appears 
in our Classification Theorem \ref{Dn=6, classification}, see also
Remark \ref{NB-nK}. A consequence of the Skew Holonomy Theorem is
that spheres and projective spaces are basically the only manifolds 
for which this effect can
happen. 
\begin{thm}[{\cite[Thm 1.2]{OR12a}, \cite[Thm 2.1]{OR12b}}]
\label{thm.uniq-OR}
Let $(M^n,g)$ be a simply connected and irreducible Riemannian manifold
that is not isometric to a sphere, nor to a Lie group with a bi-invariant
metric or its symmetric dual. Then  $(M^n,g)$ admits at most 
\underline{one} naturally
reductive homogeneous structure.
\end{thm}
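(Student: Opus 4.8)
The plan is to derive uniqueness from the Skew Holonomy Theorem (Theorem \ref{thm.SHS}). Since a naturally reductive structure is a metric connection $\nabla$ with skew torsion obeying $\nabla T=\nabla\kr=0$, and since $\nabla=\nabla^g+\tfrac12 T$ is reconstructed from its torsion, the statement is equivalent to the uniqueness of such a torsion $3$-form. I would therefore assume, for contradiction, that $(M^n,g)$ carries two naturally reductive canonical connections $\nabla_1\neq\nabla_2$, with torsions $T_1,T_2$, and set $T:=T_1-T_2\neq0$. Everything is tested at a fixed point $p$, with $V:=T_pM^n$ and $R^g$ the Riemannian curvature at $p$; recall that $\hol^g$ acts irreducibly on $V$ because $M^n$ is irreducible.

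The decisive observation is that the \emph{Riemannian} curvature is parallel for each canonical connection. Indeed, $R^g$ differs from $\kr_i:=\kr^{\nabla_i}$ by a term that is algebraically quadratic in $T_i$; as both $\kr_i$ and $T_i$ are $\nabla_i$-parallel, so is $R^g$, i.e.\ $\nabla_i R^g=0$ for $i=1,2$ (this is just the Ambrose--Singer parallelism). Subtracting the two identities, the difference tensor $D:=\nabla_1-\nabla_2$ satisfies $D_X\cdot R^g=0$ for all $X\in V$, where $D_X=\tfrac12\,(X\haken T)\in\so(V)$ acts as a derivation. Writing $\mathfrak{h}_0:=\{A\in\so(V):A\cdot R^g=0\}$ for the Lie algebra of the stabiliser of $R^g$, this reads $X\haken T\in\mathfrak{h}_0$ for every $X$. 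Since holonomy always preserves the curvature, $\hol^g\subset\mathfrak{h}_0$, and therefore $\mathfrak{h}_0$ acts irreducibly on $V$. Consequently $(\,\mathrm{Stab}(R^g)^0,V,T\,)$ is an \emph{irreducible} skew-torsion holonomy system in the sense of Definition \ref{dfn.SHS}, with $T\neq0$.

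Now I would invoke Theorem \ref{thm.SHS}. If the system is transitive, then $\mathfrak{h}_0=\so(n)$; but $\mathfrak{h}_0=\so(n)$ means that $R^g$ is fixed by all of $\so(V)$, hence is a constant-curvature curvature tensor, so $M^n$ is a space of constant curvature -- a sphere (or its noncompact dual), which is excluded. If the system is not transitive, it is symmetric: $T$ is $\mathfrak{h}_0$-invariant, $V$ carries the structure of a simple orthogonal Lie algebra with $T(X,Y)=c\,[X,Y]$, and $\mathfrak{h}_0=\Ad(H)$. In particular $R^g$ is $\Ad(H)$-invariant, which forces $R^g$ to be proportional to the curvature tensor of the bi-invariant metric on $H$. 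Because $M^n$ is complete and simply connected, this identifies $M^n$ with the compact Lie group $H$ carrying its bi-invariant metric, or with its noncompact symmetric dual -- again excluded. In both cases the assumption $T_1\neq T_2$ collapses, which proves the theorem.

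The main obstacle is the symmetric branch, specifically the passage from the \emph{pointwise} conclusion (that $V$ is a simple Lie algebra and that $R^g$ is the bi-invariant curvature up to a constant) to the \emph{global} isometric identification of $M^n$ with $H$ or its dual; this globalisation rests on completeness, simple connectivity, and the rigidity of $\Ad(H)$-invariant curvature tensors (cf.\ the Cartan--Schouten picture in \cite{Agricola&F10} and the reconstruction of a symmetric space from its isotropy used in the proof of Theorem \ref{thm.sigmaT-vanishes}). A second, lighter point is the careful treatment of the transitive branch and of low dimensions, where the list of constant-curvature or simple-Lie-algebra models is short and is in any case covered by the explicit classifications of this paper; the full quantitative argument is carried out in \cite{OR12a}.
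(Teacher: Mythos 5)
First, a point of reference: the paper does \emph{not} prove this statement at all --- Theorem \ref{thm.uniq-OR} is quoted from Olmos and Reggiani \cite{OR12a}, \cite{OR12b} in Appendix B --- so your proposal must be judged on its own merits rather than against a proof in the text. Your overall strategy (set $\tau=T_1-T_2$, observe that both canonical connections parallelize $R^g$, deduce $X\haken\tau\in\mathrm{Stab}(R^g_p)$ for all $X$, and feed this into the Skew Holonomy Theorem) is indeed the strategy of the cited papers. But there is a genuine gap at the decisive step, the irreducibility of your skew-torsion holonomy system. You claim $\hol^g\subset\mathfrak{h}_0:=\{A\in\so(V):A\cdot R^g_p=0\}$ ``since holonomy always preserves the curvature''. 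This is false: by the holonomy principle, $R^g_p$ is fixed by the Riemannian holonomy group if and only if $R^g$ is $\nabla^g$-parallel, i.e.\ if and only if $M$ is locally symmetric. (Ambrose--Singer gives $R^g(X,Y)\in\hol^g$, a different and much weaker statement.) What is actually true is $\hol^{\nabla_1},\hol^{\nabla_2}\subset\mathfrak{h}_0$, since $R^g$ is parallel for both canonical connections; but these holonomy algebras can be very small and act reducibly even when $(M,g)$ is irreducible --- for the naturally reductive Heisenberg groups of Section \ref{odd-Heisenberg}, $\hol^\nabla$ is one-dimensional, and $\mathrm{Stab}(R^g_p)$ itself is reducible because the curvature singles out the central direction. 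Since Theorem \ref{thm.SHS} has irreducibility as a hypothesis, your appeal to it is not licensed, and the reducible case is precisely where the substance of the Olmos--Reggiani proof lies (a decomposition theory for non-irreducible skew-torsion holonomy systems together with splitting arguments). Note that you cannot fall back on Theorem \ref{cor.splitting-g-T} either, because $\tau$ is not known to be parallel for \emph{any} connection.

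Two smaller defects. In the transitive branch, constant curvature is not an immediate contradiction: the hypotheses exclude the sphere and the symmetric duals of compact Lie groups with bi-invariant metric, but \emph{not} the hyperbolic spaces $\R H^n$, $n\neq 3$, so your phrase ``(or its noncompact dual), which is excluded'' misreads the statement, and negative constant curvature must be ruled out by a separate argument. In the symmetric branch, $\Ad$-invariance of $R^g_p$ does not force it to be proportional to the bi-invariant curvature tensor: the constant-curvature tensor is $\SO(V)$-invariant, hence also $\Ad$-invariant, so the space of invariant algebraic curvature tensors is at least two-dimensional, and identifying $M$ with $H$ or its dual is a genuine further step --- this is what \cite{OR12a} actually carries out, not the routine globalisation your closing paragraph suggests.
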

%
%
    
\vspace{2cm}
\end{document}